\newtheorem{theorem}{Theorem}[section]
\newtheorem{definition}[theorem]{Definition}
\newtheorem{lemma}[theorem]{Lemma}
\newtheorem{corollary}[theorem]{Corollary}
\newtheorem{proposition}[theorem]{Proposition}
\newtheorem{algorithm}[theorem]{Algorithm}
\numberwithin{equation}{section}
\newcommand{\BB}{\mathbb{B}}
\newcommand{\BD}{\mathbb{D}}
\newcommand{\BE}{\mathbb{E}}
\newcommand{\BH}{\mathbb{H}}
\newcommand{\BX}{\mathbb{X}}
\newcommand{\BY}{\mathbb{Y}}
\newcommand{\BZ}{\mathbb{Z}}
\newcommand{\BS}{\mathbb{S}}
\newcommand{\BT}{\mathbb{T}}
\newcommand{\BN}{\mathbb{N}}
\newcommand{\BI}{\mathbb{I}}
\newcommand{\I}{\mathbb{I}}
\newcommand{\R}{\mathbb{R}}
\newcommand{\X}{\mathcal{X}}
\newcommand{\Y}{\mathcal{Y}}
\newcommand{\Z}{\mathcal{Z}}
\newcommand{\TT}{\mathcal{T}}
\newcommand{\U}{\mathcal{U}}
\newcommand{\V}{\mathcal{V}}
\newcommand{\ba}{\boldsymbol{a}}
\newcommand{\e}{\boldsymbol{e}}
\newcommand{\bx}{\boldsymbol{x}}
\newcommand{\by}{\boldsymbol{y}}
\newcommand{\bz}{\boldsymbol{z}}
\newcommand{\bu}{\boldsymbol{u}}
\newcommand{\bv}{\boldsymbol{v}}
\newcommand{\T}{\textnormal{T}}
\newcommand{\F}{\textnormal{F}}
\newcommand{\Prob}{\textnormal{Prob}\,}
\newcommand{\sign}{\textnormal{sign}\,}
\newcommand{\ex}{{\bf\sf E}}
\begin{document}

\title{Approximating Tensor Norms via Sphere Covering: Bridging the Gap Between Primal and Dual}

\author{
Simai HE
\thanks{Research Institute for Interdisciplinary Sciences, School of Information Management and Engineering, Shanghai  University of Finance and Economics, Shanghai 200433, China. Email: simaihe@mail.shufe.edu.cn}
    \and
Haodong HU
\thanks{Department of Computer Science and Technology, School of Information Management and Engineering, Shanghai University of Finance and Economics, Shanghai 200433, China. Email: hu.haodong@shufe.edu.cn}
    \and
Bo JIANG
\thanks{Research Institute for Interdisciplinary Sciences, School of Information Management and Engineering, Shanghai  University of Finance and Economics, Shanghai 200433, China. Email: isyebojiang@gmail.com}
    \and
Zhening LI
\thanks{School of Mathematics and Physics, University of Portsmouth, Portsmouth PO1 3HF, United Kingdom. Email: zheningli@gmail.com}
}

\date{\today}

\maketitle

\begin{abstract}

The matrix spectral and nuclear norms appear in enormous applications. The generalizations of these norms to higher-order tensors is becoming increasingly important but unfortunately they are NP-hard to compute or even approximate. Although the two norms are dual to each other, the best known approximation bound achieved by polynomial-time algorithms for the tensor nuclear norm is worse than that for the tensor spectral norm. In this paper, we bridge this gap by proposing deterministic algorithms with the best bound for both tensor norms.
Our methods not only improve the approximation bound for the nuclear norm, but are also data independent and easily implementable comparing to existing approximation methods for the tensor spectral norm.
The main idea is to construct a selection of unit vectors that can approximately represent the unit sphere, in other words, a collection of spherical caps to cover the sphere. For this purpose, we explicitly construct several collections of spherical caps for sphere covering with adjustable parameters for different levels of approximations and cardinalities. These readily available constructions are of independent interest as they provide a powerful tool for various decision making problems on spheres and related problems. We believe the ideas of constructions and the applications to approximate tensor norms can be useful to tackle optimization problems over other sets such as the binary hypercube.

\vspace{0.25cm}

\noindent {\bf Keywords:} spectral norm, nuclear norm, sphere covering, spherical caps, polynomial optimization, approximation algorithm, approximation bound

\vspace{0.25cm}

\noindent {\bf Mathematics Subject Classification:}
15A60, 
52C17, 
90C59, 
68Q17 
\end{abstract}

\section{Introduction}\label{sec:introduction}

With the advances in data collection and storage capabilities, massive multidimensional and multiway tensor data are being generated in a wide range of emerging applications~\cite{KB09}. Tensor computations and optimizations have been an active research area in the recent decade. Computing tensor norms are evidently essential in modelling various tensor optimization problems. One typical example is tensor completion (see e.g.,~\cite{YZ16}) in which the tensor nuclear norm is commonly used as the {\color{black}convex surrogate} of the tensor rank. However, most tensor norms are NP-hard to compute~\cite{HL13}, such as the spectral norm~\cite{HLZ10} and the nuclear norm~\cite{FL18} when the order of a tensor is more than two, a sharp contrast to matrices (tensors of order two) whose spectral and nuclear norms are easy to compute, e.g., using singular value decompositions.

The tensor spectral norm~\cite{L05} is commonly known as the maximization of a multilinear form over Cartesian products of unit spheres, a standard higher-order generalization of the matrix spectral norm. Taking a tensor $\TT=(t_{ijk})\in\R^{n\times n\times n}$ of order three as an example, its spectral norm
\begin{equation}\label{eq:3tnorm}
\|\TT\|_{\sigma}= \max \left\{\TT(\bx,\by,\bz): \|\bx\|_2=\|\by\|_2=\|\bz\|_2=1,\,\bx,\by,\bz\in\R^n\right\},
\end{equation}
where
$
\TT(\bx,\by,\bz) = \sum_{i=1}^{n}\sum_{j=1}^{n}\sum_{k=1}^{n} t_{ijk} x_iy_jz_k
$
is a trilinear form of $(\bx,\by,\bz)$. This is equivalent to the best rank-one approximation of the tensor $\TT$ in the tensor community
\[\min\left\{\| \TT-\lambda\,\bx\otimes \by \otimes \bz \|_{\F}: \lambda\in\R, \|\bx\|_2=\|\by\|_2=\|\bz\|_2=1,\,\bx,\by,\bz\in\R^n\right\},
\]
where $\|\bullet\|_{\F}$ stands for the Frobenius norm and $\otimes$ stands for the vector outer product, meaning that $\bx\otimes \by \otimes \bz$ is a rank-one tensor.

Although the tensor spectral norm is NP-hard to compute, it is easy to obtain feasible solutions of~\eqref{eq:3tnorm} to approximate this norm. There have been a lot of research works~\cite{S11,ZQY12,LHZ12,HJLZ14,HS14} on approximation algorithms of~\eqref{eq:3tnorm} in the optimization community since the seminal work of He et al.~\cite{HLZ10}. The best known worst-case bound to approximate~\eqref{eq:3tnorm} in polynomial time is $\Omega\left({\sqrt\frac{\ln n}{n}}\right)$~\cite{S11,HJLZ14}. One simple approach for this bound is a naive randomized algorithm in~\cite{HJLZ14}:
\begin{enumerate}
  \item Sample a vector $\bv$ uniformly on the sphere\footnote{\color{black}The $n$ in $\BS^n$ refers to the dimension of the space in which this sphere of dimension $n-1$ lives.} $\BS^n:=\{\bx\in\R^n:\|\bx\|_2=1\}$ and compute the spectral norm of the resulting matrix, i.e., $\max_{\|\bx\|_2=\|\by\|_2=1}\TT(\bx,\by,\bv)$;
  \item Repeat the above procedure independently until the largest objective value from all samples hits the desired bound.
\end{enumerate}
If we were able to sample all vectors in the unit sphere for $\bz$, then this approach certainly finds $\max_{\|\bx\|_2=\|\by\|_2=\|\bz\|_2=1}\TT(\bx,\by,\bz)$. It is obviously not possible to cover the unit sphere by enumerating unit vectors. However, if we are allowed some tolerance, say an approximation ratio $\tau\in(0,1]$, then a sample unit vector $\bv$ becomes a spherical cap
\[\BB^n(\bv,\tau):=\left\{\bx\in\BS^n:\bx^{\T}\bv\ge\tau\right\}\]
with the angular radius $\theta=\arccos\tau$. In this setting, $\bv$ is able to generate a $\tau$-approximate solution if and only if the spherical cap $\BB^n(\bv,\tau)$ includes an optimal $\bz$ in~\eqref{eq:3tnorm}. Alternatively, if we have a collection of sample unit vectors whose corresponding spherical caps joining together covers the whole sphere, then the best one in this collection can generate a $\tau$-approximate solution. In fact, the above algorithm does imply a randomized cover of the sphere whose covering volume is at least $1-\epsilon$ for any $\epsilon>0$ with high probability. However, this is much weaker than what we need here and {\color{black} even cannot guarantee the existence of a full cover}. One of the major contributions in this paper is to find a reasonable number of spherical caps to cover the sphere, deterministically and explicitly. 

{\color{black}There are certainly lots of decision problems over spheres.} Among them many are hard problems that approximate solutions are commonly acceptable such as wireless communications~\cite{VH20} and spherical facility location~\cite{X95}. There are even harder problems {\color{black}where} sphere covering seems irrelevant but it can be indeed helpful. One of these problems is {\color{black}computing the tensor nuclear norm.}
Taking $\TT\in\R^{n\times n\times n}$ again as an example, its nuclear norm is 
\begin{equation}\label{eq:3nnorm}
\|\TT\|_*=\min\left\{\sum_{i=1}^r|\lambda_i| : \TT=\sum_{i=1}^r\lambda_i\, \bx_i\otimes\by_i\otimes\bz_i,\,\lambda_i\in\R,\, \|\bx_i\|_2=\|\by_i\|_2=\|\bz_i\|_2=1
,\, r\in\BN \right\}.
\end{equation}
{\color{black}The decomposition of $\TT$ into rank-one tensors in~\eqref{eq:3nnorm} is known as a CANDECOMP/PARAFAC (CP) decomposition~\cite{H27}}. {\color{black}While CP decompositions usually require the number of rank-one terms to be minimum, there is no such constraint in~\eqref{eq:3nnorm}.} In fact, the tensor nuclear norm and spectral norm are dual to each other (see e.g.,~\cite{LC14}), i.e.,
\[
\|\TT\|_*=\max_{\|\X\|_\sigma \le 1}\langle\TT,\X\rangle \mbox{ and } \|\TT\|_\sigma=\max_{\|\X\|_* \le 1}\langle\TT,\X\rangle,
\]
where $\langle,\rangle$ stands for the Frobenius inner product. Computing or approximating tensor nuclear norm is much harder no matter {\color{black}using the definition~\eqref{eq:3nnorm} or the dual formulation---the corresponding feasibility problem is not easy at all. The situation is different for the tensor spectral norm as the feasibility to~\eqref{eq:3tnorm} is trivial. There are various methods~\cite{DDV00,RK00,KB09,CHLZ12,VVM12,NW14,JMZ15,DCD16} to compute the tensor spectral norm in practice but there is only one known method~\cite{N17} to compute the tensor nuclear norm, to the best of our knowledge.} This crucial fact has resulted alternative concepts for the tensor nuclear norm in practice, such as the average nuclear norms of the matrix flattenings from three different ways. In terms of approximating the tensor nuclear norm, the best polynomial-time worst-case approximation bound is $\Omega\left(\frac{1}{\sqrt{n}}\right)$ via matrix flattenings~\cite{H15} or partitions into matrix slices~\cite{L16}. This bound is worse than the best known one $\Omega\left({\sqrt\frac{\ln n}{n}}\right)$ for the tensor spectral norm. It is natural to expect achieving this bound for the dual norm to the tensor spectral norm. As another major work in this paper, via certain reformulation and convex optimization proposed in~\cite{HJL22}, we are able to bridge the gap between the primal and dual norms, with the help of constructions of spherical caps for sphere covering.

Covering a sphere by identical spherical caps has been studied in computational geometry since the pioneering work of Rogers~\cite{R58}. Instead of describing spherical caps via the angular radius, the caps are measured in normalized volume in the study. Specifically, by defining the normalized volume of a spherical cap to be its true volume over the volume of $\BS^n$ (in this sense the normalized volume of $\BS^n$ is one), sphere covering asks for a given positive integer $m$, what is the smallest $\delta$ such that there are $m$ spherical caps with normalized volume $\delta$ covering $\BS^n$? The quantity $\delta m$ is called the density of the covering. Studying the bounds of this density has been the main research topic along this line. An upper bound of $O\left(n\ln n\right)$ for the covering density was obtained by Rogers~\cite{R63} for sufficiently small $\delta$. This remains the best known asymptotic upper bound although there were improvements made in terms of the constant of the asymptotic bound and for any $\delta$ in~\cite{BW03,D07}.

For the lower bound of covering density, there is not a clear answer in general other than the trivial one, i.e., $\delta m\ge 1$. Rogers~\cite{R58} stated that the density of a covering cannot beat a natural strategy based on tiling $\R^n$ with regular simplices, known as the simplex bound, whose value remains a conjecture and unproven. Rogers~\cite{R58} computed that for $\delta\rightarrow 0$ the density is close to $\frac{n}{e\sqrt{e}}$. It is believed that the density is $\Omega(n)$. Several special cases for the simplex bound have been confirmed, either for very small $\delta$ or for $\delta$ in large cap regime; see~\cite{JM20} and references therein. Other than the two trivial cases for $m=1$ and $m=2$ which correspond to $\delta=1$ and $\delta=\frac{1}{2}$, respectively, perhaps the first nontrivial work along this line is due to Lusternik and Schnirelmann~\cite{B06}: If $n$ open or closed sets cover $\BS^n$, then one of these contains a pair of antipodal points. This implies that if $\delta<\frac{1}{2}$ then $m\ge n+1$. An obvious lower bound of $\Omega(n)$ for any universal constant $\delta<\frac{1}{2}$.

There are two optimization problems that are relevant to sphere covering in the literature. The sphere coverage verification is to decide whether a given set of spherical caps cover the sphere or not. Petkovi\'{c} et a.~\cite{PPL12} showed that sphere coverage verification is NP-hard and proposed a recursive algorithm based on quadratic optimization. The spherical discrepancy is to find the furthest point in $\BS^n$ to a given set of points $\{\bv_1,\bv_2,\dots,\bv_m\}\subseteq \BS^n$, i.e., $\min_{\bx\in\BS^n}\max_{1\le i\le m}\bx^{\T}\bv_i$. The spherical discrepancy is also NP-hard since it is the optimization version of sphere coverage verification who is a decision version of spherical discrepancy. Jones and McPartlon~\cite{JM20} proposed a multiplicative weights-based algorithm that obtains an approximation bound up to lower order terms.

Although there is extensive research on the density of sphere covering and related problems, they do not exactly serve the purpose of our study in this paper. The asymptotic bounds on the normalized volumes are not aligned with the goal to obtain approximation bounds based on inner products between unit vectors. The upper bounds obtained in~\cite{BW03} are existence results via a randomized approach. The construction in~\cite{RS09} works only in the large cap regime for $\delta=e^{-\sqrt{n}}$ which resulted the number of caps to be exponential in $n$. A recent work on spherical discrepancy minimization~\cite{JM20} showed an algorithm to generate spherical caps sequentially until a covering is satisfied but the running time to generate a cap is $O(n^{10})$. Our goal is to achieve a good balance between the approximation measured by $\cos\theta$ for the angular radius $\theta$ and the number of caps that are not too large, say bounded by a polynomial function of $n$. More importantly, we hope to obtain explicit constructions of spherical caps to cover the unit sphere. These will be of great beneficial to the algorithm and optimization community apart from our applications in approximating tensor norms. The products of our simple and explicit constructions, together with some trivial and known constructions, are summarized in Table~\ref{table:list}.
\begin{table}[h]
\centering
\small\begin{tabular}{|p{2.6in}|p{1.3in}|p{1.3in}|}
\hline
Set of $\bv$'s for $\BB^n(\bv,\tau)$ & $\tau$ for $\BB^n(\bv,\tau)$  & Number of $\BB^n(\bv,\tau)$'s   \\
\hline
Any $\{\bv\}$ where $\bv\in\BS^n$         & $-1$ & $1$ \\
Any $\{\bv,-\bv\}$ where $\bv\in\BS^n$    & $0$ & $2$ \\
Any regular simplex inscribed in $\BS^n$  & $1/n$ & $n+1$ \\
Any basis of $\R^n$ with their negations  & $1/\sqrt{n}$ & $2n$ \\
    $\BH^n_1$ (Section~\ref{sec:h1}), $\BH^n_4$ and $\BH^n_5$ (Section~\ref{sec:h4}) & $\Omega\big(\sqrt{\ln n/ n}\big)$ & $O(n^\alpha)$ for $\alpha > 1$
\\
$\BH^n_2$ (Section~\ref{sec:h2})   & $\Omega\big(1/\sqrt{\ln n}\big)$  & $O(3^n)$ \\
$\BH^n_3$ (Section~\ref{sec:h3})   & $\Omega\big(1\big)$ & $O(\beta^n)$ for $\beta > 4$\\
Grid points in spherical coordinates & $1-O\big(n/m^2\big)$ & $O(m^{n-1})$ \\
\hline
\end{tabular}
\caption{Constructions of spherical caps to cover the unit sphere}\label{table:list}
\end{table}

This paper is organized as follows. After introducing some uniform notations, we propose various constructions of spherical caps for sphere covering and bound the ratio $\tau$ and number of caps of each construction in Section~\ref{sec:h}. We work around a key ratio $\Omega\left(\sqrt{\frac{\ln n}{n}}\right)$ which is the largest possible if the number of spherical caps is $O(n^\alpha)$ for some universal constant $\alpha>1$, from randomization (Section~\ref{sec:h1}) to deterministic covering (Section~\ref{sec:h4}) with some interesting byproducts (Sections~\ref{sec:h2} and~\ref{sec:h3}). In Section~\ref{sec:tensor}, we apply the covering results to approximate tensor norms. Specifically, we propose the first implementable and deterministic algorithm with the known best approximation bound for the tensor spectral norm and related polynomial optimization problems in Section~\ref{sec:snorm}. A deterministic algorithm with an improved approximation bound for the tensor nuclear norm is proposed in Section~\ref{sec:nnorm}. Numerical performance of the proposed algorithms are reported in Section~\ref{sec:numerical}. Finally, some concluding remarks are given in Section~\ref{sec:remark}.

\subsection*{Some uniform notations}

Throughout this paper we uniformly adopt lowercase letters (e.g., $x$), boldface lowercase letters (e.g., $\bx=\left(x_i\right)$), capital letters (e.g., $X=\left(x_{ij}\right)$), and calligraphic letters (e.g., $\X=\left(x_{i_1i_2\dots i_d}\right)$) to denote scalars, vectors, matrices, and higher-order (order three or more) tensors, respectively. Denote $\R^{n_1\times n_2\times\dots\times n_d}$ to be the space of real tensors of order $d$ with dimension $n_1\times n_2\times\dots\times n_d$. The same notation applies for a vector space and a matrix space when $d=1$ and $d=2$, respectively. Denote $\BN$ to be the set of positive integers.

The Frobenius inner product between two tensors $\U,\V\in\R^{n_1\times n_2\times\dots\times n_d}$ is defined as
\[
\langle\U, \V\rangle := \sum_{i_1=1}^{n_1}\sum_{i_2=1}^{n_2} \dots\sum_{i_d=1}^{n_d} u_{i_1i_2\dots i_d} v_{i_1i_2\dots i_d}.
\]
Its induced Frobenius norm is naturally defined as $\|\TT\|:=\sqrt{\langle\TT,\TT\rangle}$. The two terms automatically apply to tensors of order two (matrices) and tensors of order one (vectors) as well. This is the conventional norm (a norm without a subscript) used throughout the paper.

All blackboard bold capital letters denote sets, such as $\R^n$, the unit sphere $\BS^n$, a spherical cap $\BB^n(\bv,\tau)$, the standard basis $\BE^n:=\{\e_1,\e_2,\dots,\e_n\}$ of $\R^n$, where the superscript $n$ indicates that the concerned set is a subset of $\R^n$. Three vector operations are used, namely the outer product $\otimes$, the Kronecker product $\boxtimes$, and appending vectors $\vee$. Specifically, if $\bx\in\R^{n_1}$ and $\by\in\R^{n_2}$, then
\begin{align*}
\bx\otimes\by&=\bx\by^{\T}\in\R^{n_1\times n_2} \\
\bx\boxtimes\by&=(x_1\by^{\T},x_2\by^{\T},\dots,x_{n_1}\by^{\T})^{\T}\in\R^{n_1n_2} \\
\bx\vee\by&=(x_1,x_2,\dots,x_{n_1},y_1,y_2,\dots,y_{n_2})^{\T}\in\R^{n_1+n_2}.
\end{align*}
These three operators also apply to vector sets via element-wise operations.

As a convention, the notion $\Omega(f(n))$ means that there are positive universal constants $\alpha,\beta$ and $n_0$ such that $\alpha f(n)\le\Omega(f(n))\le\beta f(n)$ for all $n\ge n_0$, i.e., the same order of magnitude to $f(n)$.



\section{Sphere covering by spherical caps}\label{sec:h}

This section is devoted to explicit constructions of spherical caps to cover $\BS^n$ in $\R^n$ for $n\ge2$. 
Although this is more commonly denoted by $\BS^{n-1}$ in the literature, our notation is to emphasize that the sphere resides in the space of $\R^n$ and to better understand the constructions via Kronecker products.

Recall that for $\bv\in\BS^n$ and $-1\le\tau\le 1$, $\BB^n(\bv, \tau)=\left\{\bx\in\BS^n: \bx^{\T}\bv \ge \tau\right\}$ is a closed spherical cap with the angular radius $\arccos\tau$. Obviously, $\BB^n(\bv, -1)=\BS^n$, $\BB^n(\bv, 0)$ is a hemisphere, and $\BB^n(\bv, 1)$ is a single point. A set of unit vectors $\BH^n=\{\bv_i\in\BS^n:i=1,2,\dots,m\}$ is called a $\tau$-hitting set with cardinality $m$ if $\bigcup_{i=1}^m \BB^n\left(\bv_i, \tau\right) = \BS^n$, i.e., the $m$ spherical caps cover the unit sphere. Denote all $\tau$-hitting sets of $\BS^n$ with cardinality no more than $m$ to be
\[
\BT(n,\tau,m):=\left\{\BH^n\subseteq\BS^n: \BH^n \mbox{ is a $\tau$-hitting set},\,|\BH^n|\le m\right\}.
\]
It is easy to see the monotonicity, i.e.,
\[
\begin{array}{ll}
\BT(n,\tau_2,m)\subseteq\BT(n,\tau_1,m) &\mbox{if } \tau_1\le\tau_2 \\
\BT(n,\tau,m_1)\subseteq\BT(n,\tau,m_2) &\mbox{if } m_1\le m_2.
\end{array}
\]

We will be working around $\tau$-hitting sets with $\tau=\Omega\left(\sqrt{\frac{\ln n}{n}}\right)$. This is the largest possible if the cardinality of the hitting set is bounded by $O(n^\alpha)$ with some universal constant {\color{black}$\alpha>1$}; see e.g.~\cite{HJLZ14}. Other useful $\tau$-hitting sets with larger $\tau$'s are also constructed as byproducts that are of independent interest. The aim is to construct hitting sets with the cardinality as small as possible. Let us first look at some elementary ones.

It is obvious that for any $\bv\in\BS^n$, 
\[
\{\bv\}\in\BT(n,-1,1) \mbox{ and } \{\bv,-\bv\}\in\BT(n,0,2)
\]
both attaining the minimum cardinality. For $\tau>0$, the famous Lusternik-Schnirelmann theorem~\cite{B06} rules out any possible $\tau$-hitting set with cardinality no more than $n$. There is an elegant construction of $\frac{1}{n}$-hitting sets with cardinality $n+1$. If $\bv_1,\bv_2,\dots,\bv_{n+1}$ are the vertices of a regular simplex centered at the origin and inscribed in $\BS^n$, then
\[
\left\{\bv_1,\bv_2,\dots,\bv_{n+1}\right\}\in\BT\left(n,\frac{1}{n},n+1\right).
\]
Detailed construction is easier to be obtained from $\R^{n+1}$ and is left to interested readers.
Raising $\tau$ to $\frac{1}{\sqrt{n}}$ without increasing the number of vectors too much, one has for any basis $\{\bv_1,\bv_2,\dots,\bv_n\}$ of $\R^n$,
\[
\left\{\pm\bv_1,\pm\bv_2,\dots,\pm\bv_n\right\}\in\BT\left(n,\frac{1}{\sqrt{n}},2n\right).
\]
However, slightly increasing this threshold, say to $\sqrt{\frac{\ln n}{n}}$, will significantly increase the cardinality of a hitting set. As mentioned earlier, if the cardinality is bounded by a polynomial function of $n$, then the largest possible $\tau=\Omega\left(\sqrt{\frac{\ln n}{n}}\right)$.

Toward the extreme case that $\tau$ is close to one, the longitude and latitude of the Earth provide a clue. For any $\bx=(x_1,x_2,\dots,x_n)^{\T}\in\BS^n$, we denote its spherical coordinates to be $(\varphi_1,\varphi_2,\dots,\varphi_{n-1})$ with $\varphi_1,\varphi_2,\dots,\varphi_{n-2}\in[0,\pi]$ and $\varphi_{n-1}\in[0,2\pi)$ such that
\[ \begin{aligned}x_{1}&=\cos\varphi _{1}\\x_{2}&=\sin\varphi _{1}\cos\varphi _{2}\\ x_{3}&=\sin\varphi _{1}\sin\varphi _{2}\cos\varphi _{3}\\
&\;\;\vdots \\x_{n-1}&=\sin\varphi _{1}\dots \sin\varphi _{n-2}\cos\varphi _{n-1}\\x_{n}&=\sin\varphi _{1}\dots \sin\varphi _{n-2}\sin\varphi _{n-1}.\end{aligned}\]
If we let $\BD_1=\left\{\frac{k\pi}{m}:k=0,1,\dots,m-1\right\}$ and $\BD_{2}=\left\{\frac{k\pi}{m}:k=0,1,\dots,2m-1\right\}$, then the grid points in spherical coordinates (see~\cite[Lemma 3.1]{HJL22}) {\color{black}are}
\begin{equation}\label{eq:grid}
\BH^n_0(m):=\left\{\bx\in\BS^n: \varphi_1,\varphi_2,\dots,\varphi_{n-2}\in\BD_1,\,\varphi_{n-1}\in\BD_2 \right\} \in \BT\left( n,1-\frac{\pi^2(n-1)}{8m^2},2m^{n-1} \right).
\end{equation}
{\color{black}To see why $\BH^n_0(m)$ is such a hitting set.} For any $\bz\in\BS^n$ with spherical coordinates $\varphi(\bz)$, there must exist $\bx\in\BH^n_0(m)$ with spherical coordinates $\varphi(\bx)$, such that
$$
\|\bx-\bz\| \le \|\varphi(\bx)-\varphi(\bz)\|\le \frac{1}{2}\cdot\frac{\pi}{m}\cdot\sqrt{n-1}=\frac{\pi\sqrt{n-1}}{2m}.
$$
Since $\|\bx\|=\|\bz\|=1$, the above further leads to
$$
\bx^{\T}\bz = \frac{1}{2}\left(2-\|\bx-\bz\|^2\right)\ge \frac{1}{2}\left(2-\frac{\pi^2(n-1)}{4m^2}\right)=
1-\frac{\pi^2(n-1)}{8m^2}.
$$

\subsection{Randomized $\Omega\big(\sqrt{\ln n/n}\big)$-hitting sets}\label{sec:h1}

It is instructive to consider randomized hitting sets via the uniform distribution on $\BS^n$. This is also important as it guarantees the existence of $\Omega\left(\sqrt{\frac{\ln n}{n}}\right)$-hitting sets. The following probability bound (see e.g.,~\cite{HJLZ14,BGKKLS98}) provides an insight of such a hitting set.
\begin{lemma} \label{thm:inner}
For any $\gamma\in(0,\frac{n}{\ln n})$, if $\bu$ and $\bv$ are drawn independently and uniformly on $\BS^n$, then there is a constant $\delta_\gamma$ depending on $\gamma$ only, such that
\[
\Prob\left\{\bu^{\T}\bv\ge\sqrt{\frac{\gamma\ln n}{n}}\right\}\ge\frac{\delta_\gamma}{n^{2\gamma}\sqrt{\ln n}}.
\]
\end{lemma}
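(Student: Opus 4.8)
The plan is to compute the probability that the inner product of two independent uniform unit vectors exceeds a given threshold by reducing everything to the distribution of a single coordinate. By rotational invariance, if $\bu,\bv$ are independent and uniform on $\BS^n$, then conditioning on $\bv$ and rotating so that $\bv=\e_1$, the quantity $\bu^{\T}\bv$ has the same distribution as $u_1$, the first coordinate of a uniformly random point on $\BS^n$. So the task reduces to lower bounding $\Prob\{u_1\ge t\}$ for $t=\sqrt{\gamma\ln n/n}$.

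Next I would write down the density of $u_1$. It is well known that for $\bu$ uniform on $\BS^n$, the marginal density of $u_1$ on $[-1,1]$ is proportional to $(1-s^2)^{(n-3)/2}$, with normalizing constant $c_n = \Gamma(n/2)/(\sqrt{\pi}\,\Gamma((n-1)/2))$, which is of order $\sqrt{n}$ (by Stirling). Hence
\[
\Prob\left\{u_1\ge t\right\} = c_n\int_t^1 (1-s^2)^{\frac{n-3}{2}}\,ds.
\]
To lower bound this integral, restrict the range of integration to a short interval, say $s\in[t,t+1/\sqrt{n}]$, whose length is $\Omega(1/\sqrt{n})$, and on that interval bound $(1-s^2)^{(n-3)/2}$ from below by its value at the right endpoint. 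Since $t^2 = \gamma\ln n/n$ and the added piece contributes $O((t+1/\sqrt n)^2)=O(\ln n/n)$ at most, one gets $(1-s^2)^{(n-3)/2}\ge (1-O(\ln n/n))^{(n-3)/2}\cdot(1-t^2)^{(n-3)/2}$. The first factor is bounded below by a constant power of $n$ (something like $n^{-O(1)}$), absorbed into constants, wait — more carefully, $(1-c\ln n/n)^{n/2}\approx n^{-c/2}$, so this introduces only an extra polynomial-in-$n$ factor with exponent depending on $\gamma$; and $(1-t^2)^{(n-3)/2} = (1-\gamma\ln n/n)^{(n-3)/2}\approx n^{-\gamma/2}$. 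Multiplying the density lower bound of order $\sqrt{n}\cdot n^{-\gamma/2-O(1)}$ by the interval length $1/\sqrt{n}$ yields a bound of the form $\delta_\gamma\, n^{-\gamma}/\sqrt{\ln n}$ after collecting the logarithmic corrections from the Stirling estimate of $c_n$ and possibly a more delicate treatment of the $(1-t^2)^{n/2}$ term via $\ln(1-x)=-x-x^2/2-\cdots$, where the $x^2$ and higher terms are negligible because $t^2\to 0$, but keeping careful track is needed to land exactly on exponent $2\gamma$ rather than $\gamma$ — this suggests the threshold in the statement is $\sqrt{\gamma\ln n/n}$ producing $n^{-\gamma}$ naively, so the factor $n^{2\gamma}$ in the statement likely comes from also accounting for a union-bound-style argument or a different normalization; I would re-examine whether the intended reading pairs $\sqrt{\gamma\ln n/n}$ with $(1-t^2)^{n/2}\sim e^{-\gamma\ln n/2}=n^{-\gamma/2}$ and the remaining factor of $n^{-\gamma}$ emerges from bounding $c_n$ crudely or from the specific short-interval choice.

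The main obstacle is precisely this bookkeeping of the exponent: getting the polynomial power of $n$ to come out as exactly $n^{2\gamma}$ (and the logarithmic factor as exactly $\sqrt{\ln n}$) requires careful, non-lossy estimates of (i) the normalizing constant $c_n$ via Stirling, (ii) the term $(1-t^2)^{(n-3)/2}$ via the expansion of $\ln(1-t^2)$, confirming the higher-order terms $t^4 n,\dots$ are $o(1)$ since $t^4 n = \gamma^2(\ln n)^2/n\to 0$, and (iii) the choice of the sub-interval length — taking it proportional to $1/\sqrt{n}$ balances the interval-length gain against the loss in the integrand. The constraint $\gamma<n/\ln n$ is exactly what guarantees $t\le 1$ so the interval $[t,t+1/\sqrt n]$ (for $n$ large) stays inside $[-1,1]$ and $1-t^2$ stays bounded away from $0$ enough for the expansion to be valid. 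Everything else — rotational invariance, the marginal density formula, monotonicity of the integrand — is routine, so I would state those quickly and concentrate the writing on the asymptotic estimate of the integral, concluding by defining $\delta_\gamma$ as the resulting collected constant.
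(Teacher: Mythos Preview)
The paper does not prove this lemma; it is quoted as a known bound with citations to~\cite{HJLZ14,BGKKLS98}. So there is no ``paper's proof'' to compare against, and your outline is the standard way such bounds are established: rotational invariance reduces $\bu^{\T}\bv$ to a single coordinate $u_1$ with density $c_n(1-s^2)^{(n-3)/2}$, $c_n=\Gamma(n/2)/(\sqrt\pi\,\Gamma((n-1)/2))\sim\sqrt{n/(2\pi)}$, and then one lower bounds $\int_t^1(1-s^2)^{(n-3)/2}\,ds$.

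Your one point of confusion is not a gap but a misreading of what needs to be shown. The inequality is a \emph{lower} bound on the probability, so you do not need to ``land exactly on exponent $2\gamma$''; you only need something at least this large. In fact the natural estimate gives a tighter result. With $t=\sqrt{\gamma\ln n/n}$ one has $(1-t^2)^{(n-3)/2}=\exp\!\big(\tfrac{n-3}{2}\ln(1-t^2)\big)$, and since $t^2=\gamma\ln n/n\to 0$ and $t^4 n\to 0$, this is $(1+o(1))\,n^{-\gamma/2}$. Combining with $c_n\asymp\sqrt n$ and an interval of length $\asymp 1/(t\,n)$ (or using the standard Mills-ratio type estimate for the spherical cap) yields
\[
\Prob\{u_1\ge t\}\;\asymp\;\frac{(1-t^2)^{(n-1)/2}}{t\sqrt{n}}\;\asymp\;\frac{n^{-\gamma/2}}{\sqrt{\gamma\ln n}},
\]
which for every $\gamma>0$ and all sufficiently large $n$ dominates $\delta_\gamma\,n^{-2\gamma}/\sqrt{\ln n}$; the finitely many remaining $n$ with $\gamma<n/\ln n$ are absorbed into $\delta_\gamma$. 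So drop the attempt to match the exponent $2\gamma$ exactly: the stated bound is deliberately slack, and your computation already overshoots it.
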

In fact, it is not difficult to cover $1-\epsilon$ of the volume of the unit sphere for any $\epsilon>0$ by applying Lemma~\ref{thm:inner} with the union bound; see~\cite{HJLZ14}. However, this is a much weaker statement than Theorem~\ref{thm:h1} below. {\color{black}In particular, the event of covering $1-\epsilon$ of the volume of $\BS^n$ for any given $\epsilon >0$ does not even guarantee the existence of a full cover, hence being weaker than the latter event.} The following randomized hitting set has a cardinality $O(n^\alpha)$ for some constant $\alpha>1$.

\begin{theorem}\label{thm:h1}
For any $\epsilon>0$ and $\gamma\in(0,\frac{n}{\ln n})$, there is a constant $\kappa_\gamma>0$ depending on $\gamma$ only, such that
\[\BH^n_1(\gamma,\epsilon):=\left\{\bz_i \mbox{ is i.i.d. uniform on $\BS^n$ for }i=1,2,\dots, \left\lceil \kappa_\gamma n^{2\gamma}\sqrt{\ln n}\left(n\ln n+\ln\frac{1}{\epsilon}\right)\right\rceil \right\}\]
satisfies
\[\Prob\left\{\BH^n_1(\gamma,\epsilon)\in\BT\left(n, \sqrt{\frac{\gamma\ln n}{2n}}, \left\lceil \kappa_\gamma n^{2\gamma}\sqrt{\ln n}\left(n\ln n+\ln\frac{1}{\epsilon}\right)\right\rceil \right)\right\}\ge1-\epsilon.\]
\end{theorem}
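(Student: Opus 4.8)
The plan is to establish the theorem by a union bound over $\binom{m}{1}$-type events combined with an $\varepsilon$-net argument on $\BS^n$, using Lemma~\ref{thm:inner} as the single-vector hitting estimate. Write $m=m(\gamma,\epsilon)=\left\lceil \kappa_\gamma n^{2\gamma}\sqrt{\ln n}\left(n\ln n+\ln\frac{1}{\epsilon}\right)\right\rceil$ for the cardinality, and set $\tau=\sqrt{\gamma\ln n/(2n)}$. The key difficulty compared with the easy ``cover $1-\epsilon$ of the volume'' statement is that we must cover \emph{every} point of $\BS^n$, not merely almost all of it; a naive union bound over the uncountably many points of the sphere fails, so the main work is to discretize the sphere at the right scale and then absorb the discretization error into the gap between $\sqrt{\gamma\ln n/n}$ (the radius delivered by Lemma~\ref{thm:inner}) and the claimed $\tau=\sqrt{\gamma\ln n/(2n)}$. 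That is precisely why the statement halves the constant inside the square root.

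First I would fix a finite $\eta$-net $\N\subseteq\BS^n$ of the sphere in Euclidean distance, i.e.\ a set such that every $\bx\in\BS^n$ has some $\bw\in\N$ with $\|\bx-\bw\|\le\eta$; a standard volumetric bound gives $|\N|\le(3/\eta)^n$. I would choose $\eta$ of order $\sqrt{\ln n/n}$ up to constants, concretely small enough that $\|\bx-\bw\|\le\eta$ forces $\bx^{\T}\bw\ge 1-\eta^2/2$, and in turn small enough that any $\bz_i$ with $\bz_i^{\T}\bw\ge\sqrt{\gamma\ln n/n}$ also satisfies $\bz_i^{\T}\bx\ge\sqrt{\gamma\ln n/(2n)}=\tau$ for all $\bx$ within $\eta$ of $\bw$ (this is a two-line Cauchy--Schwarz/triangle-inequality estimate on the sphere, using $\bz_i^{\T}\bx\ge \bz_i^{\T}\bw-\|\bz_i\|\,\|\bx-\bw\|$). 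Thus it suffices to hit every net point $\bw\in\N$ with the relaxed threshold $\sqrt{\gamma\ln n/n}$.

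Next, for a fixed net point $\bw$, Lemma~\ref{thm:inner} (applied with $\bu=\bw$, $\bv=\bz_i$; uniformity on the sphere makes the one-sided inner product law rotation-invariant) gives $\Prob\{\bz_i^{\T}\bw\ge\sqrt{\gamma\ln n/n}\}\ge p:=\delta_\gamma/(n^{2\gamma}\sqrt{\ln n})$. Since the $\bz_i$ are i.i.d., the probability that \emph{no} $\bz_i$ hits $\bw$ is at most $(1-p)^m\le e^{-pm}$. Taking a union bound over the at most $(3/\eta)^n$ net points, the probability that some net point is missed is at most $(3/\eta)^n e^{-pm}=\exp\left(n\ln(3/\eta)-pm\right)$. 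Plugging in $\eta=\Theta(\sqrt{\ln n/n})$ we have $\ln(3/\eta)=O(\ln n)$, so it is enough to require $pm\ge n\cdot O(\ln n)+\ln(1/\epsilon)$, i.e.\ $m\ge \frac{1}{p}\left(O(n\ln n)+\ln\frac1\epsilon\right)=\frac{1}{\delta_\gamma}n^{2\gamma}\sqrt{\ln n}\left(O(n\ln n)+\ln\frac1\epsilon\right)$. Choosing $\kappa_\gamma$ to be the implied constant over $\delta_\gamma$ makes the stated cardinality satisfy this bound, whence the failure probability is at most $\epsilon$, and on the complementary event every net point, hence (by the first step) every point of $\BS^n$, lies in some $\BB^n(\bz_i,\tau)$, i.e.\ $\BH^n_1(\gamma,\epsilon)\in\BT(n,\tau,m)$.

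I expect the main obstacle to be bookkeeping the constants so that the three scales interlock: the net radius $\eta$ must be small enough both for the ``hit $\bw$ $\Rightarrow$ hit nearby $\bx$'' implication (which eats the factor $2$ under the square root) and for $n\ln(3/\eta)$ to stay $O(n\ln n)$, while simultaneously the requirement $\gamma\in(0,n/\ln n)$ must be respected so that Lemma~\ref{thm:inner} is applicable and $p$ is a genuine (positive) probability lower bound. None of these steps is deep, but getting a clean single constant $\kappa_\gamma$ depending on $\gamma$ alone requires care that the $\eta$-dependent terms are themselves controlled by absolute constants and by $\gamma$ only.
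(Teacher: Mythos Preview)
Your argument is correct and follows essentially the same scheme as the paper: discretize $\BS^n$ by a finite net whose log-cardinality is $O(n\ln n)$, apply Lemma~\ref{thm:inner} together with independence and a union bound to hit every net point at level $\sqrt{\gamma\ln n/n}$, then absorb the net radius into the gap between $\sqrt{\gamma\ln n/n}$ and $\sqrt{\gamma\ln n/(2n)}$. The only cosmetic differences are that the paper uses the explicit spherical-coordinate grid $\BH^n_0(m)$ (with $m=n$, so $|\BH^n_0|=2n^{n-1}$) in place of your volumetric $\eta$-net, and controls the perturbation via the angle-addition formula $\bz^{\T}\bx\ge\cos(\theta_1+\theta_2)$ rather than your inequality $\bz_i^{\T}\bx\ge\bz_i^{\T}\bw-\|\bx-\bw\|$; both choices give the same $O(n\ln n)$ in the exponent and the same $\kappa_\gamma$ dependence.
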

\begin{proof}
The sphere covering is established in two steps, a spherical grid $\BH^n_0$ to cover the whole sphere and the randomized hitting set $\BH^n_1$ to cover the grid.

According to~\eqref{eq:grid} one has $\BH^n_0(m)\in\BT\left( n,1-\frac{\pi^2(n-1)}{8m^2},2m^{n-1} \right)$. Let $m\ge n$. For any $\bx\in\BS^n$, there exists $\by\in \BH^n_0(m)$ such that $\bx^{\T}\by\ge 1-\frac{\pi^2(n-1)}{8m^2}$. By Lemma~\ref{thm:inner}, for any $\bz_i\in\BH^n_1(\gamma,\epsilon)$, there exists an $\delta_\gamma$ depending on $\gamma$ and
$\Prob\left\{\by^{\T}\bz_i \ge \sqrt{\frac{\gamma\ln n}{n}}\right\} \ge \frac{\delta_\gamma}{n^{2\gamma}\sqrt{\ln n}}$,
i.e., $\Prob\left\{\by^{\T}\bz_i < \sqrt{\frac{\gamma\ln n}{n}}\right\} \le 1-\frac{\delta_\gamma}{n^{2\gamma}\sqrt{\ln n}}$. 
Denote $t=|\BH^n_1(\gamma,\epsilon)|$. By the independence of $\bz_i$'s, we have
\[
\Prob\left\{\by\notin\bigcup_{i=1}^t\BB^n\left(\bz_i,\sqrt{\frac{\gamma\ln n}{n}}\right)\right\} = \Prob\left\{\max_{1\le i\le t}\by^{\T}\bz_i < \sqrt{\frac{\gamma\ln n}{n}}\right\} \le \left(1-\frac{\delta_\gamma}{n^{2\gamma}\sqrt{\ln n}}\right)^t.
\]
Since $\left|\BH^n_0(m)\right|=2m^{n-1}$ and the points of $\BH^n_0(m)$ are fixed, the probability that $\bigcup_{i=1}^t\BB^n\left(\bz_i,\sqrt{\frac{\gamma\ln n}{n}}\right)$ fails to cover at least one point of $\BH^n_0(m)$ is no more than $2m^{n-1}\left(1-\frac{\delta_\gamma}{n^{2\gamma}\sqrt{\ln n}}\right)^t$. In other words,
\begin{align*}
  \Prob\left\{\BH^n_0(m)\subseteq\bigcup_{i=1}^t\BB^n\left(\bz_i,\sqrt{\frac{\gamma\ln n}{n}}\right)\right\}
  \ge 1-2m^{n-1}\left(1-\frac{\delta_\gamma}{n^{2\gamma}\sqrt{\ln n}}\right)^t.
\end{align*}
By noticing that $m\ge n\ge2$, it is not difficulty to verify that if $t\ge \frac{n^{2\gamma}\sqrt{\ln n}}{\delta_\gamma}\left(n\ln m+\ln\frac{1}{\epsilon}\right)$, then the right hand side of the above is at least $1-\epsilon$.

To summarize, if $\BH^n_0(m)\subseteq\bigcup_{i=1}^t\BB^n\left(\bz_i,\sqrt{\frac{\gamma\ln n}{n}}\right)$, then for any $\bx\in\BS^n$, there exists $\by\in\BH^n_0(m)$ such that $\by^{\T}\bx\ge 1-\frac{\pi^2(n-1)}{8m^2}$ and further there exists $\bz\in\BH^n_1(\gamma,\epsilon)$ such that $\bz^{\T}\by\ge\sqrt{\frac{\gamma\ln n}{n}}$. If we are able to verify $\bz^{\T}\bx\ge\sqrt{\frac{\gamma\ln n}{2n}}$, then we must have $\bigcup_{i=1}^t\BB^n\left(\bz_i,\sqrt{\frac{\gamma\ln n}{2n}}\right)=\BS^n$. This finally leads to
\[
\Prob\left\{ \bigcup_{i=1}^t\BB^n\left(\bz_i,\sqrt{\frac{\gamma\ln n}{2n}}\right)=\BS^n \right\}
\ge  \Prob\left\{\BH^n_0(m)\subseteq\bigcup_{i=1}^t\BB^n\left(\bz_i,\sqrt{\frac{\gamma\ln n}{n}}\right)\right\} \ge 1-\epsilon.
\]

In order to show that $\bz^{\T}\bx\ge\sqrt{\frac{\gamma\ln n}{2n}}$, we let $\theta_1=\arccos (\by^{\T}\bx)$ and $\theta_2=\arccos (\bz^{\T}\by)$. Since $\cos\theta_1\ge1-\frac{\pi^2(n-1)}{8m^2}\ge 1-\frac{3}{2m}$, one has
$|\sin\theta_1| \le \sqrt{1-\left(1-\frac{3}{2m}\right)^2}\le\sqrt{\frac{3}{m}}$. Therefore,
\begin{equation}\label{eq:randombound}
\bz^{\T}\bx \ge \cos(\theta_1+\theta_2)
=\cos\theta_1\cos\theta_2-\sin\theta_1\sin\theta_2
\ge \left(1-\frac{3}{2m}\right)\cdot \sqrt{\frac{\gamma\ln n}{n}}-\sqrt{\frac{3}{m}}\cdot 1\ge \sqrt{\frac{\gamma\ln n}{2n}}
\end{equation}
if $n\ge n_0$ for some $n_0$ that depends on $\gamma$ only. By choosing $m=n$ in $\BH^n_0(m)$ and $\kappa_\gamma=\frac{1}{\delta_\gamma}$ and we have the desired $t$ for $n\ge n_0$.

To finish the final piece for remaining $n\le n_0$, we may enlarge $m$ in $\BH^n_0(m)$ in order for~\eqref{eq:randombound} to hold. If we choose $\kappa_\gamma= \frac{\ln m}{\delta_\gamma \ln n}$ correspondingly, this will ensure
\[
\kappa_\gamma n^{2\gamma}\sqrt{\ln n}\left(n\ln n+\ln\frac{1}{\epsilon}\right)= \frac{\ln m}{\ln n}\cdot\frac{n^{2\gamma}\sqrt{\ln n}}{\delta_\gamma }\left(n\ln n+\ln\frac{1}{\epsilon}\right) \ge \frac{n^{2\gamma}\sqrt{\ln n}}{\delta_\gamma}\left(n\ln m+\ln\frac{1}{\epsilon}\right).
\]
The largest $\kappa_\gamma$ for these finite $n\le n_0$ provides the final $\kappa_\gamma$ that depends only on $n_0$ who itself depends on $\gamma$ only.
\end{proof}

Theorem~\ref{thm:h1} not only provides a simple construction with varying $\gamma$ but also trivially implies the existence of hitting sets in $\BT\left(n, \Omega\left(\sqrt{\frac{\ln n}{n}}\right), O(n^\alpha)\right)$. Although $\BH^n_1(\gamma,\epsilon)$ is a full sphere covering with probability $1 - \epsilon$ for any small $\epsilon>0$, it cannot be used to derive deterministic algorithms and even in some scenarios the feasibility may be questioned as we will see in approximating the tensor nuclear norm in Section~\ref{sec:nnorm}. Moreover, to verify whether $\BH^n_1(\gamma,\epsilon)$ covers the sphere or not, the sphere coverage verification, is NP-hard~\cite{PPL12}. Therefore, explicit and deterministic constructions of hitting sets in $\BT\left(n, \Omega\left(\sqrt{\frac{\ln n}{n}}\right), O(n^\alpha)\right)$ are important. To get this job done, let us first look at two types $\tau$-hitting sets with larger $\tau$.

\subsection{An $\Omega\big(1/{\sqrt{\ln n}}\big)$-hitting set}\label{sec:h2}

As the hitting ratio $\tau$ goes beyond $\Omega\left(\sqrt{\frac{\ln n}{n}}\right)$, we have to give up the polynomiality of $n$. Let us consider
\begin{equation*} 
\BH^n_2:=\left\{\frac{\bz}{\|\bz\|}\in\BS^n: \bz\in\{-1,0,1\}^n,\,\|\bz\|\neq 0\right\}.
\end{equation*}
It is obvious that $|\BH^n_2|<3^n$. We need to work out how large $\tau$ is for this $\tau$-hitting set, essentially Theorem~\ref{thm:h2} below. Interestingly, some results in matroid theory will be used in the proof.


To begin with, let $\BI:= \{1, 2, \dots, n\}$ and its power set $2^{\BI}:= \{\BD: \BD \subseteq \BI \}$. For any $\BD\in 2^{\BI}$, define
\[
\BY^n_\BD:=\left\{\by\in\R^n : y_i \in\{-1,1\} \mbox{ for } i \in \BD \mbox{ and } y_i = 0 \mbox{ for } i \in \BI\setminus\BD\right\},
\]
and denote $\BY^n =\bigcup_{\BD \in 2^{\BI}\setminus\{\emptyset\}} \BY^n_{\BD}$. It is easy to see that $\BH^n_2=\left\{\frac{\by}{\|\by\|}:\by\in\BY^n\right\}$. Our goal is to establish a lower bound of
$\min_{\bx \in \BS^n}\max_{\bz \in \BH^n_2} \bx^{\T} \bz$.
\begin{theorem}\label{thm:h2}
It holds that
\begin{equation}\label{eq:h2thm}
\min_{\bx \in \BS^n}\max_{\bz \in \BH^n_2} \bx^{\T} \bz=\min_{\bx \in \BS^n}\max_{\by \in \BY^n} \frac{\bx^{\T} \by}{\|\by\|} = \min_{\bx \in \BS^n}\max_{\BD \in 2^{\BI}\setminus \{\emptyset\}}\max_{\by \in \BY^n_{\BD}} \frac{\bx^{\T} \by}{\|\by\|} =\min_{\bx \in \BS^n}\max_{\BD \in 2^{\BI}\setminus \{\emptyset\}} \sum_{i \in \BD} \frac{|x_i|} {\sqrt{|\BD |}} \ge \frac{2}{\sqrt{\ln n+5 }}.
\end{equation}
\end{theorem}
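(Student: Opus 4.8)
The three equalities are elementary rearrangement facts. Fix $\bx\in\BS^n$ and a nonempty $\BD\in 2^{\BI}\setminus\{\emptyset\}$: every $\by\in\BY^n_{\BD}$ has $\|\by\|=\sqrt{|\BD|}$, and $\bx^{\T}\by=\sum_{i\in\BD}x_iy_i$ is maximized over $y_i\in\{-1,1\}$ by $y_i=\sign(x_i)$, giving $\sum_{i\in\BD}|x_i|$; this is the third equality. The first two follow from $\BH^n_2=\{\by/\|\by\|:\by\in\BY^n\}$, $\bx^{\T}(\by/\|\by\|)=\bx^{\T}\by/\|\by\|$, and $\BY^n=\bigcup_{\BD\ne\emptyset}\BY^n_{\BD}$. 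So everything reduces to showing, for every $\bx\in\BS^n$,
\[
\max_{\BD\ne\emptyset}\frac{\sum_{i\in\BD}|x_i|}{\sqrt{|\BD|}}\ \ge\ \frac{2}{\sqrt{\ln n+5}}.
\]
Relabel so that $a_1\ge a_2\ge\cdots\ge a_n\ge 0$ is the decreasing rearrangement of $(|x_i|)$, with $\sum_{i=1}^n a_i^2=1$. For a prescribed cardinality $k$ the optimal $\BD$ is $\{1,\dots,k\}$, so with $S_k:=a_1+\cdots+a_k$ and $S_0:=0$ the quantity to bound below is $t:=\max_{1\le k\le n}S_k/\sqrt k$; in particular $S_k\le t\sqrt k$ for all $k$.

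The naive estimate $a_k\le S_k/k\le t/\sqrt k$, plugged into $1=\sum a_k^2\le t^2\sum_{k=1}^n k^{-1}\le t^2(\ln n+1)$, only yields $t\ge 1/\sqrt{\ln n+1}$, which is short of the target by (essentially) a factor of $2$: it fails to see that for the near-extremal profile $a_k\propto k^{-1/2}$ the partial sums actually grow like $2t\sqrt k$. To recover the factor I would use summation by parts twice. Since $a_k-a_{k+1}\ge 0$ and $S_k\le t\sqrt k$,
\[
\sum_{k=1}^n a_k^2=\sum_{k=1}^n a_k(S_k-S_{k-1})=S_na_n+\sum_{k=1}^{n-1}S_k(a_k-a_{k+1})\le t\Big(\sqrt n\,a_n+\sum_{k=1}^{n-1}\sqrt k\,(a_k-a_{k+1})\Big).
\]
A second summation by parts collapses the bracket to $\sum_{k=1}^n(\sqrt k-\sqrt{k-1})\,a_k$, and Cauchy--Schwarz against $(a_k)$ then gives
\[
1=\sum_{k=1}^n a_k^2\ \le\ t\,\Big(\sum_{k=1}^n(\sqrt k-\sqrt{k-1})^2\Big)^{1/2}\Big(\sum_{k=1}^n a_k^2\Big)^{1/2},
\]
i.e.\ $t\ge\big(\sum_{k=1}^n(\sqrt k-\sqrt{k-1})^2\big)^{-1/2}$.

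It remains to estimate $C:=\sum_{k=1}^n(\sqrt k-\sqrt{k-1})^2$. Writing $\sqrt k-\sqrt{k-1}=1/(\sqrt k+\sqrt{k-1})$ and using $\sqrt k+\sqrt{k-1}\ge 2\sqrt{k-1}$ for $k\ge 2$ yields $C\le 1+\frac14\sum_{k=2}^n\frac1{k-1}=1+\frac14 H_{n-1}\le 1+\frac14(1+\ln n)=\frac{\ln n+5}{4}$, hence $t\ge 2/\sqrt{\ln n+5}$, as claimed. I expect the only genuine obstacle to be exactly this factor-of-$2$ sharpening: one must resist the crude averaging bound on $a_k$ and instead exploit the monotonicity of $(a_k)$ through the two Abel summations, after which the harmonic-sum estimate and the constant $5$ drop out routinely; this elementary route also sidesteps any appeal to matroid theory.
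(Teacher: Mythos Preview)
Your argument is correct and reaches the same constant, but the route is genuinely different from the paper's. The paper recasts the inequality as a convex maximization over the polytope $\{\bx\ge 0:\sum_{i\in\BD}x_i\le\alpha\sqrt{|\BD|}\ \forall\,\BD\}$, observes that $g(\BD)=\alpha\sqrt{|\BD|}$ is a polymatroid rank function, and then invokes the greedy characterization of extreme points of polymatroids to identify the extremal vector $z_{\pi_i}=\alpha(\sqrt i-\sqrt{i-1})$ explicitly; the bound $\sum_i(\sqrt i-\sqrt{i-1})^2\le(\ln n+5)/4$ then finishes. You instead work directly with the decreasing rearrangement, use two Abel summations to turn the constraint $S_k\le t\sqrt k$ into $1\le t\sum_k(\sqrt k-\sqrt{k-1})a_k$, and close with Cauchy--Schwarz, landing on the \emph{same} key sum. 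Your approach is entirely elementary and self-contained, sidestepping the polymatroid machinery; the paper's approach is heavier but has the merit of exhibiting the extremizer explicitly (your equality case in Cauchy--Schwarz, $a_k\propto\sqrt k-\sqrt{k-1}$, recovers it, though you do not say so). Either way the final harmonic-sum estimate and the constant $5$ are identical.
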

It is straightforward to verify all the equalities in~\eqref{eq:h2thm}. To show the inequality, let us consider the following optimization problem
\[
\max\left\{\|\bx\|^2 : \sum_{i \in \BD} |x_i| \le \alpha\sqrt{|\BD|} \mbox{ for all }  \BD \in {2^{\BI}\setminus\{\emptyset\}} \right\},
\]
where $\alpha\ge0$ is a given constant. This is equivalent to
\begin{equation}\label{eq:polymatroid}
\max\left\{\|\bx\|^2 : \bx\ge{\bf0},\,\sum_{i \in \BD} x_i \le \alpha\sqrt{|\BD|} \mbox{ for all }  \BD \in2^{\BI} \right\},
\end{equation}
which is to maximize a strictly convex quadratic function over a polyhedron
\[\BX^n =\left\{ \bx \in \R^n_{+} : \sum_{i \in \BD} x_i \le \alpha\sqrt{|\BD|}  \mbox{ for all } \BD \in {2^{\BI}}\right\}.\]
Therefore, the optimal solution of~\eqref{eq:polymatroid} must be obtained at some extreme points of $\BX^n$. To compute the optimal value, we now characterize extreme optimal points of~\eqref{eq:polymatroid}. We need the following two technical results for the preparation.

\begin{lemma}\label{thm:polymatroid}
$g: 2^{\BI} \to \R$ where $g(\BD) = \alpha \sqrt{|\BD|}$ with $\alpha >0$, then $\BX^n$ is a polymatroid with respect to the function $g$ and the index set $\BI$.
\end{lemma}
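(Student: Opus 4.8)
The plan is to verify the two defining properties of a polymatroid for the set function $g(\BD)=\alpha\sqrt{|\BD|}$ on the ground set $\BI$, namely (i) $g$ is normalized and nonnegative, i.e. $g(\emptyset)=0$ and $g(\BD)\ge 0$ for all $\BD$; (ii) $g$ is monotone nondecreasing, i.e. $g(\BA)\le g(\BB)$ whenever $\BA\subseteq\BB$; and (iii) $g$ is submodular, i.e. $g(\BA)+g(\BB)\ge g(\BA\cup\BB)+g(\BA\cap\BB)$ for all $\BA,\BB\in 2^{\BI}$. Once these hold, the polyhedron $\BX^n=\{\bx\in\R^n_+:\sum_{i\in\BD}x_i\le g(\BD)\ \text{for all }\BD\in 2^{\BI}\}$ is by definition the polymatroid associated with $g$, which is exactly the claim.

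Properties (i) and (ii) are immediate from the fact that $t\mapsto\alpha\sqrt{t}$ is nonnegative and strictly increasing on $[0,\infty)$ together with $|\emptyset|=0$ and $|\BA|\le|\BB|$ for $\BA\subseteq\BB$. The substance is the submodularity in (iii). Here I would exploit that $g(\BD)$ depends only on the cardinality $|\BD|$, and reduce the inequality to a statement about the concave function $\phi(t)=\alpha\sqrt{t}$. Writing $a=|\BA\cap\BB|$, $p=|\BA\setminus\BB|$, $q=|\BB\setminus\BA|$, so that $|\BA|=a+p$, $|\BB|=a+q$, $|\BA\cap\BB|=a$, and $|\BA\cup\BB|=a+p+q$, the desired inequality becomes
\[
\phi(a+p)+\phi(a+q)\ \ge\ \phi(a+p+q)+\phi(a).
\]
This is precisely the statement that $\phi$ has nonincreasing increments (a discrete/real concavity property): for a concave function $\phi$ on $[0,\infty)$ and nonnegative reals, $\phi(s+p)-\phi(s)\ge\phi(s+p+q)-\phi(s+q)$ whenever $s\le s+q$, which follows by applying the definition of concavity to the two pairs of points, or equivalently by noting $\phi'(t)=\tfrac{\alpha}{2\sqrt t}$ is nonincreasing and integrating. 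Setting $s=a$ gives exactly the inequality above, completing the verification of submodularity.

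The only mild subtlety I anticipate is handling the boundary case $a=0$ cleanly, since $\phi'$ blows up at $0$; but the integrated form of concavity (or direct use of the inequality $\sqrt{x}+\sqrt{y}\ge\sqrt{x+y}$ for $x,y\ge 0$, applied appropriately, plus monotonicity) sidesteps any differentiability concern, so this is not a real obstacle. Assembling (i), (ii), (iii) then yields that $g$ is a polymatroid rank function and hence $\BX^n$ is the polymatroid it defines, which is what we set out to prove.
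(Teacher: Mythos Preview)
Your proposal is correct and follows essentially the same route as the paper: verify that $g$ is normalized, monotone, and submodular, with the submodularity reduced via cardinalities to a concavity-type inequality for $t\mapsto\sqrt{t}$. The only cosmetic difference is that the paper proves the key inequality by the conjugate trick $\sqrt{a+b}-\sqrt{a}=\frac{b}{\sqrt{a+b}+\sqrt{a}}$ rather than invoking concavity abstractly, but the substance is identical.
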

\begin{proof}
It suffices to show that $g$ is a rank function, i.e., normalized, nondecreasing and submodular. Obviously, $g(\emptyset)=0$ and $g(\BD_1) = {\color{black}\alpha \sqrt{|\BD_1|} \le \alpha \sqrt{|\BD_2|} }= g(\BD_2)$ whenever $\BD_1 \subseteq \BD_2 \subseteq \BI$. It remains to show the submodularity
\[
g(\BD_1 \cup \BD_2) + g(\BD_1 \cap \BD_2) \le g(\BD_1) + g(\BD_2)\quad \forall\;\BD_1, \BD_2 \subseteq \BI.
\]
If we let $|\BD_1| = a$, $|\BD_2\setminus \BD_1| = b$, and $|\BD_1 \cap \BD_2|=c$, then the above inequality is equivalent to
\[
\sqrt{a+b}+ \sqrt{c} \le \sqrt{a} + \sqrt{b+c}\quad \forall\;a\ge c\ge0, b\ge0.
\]
This is actually implied by
\[
\sqrt{a+b}-\sqrt{a} = \frac{b}{\sqrt{a+b}+\sqrt{a}}  \le  \frac{b}{\sqrt{b+c} +\sqrt{c}} = \sqrt{b+c} -\sqrt{c}.
\]
\end{proof}

The next result is well known regarding an optimal solution of maximizing a linear function over a polymatroid; see e.g.,~\cite{E03}.
\begin{lemma}\label{thm:matroid-greedy}
Consider the linear program
\begin{equation*}
\max\left\{\ba^{\T}\bx : \bx\ge{\bf0},\,\sum_{i\in\BD} x_i \le g(\BD) \mbox{ for all }  \BD \in 2^{\BI} \right\}
\end{equation*}
where $\ba \in \R^n_+$ and $g$ is a rank function. Let $\pi = (\pi_1,\pi_2,\dots, \pi_n)$ be a permutation of $\BI$ with $a_{\pi_1} \ge a_{\pi_2} \ge \dots \ge a_{\pi_n}\ge0$. An optimal solution $\bx$ to the linear program can be obtained by letting
\[
x_{\pi_i}=\left\{
\begin{array}{ll}
g(\{ \pi_i \}) & i=1\\
g(\{ \pi_1,\dots, \pi_i \}) - g(\{ \pi_1,\dots, \pi_{i-1} \}) & i=2,\dots,n.
\end{array}
\right.
\]
\end{lemma}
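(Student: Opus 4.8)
The plan is the classical Edmonds-style argument: exhibit the greedy vector as a feasible primal point, exhibit a matching feasible dual point supported on the same chain of sets, and conclude by weak LP duality. Throughout, write $\BD_i:=\{\pi_1,\dots,\pi_i\}$ for $i=0,1,\dots,n$ (so $\BD_0=\emptyset$ and, since $g$ is normalized, $g(\BD_0)=0$), and let $\bx^*$ be the vector given by the displayed greedy formula, i.e. $x^*_{\pi_i}=g(\BD_i)-g(\BD_{i-1})$ for every $i$. It is also worth recording at the outset that, for a nondecreasing set function, the submodularity inequality $g(\BD_1\cup\BD_2)+g(\BD_1\cap\BD_2)\le g(\BD_1)+g(\BD_2)$ is equivalent to the diminishing-returns inequality $g(T\cup\{e\})-g(T)\le g(S\cup\{e\})-g(S)$ whenever $S\subseteq T\subseteq\BI$ and $e\notin T$; I would use this second form.

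First I would check that $\bx^*$ is feasible. Nonnegativity is immediate from $\BD_{i-1}\subseteq\BD_i$ and monotonicity of $g$. For an arbitrary $\BD\subseteq\BI$, put $\BD'_j:=\BD\cap\BD_j$, so that $\BD'_0=\emptyset$, $\BD'_n=\BD$, and $\BD'_j\setminus\BD'_{j-1}$ is $\{\pi_j\}$ when $\pi_j\in\BD$ and empty otherwise. For each $j$ with $\pi_j\in\BD$, applying the diminishing-returns inequality to $\BD'_{j-1}\subseteq\BD_{j-1}$ and the element $\pi_j\notin\BD_{j-1}$ gives $g(\BD_j)-g(\BD_{j-1})\le g(\BD'_j)-g(\BD'_{j-1})$, while for $j$ with $\pi_j\notin\BD$ the right-hand side vanishes. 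Summing over all $j$ and telescoping the right-hand side yields
\[
\sum_{i\in\BD}x^*_i=\sum_{j:\,\pi_j\in\BD}\bigl(g(\BD_j)-g(\BD_{j-1})\bigr)\le\sum_{j=1}^{n}\bigl(g(\BD'_j)-g(\BD'_{j-1})\bigr)=g(\BD)-g(\emptyset)=g(\BD),
\]
so $\bx^*\in\BX^n$.

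Next I would form the dual of the linear program, which has one variable $y_\BD\ge0$ for every $\BD\subseteq\BI$, namely $\min\{\sum_{\BD}g(\BD)y_\BD:\ \sum_{\BD\ni i}y_\BD\ge a_i\ \text{for all }i,\ y_\BD\ge0\}$. Define the greedy dual solution on the chain $\BD_1\subseteq\BD_2\subseteq\dots\subseteq\BD_n$ by $y_{\BD_i}=a_{\pi_i}-a_{\pi_{i+1}}$ for $i=1,\dots,n-1$ and $y_{\BD_n}=a_{\pi_n}$, with $y_\BD=0$ for all other $\BD$; these are nonnegative because $a_{\pi_1}\ge a_{\pi_2}\ge\dots\ge a_{\pi_n}\ge0$. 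For a fixed index $\pi_j$, the only chain sets containing it are $\BD_j,\BD_{j+1},\dots,\BD_n$, and $\sum_{i=j}^{n-1}(a_{\pi_i}-a_{\pi_{i+1}})+a_{\pi_n}=a_{\pi_j}$ by telescoping, so every dual constraint holds with equality; in particular $\by^*$ is dual-feasible.

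Finally I would compare objective values. By Abel summation (summation by parts), using $g(\BD_0)=0$,
\[
\sum_{i=1}^{n-1}g(\BD_i)\,(a_{\pi_i}-a_{\pi_{i+1}})+g(\BD_n)\,a_{\pi_n}=\sum_{i=1}^{n}a_{\pi_i}\bigl(g(\BD_i)-g(\BD_{i-1})\bigr)=\sum_{i=1}^{n}a_{\pi_i}x^*_{\pi_i}=\ba^{\T}\bx^*,
\]
so the primal value at $\bx^*$ equals the dual value at $\by^*$. Since $\bx^*$ is primal-feasible and $\by^*$ is dual-feasible, weak duality forces $\ba^{\T}\bx^*$ to be the optimum of the linear program, which is exactly the assertion of the lemma. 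The only genuinely substantive step is the feasibility of $\bx^*$: the dual construction and the Abel-summation identity are pure bookkeeping, whereas the nested-intersection estimate is precisely where submodularity (and monotonicity and normalization) of the rank function $g$ enters, so that is the step I would write out with the most care.
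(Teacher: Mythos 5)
Your proof is correct. Note that the paper does not prove this lemma at all: it invokes it as a well-known fact about maximizing a linear function over a polymatroid and simply cites Edmonds. What you have written out is exactly the classical primal--dual greedy argument behind that citation: feasibility of the greedy point via the diminishing-returns form of submodularity and the telescoping bound over the nested intersections $\BD\cap\BD_j$, a chain-supported dual solution $y_{\BD_i}=a_{\pi_i}-a_{\pi_{i+1}}$ (with $y_{\BD_n}=a_{\pi_n}$) whose constraints hold with equality, matching objective values by Abel summation, and weak duality to conclude. All steps check out (including the identification $\BD'_j=\BD'_{j-1}\cup\{\pi_j\}$ when $\pi_j\in\BD$ and the nonnegativity of the dual variables from the sorting of $\ba$), so your write-up supplies a complete, self-contained proof of a statement the paper leaves to the literature.
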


We can now characterize extreme optimal points and upper bound the optimal value of~\eqref{eq:polymatroid}.
\begin{proposition}\label{thm:matroid-value-bound}
The optimal value of~\eqref{eq:polymatroid} is no more than $\left(\frac{\ln n+5}{4} \right) \alpha^2 $.
\end{proposition}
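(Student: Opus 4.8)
The plan is to use the fact that the strictly convex objective $\|\bx\|^2$ attains its maximum over the polytope $\BX^n$ at an extreme point, so it suffices to describe the extreme points of $\BX^n$ and evaluate $\|\bx\|^2$ on them. Combining Lemma~\ref{thm:matroid-greedy} with the standard fact (see~\cite{E03}) that every vertex of a polymatroid $P(g)$ arises as a greedy solution for some nonnegative linear objective, and using that here $g(\BD)=\alpha\sqrt{|\BD|}$ depends only on $|\BD|$, I would argue that every extreme point of $\BX^n$ is, up to a permutation of coordinates, of the form
\[
\bx=\big(\alpha(\sqrt1-\sqrt0),\,\alpha(\sqrt2-\sqrt1),\,\dots,\,\alpha(\sqrt k-\sqrt{k-1}),\,0,\dots,0\big)
\]
for some $0\le k\le n$. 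One checks directly that any such point indeed lies in $\BX^n$: its nonzero entries are nonincreasing, so for any $\BD$ with $|\BD|=d$ the sum $\sum_{i\in\BD}x_i$ is bounded by the sum of the $\min\{d,k\}$ largest entries, which telescopes to $\alpha\sqrt{\min\{d,k\}}\le\alpha\sqrt d$.

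Given this description, $\|\bx\|^2=\alpha^2\sum_{i=1}^k(\sqrt i-\sqrt{i-1})^2$, which is strictly increasing in $k$, so the optimal value of~\eqref{eq:polymatroid} equals $\alpha^2\sum_{i=1}^n(\sqrt i-\sqrt{i-1})^2$. To bound this I would peel off the $i=1$ term, which equals $1$, and for $i\ge2$ use the identity $(\sqrt i-\sqrt{i-1})^2=\frac1{(\sqrt i+\sqrt{i-1})^2}$ together with $(\sqrt i+\sqrt{i-1})^2\ge(2\sqrt{i-1})^2=4(i-1)$, giving $(\sqrt i-\sqrt{i-1})^2\le\frac1{4(i-1)}$. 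Summing the harmonic tail, $\sum_{i=2}^n\frac1{4(i-1)}=\frac14 H_{n-1}\le\frac14(1+\ln n)$, hence the optimal value is at most $\alpha^2\big(1+\frac{1+\ln n}{4}\big)=\frac{\ln n+5}{4}\,\alpha^2$, which is exactly the claim.

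The only delicate step is the extreme-point characterization; the harmonic-sum estimate afterwards is routine, the key elementary facts being the telescoping $\sum_{i=1}^k(\sqrt i-\sqrt{i-1})=\sqrt k$ and $(\sqrt i+\sqrt{i-1})^2\ge4(i-1)$. If one wished to bypass the polymatroid vertex description altogether, there is an alternative: for an arbitrary feasible $\bx$, sort its entries into $x_{(1)}\ge\dots\ge x_{(n)}\ge0$; applying the constraint to the index set of the $k$ largest entries gives $\sum_{i=1}^k x_{(i)}\le\alpha\sqrt k=\sum_{i=1}^k\alpha(\sqrt i-\sqrt{i-1})$ for every $k$, i.e.\ $\bx$ is weakly majorized by the nonincreasing vector $\bx^*:=\big(\alpha(\sqrt i-\sqrt{i-1})\big)_{i=1}^n$. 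Since $t\mapsto t^2$ is convex and nondecreasing on $[0,\infty)$, the standard majorization inequality for convex nondecreasing functions yields $\|\bx\|^2\le\|\bx^*\|^2$, and one concludes with the same estimate. I would present the polymatroid route as primary, since it fits the narrative of characterizing extreme optimal points, and mention the majorization argument as a shortcut.
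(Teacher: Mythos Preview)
Your proof is correct and follows essentially the same route as the paper: both reduce to the polymatroid structure of $\BX^n$, identify that the maximizer (up to permutation) has coordinates $\alpha(\sqrt i-\sqrt{i-1})$, and finish with the identical harmonic estimate $1+\tfrac14 H_{n-1}\le\tfrac{\ln n+5}{4}$. The only methodological difference is how the form of the optimizer is pinned down: the paper takes an optimal $\bz$, argues it is the \emph{unique} maximizer of the linear program $\max_{\bx\in\BX^n}\bz^{\T}\bx$ (otherwise a distinct optimizer $\by$ would satisfy $\|\by\|^2>\|\bz\|^2$), and hence equals the greedy solution supplied by Lemma~\ref{thm:matroid-greedy}; you instead invoke the stronger (but still standard, \cite{E03}) fact that every vertex of a polymatroid is a greedy solution, and then use symmetry of $g$. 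Your majorization alternative is a genuinely more elementary bypass of the polymatroid machinery and is worth keeping as a remark.
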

\begin{proof}
Denote $\bz$ to be an optimal solution of~\eqref{eq:polymatroid}. 
In fact, $\bz$ is the unique optimal solution to the linear program
\begin{equation}\label{prob:LP-matroid}
    \max_{\bx\in\BX^n} \bz^{\T}\bx.
\end{equation}
If this is not true, we then have another $\by \in \BX^n$ with $\by\neq\bz$ and ${\bz}^{\T} \by \ge \bz^{\T}\bz= \|\bz\|^2$. This implies that $\|\by\|^2 >\|\bz\|^2$, invalidating the optimality of $\bz$ to~\eqref{eq:polymatroid}.

Applying Lemma~\ref{thm:polymatroid} and Lemma~\ref{thm:matroid-greedy} to~\eqref{prob:LP-matroid} with $g(\BD) = \alpha\sqrt{|\BD|}$ and $\ba = \bz \in \R^n_+$ and choosing a permutation $\pi$ with $z_{\pi_1} \ge z_{\pi_2} \ge \dots \ge z_{\pi_n}\ge0$, one has
\[
z_{\pi_i}=\left\{
\begin{array}{ll}
\alpha & i=1\\
\alpha\left(\sqrt{i}-\sqrt{i-1}\right) & i=2,\dots,n.
\end{array}
\right.
\]
As a consequence,
\begin{eqnarray*}
\frac{\|\bz\|^2}{\alpha^2} =1+\sum_{i=2}^n\left(\sqrt{i}-\sqrt{i-1}\right)^2
=1+\sum_{i=2}^n\left(\frac{1}{\sqrt{i}+\sqrt{i-1}}\right)^2
\le 1 + \sum_{i=2}^n\frac{1}{4(i-1)}
 \le\frac{\ln n+5}{4},
\end{eqnarray*}
which shows that the optimal value of~\eqref{eq:polymatroid}, $\|\bz\|^2$, is upper bounded by $\left(\frac{\ln n+5}{4} \right)\alpha^2$.
\end{proof}

We are ready to finish the final piece, i.e., to show the inequality in~\eqref{eq:h2thm}. If this is not ture, then there is an $\bx \in \BS^n$ such that
$$\max_{\BD \in {2^{\BI}\setminus\{\emptyset\}}} \sum_{i \in \BD} \frac{|x_i|} {\sqrt{|\BD |}}\le \frac{2\beta }{\sqrt{\ln n+5 }} \mbox{ with } 0<\beta <1.$$
This means that $|\bx|\in\R^n_+$ with $\|\bx\|^2=1$ is a feasible solution to~\eqref{eq:polymatroid} for $\alpha = \frac{2\beta}{\sqrt{\ln n+5 }}$. However, according to Proposition~\ref{thm:matroid-value-bound}, the optimal value of this problem is no more than
\[
\left(\frac{\ln n+5}{4} \right) \alpha^2  = \left(\frac{\ln n+5}{4} \right) \frac{4\beta^2}{\ln n +5} = \beta^2 <1,
\]
giving rise to a contradiction. Finally, we conclude this part as below.
\begin{corollary}\label{thm:h2}
It holds that
\begin{equation*}
\BH^n_2\in\BT\left(n,\frac{2}{\sqrt{\ln n+5}},3^n\right).
\end{equation*}
\end{corollary}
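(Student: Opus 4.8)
The plan is to recognize that this corollary is just a repackaging of the preceding Theorem~\ref{thm:h2}, whose conclusion is precisely $\min_{\bx\in\BS^n}\max_{\bz\in\BH^n_2}\bx^{\T}\bz\ge\frac{2}{\sqrt{\ln n+5}}$, into the language of $\tau$-hitting sets. So only two elementary things remain to be checked: the cardinality bound $|\BH^n_2|\le3^n$, and the covering (``hitting'') property for $\tau=\frac{2}{\sqrt{\ln n+5}}$.

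For the cardinality, I would note that $\BH^n_2=\{\bz/\|\bz\|:\bz\in\{-1,0,1\}^n,\ \|\bz\|\neq0\}$ is the image of $\{-1,0,1\}^n\setminus\{{\bf0}\}$, a set of $3^n-1$ elements, under normalization; hence $|\BH^n_2|\le 3^n-1<3^n$, which already suffices. (If one wants the exact count, normalization is injective on $\{-1,0,1\}^n\setminus\{{\bf0}\}$: two such vectors with the same image must be positive scalar multiples of each other, and comparing any nonzero---hence $\pm1$---coordinate forces the scalar to be $1$; so in fact $|\BH^n_2|=3^n-1$.)

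For the hitting property, I would unwind the definition: $\BH^n$ is a $\tau$-hitting set exactly when every $\bx\in\BS^n$ lies in some cap $\BB^n(\bz,\tau)$ with $\bz\in\BH^n$, i.e.\ when $\max_{\bz\in\BH^n}\bx^{\T}\bz\ge\tau$ for all $\bx\in\BS^n$, which is the same as $\min_{\bx\in\BS^n}\max_{\bz\in\BH^n}\bx^{\T}\bz\ge\tau$. Applying this with $\BH^n=\BH^n_2$ and $\tau=\frac{2}{\sqrt{\ln n+5}}$, the required inequality is exactly the conclusion of Theorem~\ref{thm:h2}. Combining the two verifications then yields $\BH^n_2\in\BT\bigl(n,\frac{2}{\sqrt{\ln n+5}},3^n\bigr)$.

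There is no real obstacle remaining: all the substance---the polymatroid structure of $\BX^n$ (Lemma~\ref{thm:polymatroid}), the greedy optimal solution of a linear program over a polymatroid (Lemma~\ref{thm:matroid-greedy}), the resulting bound $\|\bz\|^2\le\frac{\ln n+5}{4}\alpha^2$ of Proposition~\ref{thm:matroid-value-bound}, and the contradiction argument converting that bound into the inequality of Theorem~\ref{thm:h2}---has already been carried through. The corollary is merely the bookkeeping step translating the min--max quantity into membership in $\BT(n,\tau,m)$, together with a one-line count of sign vectors.
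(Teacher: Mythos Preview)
Your proposal is correct and follows essentially the same approach as the paper: the paper treats the corollary as an immediate consequence, having already noted that $|\BH^n_2|<3^n$ and having established the min--max inequality in Theorem~\ref{thm:h2}, with the corollary merely restating these facts in the language of $\BT(n,\tau,m)$. Your explicit unwinding of the definition of a $\tau$-hitting set and the cardinality count are exactly the bookkeeping the paper leaves implicit.
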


\subsection{$\Omega(1)$-hitting sets}\label{sec:h3}

$\BH^n_2$ is simple and almost close to help the construction of deterministic $\Omega\left(\sqrt{\frac{\ln n}{n}}\right)$-hitting sets, whose story will be revealed in the next subsection. To make this final small but important step, $\Omega(1)$-hitting sets are needed. A finely tuned version of $\BH^n_2$ is in place.

\begin{algorithm}\label{alg:h3}
Given $\BS^n$ and two parameters $\alpha\ge1$ and $\beta\ge\alpha+1$, construct $\BH^n_3(\alpha,\beta)$.

\vspace{-0.2cm}\noindent\hrulefill
\begin{enumerate}
\item Let $m=\left\lceil\log_\beta \alpha n \right\rceil$ and partition $\BI$ into disjoint subsets $\I_1,\I_2,\dots,\I_m$ such that
\begin{equation}\label{eq:partition}
|\I_1|=n-\sum_{k=2}^m|\I_k| \mbox{ and } |\I_k|=\left\lfloor \frac{\alpha n}{\beta^{k-1}} \right\rfloor
\mbox{ for } k=2,3\dots, m.
\end{equation}
\item For any partition $\{\I_1,\I_2,\dots,\I_m\}$ of $\BI$ satisfying~\eqref{eq:partition}, construct a set of vectors
\[\BZ^n(\I_1,\I_2,\dots,\I_m)=\left\{\bz\in\R^n: z_i\in\left\{\pm1,\pm \beta^{\frac{k-1}{2}}\right\} \mbox{ if } i\in\I_k \mbox{ for } k=1,2,\dots,m \right\}.\]
\item Put all $\BZ^n(\I_1,\I_2,\dots,\I_m)$'s together to form\[\BZ^n=\bigcup_{\{\I_1,\I_2,\dots,\I_m\}\mbox{ is a partition of $\BI$ satisfying~\eqref{eq:partition}}}\BZ^n(\I_1,\I_2,\dots,\I_m).\]
\item Project the vectors in $\BZ^n$ onto the unit sphere, i.e.,
$\BH^n_3(\alpha,\beta):=\left\{\frac{\bz}{\|\bz\|}\in\BS^n: \bz\in \BZ^n\right\}.$
\end{enumerate}
\vspace{-0.2cm}\hrulefill
\end{algorithm}

We see from the first step of Algorithm~\ref{alg:h3} that
\[
\sum_{k=2}^m |\I_k| \le \sum_{k=2}^m \frac{\alpha n}{\beta^{k-1}} =\frac{\alpha n}{\beta-1}-\frac{\alpha n}{\beta^{m-1}(\beta-1)}\le\frac{\alpha n}{\beta-1},
\]
implying that
\begin{equation}\label{eq:i1}
|\I_1|=|\BI|-\sum_{k=2}^m |\I_k|\ge n-\frac{\alpha n}{\beta-1}=\left(1-\frac{\alpha}{\beta-1}\right)n \ge0.
\end{equation}
Therefore, the feasibility of the construction is guaranteed. On the other hand, as $m=\left\lceil\log_\beta \alpha n \right\rceil$, we have
\[
\sum_{k=2}^m (|\I_k|+1)\ge \sum_{k=2}^m \frac{\alpha n}{\beta^{k-1}}
= \frac{\alpha n}{\beta-1}-\frac{\alpha n}{\beta^{m-1}(\beta-1)}
\ge \frac{\alpha n}{\beta-1}-\frac{\beta}{\beta-1} \ge\frac{\alpha n}{\beta-1} -2,
\]
implying that
\begin{equation}\label{eq:i2}
|\I_1|=n-\sum_{k=2}^m |\I_k| = n+m-1 -\sum_{k=2}^m (|\I_k|+1) \le n+\log_\beta \alpha n-\frac{\alpha n}{\beta-1} +2
\le \left(1-\frac{\alpha}{\beta-1}\right)n +\log_\beta n+3.
\end{equation}



\begin{theorem}
For any $\bx\in\BS^n$, there exists $\bz\in\BH^n_3(\alpha,\beta)$ such that $\bz^{\T}\bx\ge \frac{\alpha-1}{\sqrt{\alpha\beta(\alpha+1)}}$.
\end{theorem}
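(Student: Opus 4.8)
The plan is to mimic the structure of the $\BH^n_2$ analysis but exploit the explicit scale-and-partition structure of $\BH^n_3(\alpha,\beta)$. Given $\bx\in\BS^n$, the first step is to reduce to the nonnegative orthant: since the sign patterns $\pm1$ and $\pm\beta^{(k-1)/2}$ range over all combinations, for each fixed partition we may replace $\bx$ by $|\bx|=(|x_1|,\dots,|x_n|)$ and seek $\bz\in\BZ^n(\I_1,\dots,\I_m)$ with all coordinates nonnegative so that $\bz^{\T}|\bx|$ is large, and then $\bz^{\T}\bx/\|\bz\|\ge \bz^{\T}|\bx|/\|\bz\|$ after adjusting signs. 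So it suffices to lower bound $\max \frac{\sum_i z_i|x_i|}{\|\bz\|}$ where $z_i\in\{1,\beta^{(k-1)/2}\}$ for $i\in\I_k$.

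The key combinatorial move is a greedy/averaging argument over partitions: I would choose the partition $\{\I_1,\dots,\I_m\}$ (among those satisfying~\eqref{eq:partition}) that places the coordinates of $\bx$ with the largest absolute values into the blocks carrying the largest weights $\beta^{(k-1)/2}$. Concretely, sort $|x_1|\ge|x_2|\ge\cdots\ge|x_n|$, put the top $|\I_m|$ indices into $\I_m$, the next $|\I_{m-1}|$ into $\I_{m-1}$, and so on, with the remaining (smallest) block going to $\I_1$. Then pick $z_i=\beta^{(k-1)/2}$ for $i\in\I_k$. The numerator becomes $\sum_{k=1}^m \beta^{(k-1)/2}\sum_{i\in\I_k}|x_i|$ and the denominator is $\|\bz\|=\sqrt{\sum_{k=1}^m |\I_k|\beta^{k-1}}$. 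The idea is that $\sum_{i\in\I_k}|x_i|\ge \frac{|\I_k|}{n}\cdot(\text{something}) $ is too weak; instead I want to use that the sorted assignment makes $\sum_{i\in\I_k}|x_i|$ at least the "fair share" in a rearrangement-inequality sense, and crucially to bound things via Cauchy–Schwarz in a way that cancels the geometric factors. A cleaner route: by Cauchy–Schwarz, $\sum_i z_i|x_i|\le \|\bz\|$ always, so that gives an upper bound, not what we want; so instead I would directly estimate $\sum_{k}\beta^{(k-1)/2}\sum_{i\in\I_k}|x_i|$ from below using $\sum_{i\in\I_k}|x_i|\ge$ (contribution forced by sortedness) and compare with $\|\bz\|^2=\sum_k|\I_k|\beta^{k-1}$, whose size is controlled by~\eqref{eq:i1} and~\eqref{eq:i2} — note $|\I_k|\beta^{k-1}\approx \alpha n$ for $k\ge 2$ and $|\I_1|\le(1-\frac{\alpha}{\beta-1})n+O(\log n)$, so $\|\bz\|^2\le \alpha n\cdot m + (1-\frac{\alpha}{\beta-1})n+O(\log n)$; but actually the target bound $\frac{\alpha-1}{\sqrt{\alpha\beta(\alpha+1)}}$ has no $\ln n$ in it, which tells me the denominator must be handled more carefully, probably by a per-block normalization rather than a global one.

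Reflecting on the target constant $\frac{\alpha-1}{\sqrt{\alpha\beta(\alpha+1)}}$: the $\sqrt{\alpha+1}$ hints that within a single block $\I_k$ one uses the $\BH$-type bound that a scaled $\{1,\beta^{(k-1)/2}\}$-vector on $|\I_k|$ coordinates hits any unit vector supported there with inner product $\Omega(1)$, and the ratio $\beta^{(k-1)/2}$ versus $1$ contributes the $\alpha,\beta$ dependence. So the real plan is: decompose $\bx = \bigoplus_k \bx^{(k)}$ according to the (sorted) partition, let $w_k=\|\bx^{(k)}\|$ so $\sum_k w_k^2=1$; find the block $k^\star$ maximizing a suitable weighted quantity; and on that block show the $\{\pm1,\pm\beta^{(k^\star-1)/2}\}$ vectors restricted there form an $\Omega(1)$-hitting set for the unit sphere of that coordinate subspace, with ratio exactly $\frac{\alpha-1}{\sqrt{\alpha\beta(\alpha+1)}}\cdot\frac{1}{w_{k^\star}}$ up to the geometric weighting, so that multiplying by $w_{k^\star}$ and using $w_{k^\star}^2\ge$ (fair share by sortedness) closes the bound. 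The main obstacle I anticipate is precisely this coupling between blocks: showing that the \emph{sorted} assignment guarantees the winning block $k^\star$ both captures enough of $\bx$'s mass ($w_{k^\star}$ not too small) and has small enough $\|\bz^{(k^\star)}\|^2=|\I_{k^\star}|\beta^{k^\star-1}$, simultaneously — this requires a careful choice of which block to call $k^\star$ (likely an averaging argument: some $k$ must have $w_k^2\,/\,(\text{block size factor})$ above the average) and then verifying that the arithmetic with $|\I_k|=\lfloor \alpha n/\beta^{k-1}\rfloor$ and $|\I_1|\approx(1-\frac{\alpha}{\beta-1})n$ yields exactly $\frac{\alpha-1}{\sqrt{\alpha\beta(\alpha+1)}}$ after the floors and the $+O(\log n)$ slack are absorbed. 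I would expect to need $n$ large enough (or the statement to implicitly assume it) so that the floor corrections and the $\log_\beta n+3$ term in~\eqref{eq:i2} are dominated.
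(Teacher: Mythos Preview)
Your proposal misses the key structural idea and, as a result, runs into the $\log n$ obstacle you yourself flag. The paper does \emph{not} sort the coordinates of $\bx$ and then pour the largest ones into $\I_m$, the next into $\I_{m-1}$, etc. Instead it partitions the indices by \emph{level sets} of $|x_i|$: set
\[
\BD_0(\bx)=\Big\{i:|x_i|\le \tfrac{1}{\sqrt{\alpha n}}\Big\},\qquad
\BD_k(\bx)=\Big\{i:\sqrt{\tfrac{\beta^{k-1}}{\alpha n}}<|x_i|\le\sqrt{\tfrac{\beta^k}{\alpha n}}\Big\}\ \ (k\ge1),
\]
observe that $\sum x_i^2=1$ forces $|\BD_k(\bx)|\le\lfloor\alpha n/\beta^{k-1}\rfloor=|\I_k|$ for $k\ge2$, so a valid partition $\{\I_1,\dots,\I_m\}$ exists with $\BD_k(\bx)\subseteq\I_k$, and then take the \emph{single} vector $z_i=\operatorname{sign}(x_i)\beta^{(k-1)/2}$ for $i\in\BD_k(\bx)$ ($k\ge1$) and $z_i=\operatorname{sign}(x_i)$ for $i\in\BD_0(\bx)$. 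The point is that on each index $i\in\BD_k$ with $k\ge1$ one has $z_i^2=\beta^{k-1}<\alpha n\,x_i^2$, so $\sum_{k\ge1}\sum_{i\in\BD_k}z_i^2<\alpha n$ and $\|\bz\|^2\le(\alpha+1)n$ \emph{with no $\log n$ factor}. Likewise $\bz^{\T}\bx\ge\sum_{k\ge1}\sum_{i\in\BD_k}\beta^{(k-1)/2}|x_i|\ge\sqrt{\alpha n/\beta}\sum_{k\ge1}\sum_{i\in\BD_k}x_i^2\ge\sqrt{\alpha n/\beta}\,(1-1/\alpha)$, because the mass in $\BD_0$ is at most $1/\alpha$. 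Dividing gives exactly $\frac{\alpha-1}{\sqrt{\alpha\beta(\alpha+1)}}$.

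Your sorted assignment with $z_i=\beta^{(k-1)/2}$ on all of $\I_k$ produces $\|\bz\|^2=\sum_k|\I_k|\beta^{k-1}\approx\alpha n\cdot m\approx\alpha n\log_\beta(\alpha n)$, which is precisely the spurious $\ln n$ you noticed; the level-set choice avoids this because the weight $\beta^{(k-1)/2}$ is attached to a coordinate only when $|x_i|$ is genuinely of that scale, so $z_i^2$ is controlled by $x_i^2$. Your fallback ``pick one good block $k^\star$'' idea is a further detour: a single block cannot in general capture a constant fraction of $\|\bx\|^2$ independent of $n$, so the averaging argument you sketch will not recover the stated constant. Finally, there is no large-$n$ assumption needed---the level-set proof gives the bound exactly, with no floors or $O(\log n)$ slack to absorb.
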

\begin{proof}
For any given $\|\bx\| = 1$, define the index sets
\begin{align}
\BD_0(\bx)&=\left\{ i \in \BI : |x_i| \le \frac{1}{\sqrt{\alpha n}} \right\} \nonumber \\
\BD_k(\bx)&=\left\{ i \in \BI : \sqrt{\frac{\beta^{k-1}}{\alpha n}}  < |x_i| \le \sqrt{\frac{\beta^k}{\alpha n}} \right\}\quad k=1,2, \dots, m. \label{eq:ekx}
\end{align}
For any entry $x_i$ of $\bx$, $|x_i|\le 1\le \sqrt{\frac{\beta^m}{\alpha n}}$, and so $\{\BD_0(\bx),\BD_1(\bx),\dots,\BD_m(\bx)\}$ is a partition of $\BI$.

We first estimate $|\BD_k(\bx)|$ for $k\ge2$. {\color{black}It is} obvious that for $k\ge2$,
\[
\frac{\beta^{k-1}}{\alpha n}  |\BD_k(\bx)|= \sum_{i \in \BD_k(\bx)}\frac{\beta^{k-1}}{\alpha n} < \sum_{i \in \BD_k(\bx)}|x_i|^2 \le 1.
\]
This implies that $|\BD_k(\bx)| < \frac{\alpha n}{\beta^{k-1}}$, i.e., $|\BD_k(\bx)| \le\left\lfloor \frac{\alpha n}{\beta^{k-1}}\right\rfloor$ for $k=2,3, \dots, m$. Hence, there exists a partition $\{\I_1,\I_2,\dots,\I_m\}$ of $\BI$ satisfying~\eqref{eq:partition} such that $\BD_k(\bx)\subseteq \I_k$ for $k=2,3, \dots, m$. Furthermore, we may find a vector $\bz\in \BZ^n(\I_1,\I_2,\dots,\I_m)$ such that
\[
z_i = \left\{
\begin{array}{ll}
\sign(x_i) & i \in {\color{black}\BD_0(\bx)} \\
\sign(x_i)\beta^{\frac{k-1}{2}}  & i\in \BD_k(\bx) \mbox{ for } {\color{black}k=1,2,\dots,m,}
\end{array}
\right.
\]
where the $\sign$ function takes $1$ for nonnegative reals and $-1$ for negative reals.

In the following, we shall estimate $\bz^{\T}\bx$ and $\|\bz\|$. First of all,
\[
\sum_{i \in \BD_0(\bx)}{x_i}^2 \le \sum_{i \in \BD_0(\bx)}\frac{1}{\alpha n} \le \frac{1}{\alpha} \mbox{ and } \sum_{k=1}^{m} \sum_{i \in \BD_k(\bx)} {x_i}^2 = \sum_{i\in\BI} {x_i}^2 -  \sum_{i \in \BD_0(\bx)}{x_i}^2 \ge 1-\frac{1}{\alpha}.
\]

Next, we have
\[
\sum_{i \in \BD_0(\bx)}{z_i}^2 = |\BD_0(\bx)| \le n  \mbox{ and }
\sum_{k=1}^{m} \sum_{i \in \BD_k(\bx)} {z_i}^2
=\sum_{k=1}^{m} \sum_{i \in \BD_k(\bx)} \beta^{k-1}
< \sum_{k=1}^{m} \sum_{i \in \BD_k(\bx)} \alpha n{x_i}^2 \le \alpha n.
\]
Summing the above two inequalities would give us $\|\bz\|^2\le (\alpha+1)n$ since $\{\BD_0(\bx),\BD_1(\bx),\dots,\BD_m(\bx)\}$ is a partition of $\BI$.

Lastly, as $\sign(x_i)=\sign(z_i)$ for every $i\in\BI$,
\[
\bz^{\T} \bx \ge \sum_{k=1}^{m} \sum_{i \in \BD_k(\bx)} \beta^{\frac{k-1}{2}} |x_i|
\ge \sum_{k=1}^{m} \sum_{i \in \BD_k(\bx)} \sqrt{\frac{\alpha n}{\beta}}|x_i|^2
=  \sqrt{\frac{\alpha n}{\beta}} \sum_{k=1}^{m} \sum_{i \in \BD_k(\bx)} |x_i|^2
\ge \sqrt{\frac{\alpha n}{\beta}} \left(1-\frac{1}{\alpha}\right),
\]
where the second inequality is due to the upper bound in~\eqref{eq:ekx}.

To conclude, we find $\frac{\bz}{\|\bz\|}\in\BH^n_3(\alpha,\beta)$ such that
\[
\bx^{\T} \frac{\bz}{\|\bz\|} \ge \sqrt{\frac{\alpha n}{\beta}} \left(1-\frac{1}{\alpha}\right) \cdot\frac{1}{\sqrt{(\alpha+1)n}} = \frac{\alpha-1}{\sqrt{\alpha\beta(\alpha+1)}},
\]
proving the desired inequality.
\end{proof}

We also need to estimate the cardinality of $\BH^n_3(\alpha,\beta)$. To simplify the display, we now replace $\beta$ by $\gamma+1$ where $\gamma\ge\alpha$ in the rest of this subsection.
\begin{proposition}
Given two constants $1\le\alpha\le\gamma$, one has
\begin{equation} \label{eq:t}
\left|\BH^n_3(\alpha,\gamma+1)\right|\le \left( 2^{\frac{\gamma+\alpha}{\gamma}}\alpha^{-\frac{\alpha}{\gamma}} (\gamma+1)^{\frac{\alpha(\gamma+1)}{\gamma^2}} \left(\frac{\gamma}{\gamma-\alpha}\right)^{\frac{\gamma-\alpha}{\gamma}}+o(1)\right)^n.
\end{equation}
\end{proposition}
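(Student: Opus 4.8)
The plan is to estimate $|\BH^n_3(\alpha,\gamma+1)|$ by bounding the number of admissible partitions $\{\I_1,\dots,\I_m\}$ and, for each, the number of sign/scale choices in $\BZ^n(\I_1,\dots,\I_m)$. First I would observe that $|\BZ^n(\I_1,\dots,\I_m)|=4^n$ exactly, since for each of the $n$ coordinates there are four choices $\{\pm1,\pm\gamma^{(k-1)/2}\dots\}$ — wait, actually the scaling depends on the block, but the \emph{count} per coordinate is always $4$, so the per-partition contribution is $4^n$ regardless of the partition. Hence $|\BH^n_3(\alpha,\gamma+1)|\le N\cdot 4^n$ where $N$ is the number of partitions of $\BI$ into ordered blocks of the prescribed sizes $|\I_1|,\dots,|\I_m|$ from~\eqref{eq:partition}. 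The block sizes are fixed once $n$ is fixed, so $N=\binom{n}{|\I_1|,|\I_2|,\dots,|\I_m|}=\dfrac{n!}{|\I_1|!\,|\I_2|!\cdots|\I_m|!}$.

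Next I would bound this multinomial coefficient. The key is that $|\I_k|\le \alpha n/\gamma_\beta^{\,k-1}$ with $\gamma_\beta=\gamma+1$ (using $\beta=\gamma+1$) decays geometrically, and $|\I_1|\le\left(1-\frac{\alpha}{\gamma}\right)n+\log_{\gamma+1}n+3$ by~\eqref{eq:i2} (since $\beta-1=\gamma$). Writing the multinomial coefficient via Stirling's formula, $\log N = n\log n - \sum_{k=1}^m |\I_k|\log|\I_k| + O(\log n \cdot \text{stuff})$, and substituting $|\I_1|\approx\left(1-\tfrac\alpha\gamma\right)n$, $|\I_k|\approx \alpha n(\gamma+1)^{-(k-1)}$ for $k\ge2$, I would compute the leading term. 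The sum $\sum_{k\ge2}|\I_k|\log|\I_k|$ telescopes into a geometric-type series: with $c_k=\alpha(\gamma+1)^{-(k-1)}$ one gets $\sum_{k\ge2} c_k n(\log n + \log c_k)$, where $\sum_{k\ge2}c_k = \alpha/\gamma$ and $\sum_{k\ge2}c_k\log c_k$ is a convergent series evaluating to a closed form in $\alpha,\gamma$. Collecting: the coefficient of $n\log n$ in $\log N$ is $1 - (1-\tfrac\alpha\gamma) - \tfrac\alpha\gamma = 0$, as it must be, and the coefficient of $n$ is $-(1-\tfrac\alpha\gamma)\log(1-\tfrac\alpha\gamma) - \sum_{k\ge2}c_k\log c_k$, which one checks equals $\log\!\left(2^{(\gamma+\alpha)/\gamma}\alpha^{-\alpha/\gamma}(\gamma+1)^{\alpha(\gamma+1)/\gamma^2}\left(\tfrac{\gamma}{\gamma-\alpha}\right)^{(\gamma-\alpha)/\gamma}\right) - \log 4$; adding back the $\log 4$ from the $4^n$ factor yields exactly the claimed base, with the lower-order Stirling corrections and the $\log_{\gamma+1}n+3$ slack in $|\I_1|$ absorbed into the $o(1)$ in the exponent.

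The main obstacle — really the only nonroutine point — is the bookkeeping for the geometric sums and verifying that the constant that falls out of $-(1-\tfrac\alpha\gamma)\log(1-\tfrac\alpha\gamma)-\sum_{k\ge2}c_k\log c_k+\log 4$ matches the stated product $2^{(\gamma+\alpha)/\gamma}\alpha^{-\alpha/\gamma}(\gamma+1)^{\alpha(\gamma+1)/\gamma^2}\left(\tfrac{\gamma}{\gamma-\alpha}\right)^{(\gamma-\alpha)/\gamma}$. Concretely, $\sum_{k\ge2}c_k\log c_k = \log\alpha\sum_{k\ge2}c_k - \log(\gamma+1)\sum_{k\ge2}(k-1)c_k = \tfrac\alpha\gamma\log\alpha - \alpha\log(\gamma+1)\sum_{j\ge1}j(\gamma+1)^{-j}$, and $\sum_{j\ge1}j(\gamma+1)^{-j} = \tfrac{\gamma+1}{\gamma^2}$, so this term is $\tfrac\alpha\gamma\log\alpha - \tfrac{\alpha(\gamma+1)}{\gamma^2}\log(\gamma+1)$; negating and combining with $-(1-\tfrac\alpha\gamma)\log(1-\tfrac\alpha\gamma)=\tfrac{\gamma-\alpha}{\gamma}\log\tfrac{\gamma}{\gamma-\alpha}$ and $+\log 4 = \tfrac{\gamma+\alpha}{\gamma}\log 2 + \tfrac{\gamma-\alpha}{\gamma}\log 2$ — one must be slightly careful splitting the $\log 4$ to make the $2$-exponent come out as $\tfrac{\gamma+\alpha}{\gamma}$ — gives precisely $\log$ of the target. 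I would also need to handle edge cases ($\gamma=\alpha$, where $|\I_1|$ may be $0$ up to the logarithmic slack; the formula's $\left(\tfrac{\gamma}{\gamma-\alpha}\right)^{(\gamma-\alpha)/\gamma}\to1$ there, consistent since $x^x\to1$ as $x\to0$) and justify that replacing the floors/ceilings in~\eqref{eq:partition} and the $+\log_{\gamma+1}n+3$ correction by their leading terms only perturbs $\log N$ by $O(\log^2 n)=o(n)$.
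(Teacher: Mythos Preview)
Your overall strategy (multinomial count times per-coordinate choices, then Stirling) matches the paper's, and your multinomial computation is correct. But there is a genuine error in the per-coordinate count that makes your final constant too large.

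You assert that for every coordinate there are four choices in $\BZ^n(\I_1,\dots,\I_m)$, so $|\BZ^n(\I_1,\dots,\I_m)|=4^n$. This is false for $k=1$: the set $\{\pm 1,\pm\beta^{(k-1)/2}\}$ collapses to $\{-1,1\}$ when $k=1$, so coordinates in $\I_1$ have only two choices. The correct count is $2^{|\I_1|}\prod_{k=2}^m 4^{|\I_k|}=2^{-|\I_1|}4^n$, and using $|\I_1|\ge\bigl(1-\tfrac{\alpha}{\gamma}\bigr)n$ from~\eqref{eq:i1} this is at most $2^{(1+\alpha/\gamma)n}$, which is exactly where the exponent $\tfrac{\gamma+\alpha}{\gamma}$ on $2$ comes from. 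With your $4^n$ you carry an extra factor of $2^{(\gamma-\alpha)n/\gamma}$, and your attempt to ``split $\log 4=\tfrac{\gamma+\alpha}{\gamma}\log 2+\tfrac{\gamma-\alpha}{\gamma}\log 2$'' does not dispose of the second summand: it is a positive term with nothing to cancel against, so your bound is genuinely larger than the claimed one by the base factor $2^{(\gamma-\alpha)/\gamma}$, which is not $o(1)$ when $\alpha<\gamma$.

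Once you correct the $\I_1$ count, the rest of your argument (the geometric-series evaluation of $\sum c_k\log c_k$, the Stirling bookkeeping, and the $O(\log^2 n)=o(n)$ treatment of floors and the $\log_{\gamma+1}n+3$ slack) lines up with the paper's proof essentially verbatim.
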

\begin{proof}
For any given partition $\{\I_1,\I_2,\dots,\I_m\}$ of $\BI$ satisfying~\eqref{eq:partition}, $x_i$ can take 2 values if $i\in\I_1$ and 4 values if $i\in\I_k$ for $k\ge2$. By considering the number of such partitions and possible overlaps after projecting on to $\BS^n$, one has
\[
\left|\BH^n_3(\alpha,\gamma+1)\right| \le \frac{n!}{\prod_{k=1}^m|\I_k|!}2^{|\I_1|}\prod_{k=2}^m4^{|\I_k|}.
\]
By~\eqref{eq:i1}, we have
\[
2^{|\I_1|}\prod_{k=2}^m4^{|\I_k|}  = 2^{|\I_1|}4^{\sum_{k=2}^m|\I_k|}= 2^{-|\I_1|}4^{\sum_{k=1}^m|\I_k|} \le 2^{-(1-\frac{\alpha}{\gamma})n} 4^n = 2^{(1+\frac{\alpha}{\gamma})n}.
\]
It remains to estimate $\frac{n!}{\prod_{k=1}^m|\I_k|!}$ based on the
followings from~\eqref{eq:partition},~\eqref{eq:i1} and~\eqref{eq:i2} with $\beta = \gamma+1$,
\[
{\color{black}\eta_1}n \le |\I_1|\le {\color{black}\eta_1}n +\ln_{\gamma+1} n+3
\mbox{ and } |\I_k|=\left\lfloor {\color{black}\eta_k}n \right\rfloor \mbox{ for }k=2,3\dots,m,
\]
where $\eta_1=1-\frac{\alpha}{\gamma}$ and $\eta_k=\frac{\alpha}{(\gamma+1)^{k-1}}$ for $k=2,3\dots,m$, as well as $\sum_{k=1}^m|\I_k|=n$, $m=\left\lceil\log_{\gamma+1} \alpha n \right\rceil$ and $1\le\alpha\le\gamma$.

We first notice that
$$
  \left|n-\sum_{k=1}^m\eta_k n\right|=\left|\sum_{k=1}^m|\I_k|- \sum_{k=1}^m\eta_k n \right|
\le \left||\I_1| - \eta_1 n \right| + \sum_{k=2}^m\left||\I_k| - \eta_k n \right|\le O(\ln n)+m-1=O(\ln n).
$$
We then consider the function $f(x)=x \ln x - x$, which is increasing and convex over $[1,\infty)$ since $f'(x)=\ln x$ and $f''(x)=\frac{1}{x}$. Therefore, for any $x\ge1$ and $y>0$,
$\frac{f(x+y)-f(x)}{y}\le f'(x+y)=\ln(x+y)$, implying that $f(x+y)-f(x)\le y \ln (x+y)$. Applying this fact to $|\I_k|$ for $k=1,2,\dots,m$, we obtain
\[
f(|\I_1|)-f(\eta_1n) = O(\ln^2 n) \mbox{ and } f(\eta_kn) - f(|\I_k|)= O(\ln n) \mbox{ for } k=2,3,\dots,m.
\]
These, together with the Stirling approximation $\ln(n!)=f(n)+O(\ln n)$, lead to
\[
\ln(|\I_1|!)=f(\eta_1n)+ O(\ln^2 n)
\mbox{ and }
\ln(|\I_k|!)=f(\eta_kn) + O(\ln n) \mbox{ for } k=2,3,\dots,m.
\]

We are ready to estimate $\frac{n!}{\prod_{k=1}^m|\I_k|!}$ within a deviation of $o(n)$ as follows:
\begin{align*}
\ln \frac{n!}{\prod_{k=1}^m|\I_k|!} &= \ln (n!) - \sum_{k=1}^{m} \ln (|\I_k|!)\\	
&=   n\ln n - n - \sum_{k=1}^{m} \eta_kn \ln (\eta_kn) + \sum_{k=1}^{m} \eta_kn + o(n)\\
&=   \left(n-  \sum_{k=1}^{m} \eta_kn\right)(\ln n-1) - \sum_{k=2}^{m} \eta_kn \ln \eta_k - \eta_1n \ln \eta_1 + o(n)\\
&=   - n \sum_{k=2}^{m} \frac{\alpha}{(\gamma+1)^{k-1}} \ln \frac{\alpha}{(\gamma+1)^{k-1}} -n \frac{\gamma-\alpha}{\gamma} \ln \frac{\gamma-\alpha}{\gamma}  + o(n)\\
&=  - n\sum_{k=2}^{m} \frac{\alpha \ln \alpha}{(\gamma +1)^{k-1}} + n\sum_{k=2}^{m}  \frac{\alpha(k-1)}{(\gamma +1)^{k-1}}\ln (\gamma +1) +\left( \frac{\gamma - \alpha}{\gamma} \ln \frac{\gamma}{\gamma - \alpha}\right) n + o(n)\\	
&= -\left(\frac{\alpha}{\gamma}\ln\alpha \right) n+ \left(\frac{\alpha(\gamma+1)}{\gamma^2}\ln(\gamma+1) \right)n + \left( \frac{\gamma - \alpha}{\gamma} \ln \frac{\gamma}{\gamma - \alpha}\right) n  + o(n),
\end{align*}
where the last equality is due to the fact that $m=\left\lceil\log_{\gamma+1} \alpha n \right\rceil$ implies
\begin{align*}
\sum_{k=2}^{m} \frac{1}{(\gamma +1)^{k-1}} &= \frac{1}{\gamma} -\frac{1}{\gamma(\gamma+1)^{m-2}}=\frac{1}{\gamma}+o(1)\\
\sum_{k=2}^{m} \frac{k-1}{(\gamma +1)^{k-1}} &= \frac{\gamma+1}{\gamma^2} -\frac{m}{\gamma^2(\gamma+1)^{m-2}} + \frac{m-1}{\gamma^2(\gamma+1)^{m-1}} =\frac{\gamma+1}{\gamma^2}+o(1).
\end{align*}

Finally, by combining the upper bound of $2^{|\I_1|}\prod_{k=2}^m4^{|\I_k|}$, we have $|\BH^n_3(\alpha,\gamma+1)|\le t^n$ where
\[
t = 2^{\frac{\gamma+\alpha}{\gamma}}\alpha^{-\frac{\alpha}{\gamma}} (\gamma+1)^{\frac{\alpha(\gamma+1)}{\gamma^2}} \left(\frac{\gamma}{\gamma-\alpha}\right)^{\frac{\gamma-\alpha}{\gamma}}e^{o(1)}= 2^{\frac{\gamma+\alpha}{\gamma}}\alpha^{-\frac{\alpha}{\gamma}} (\gamma+1)^{\frac{\alpha(\gamma+1)}{\gamma^2}} \left(\frac{\gamma}{\gamma-\alpha}\right)^{\frac{\gamma-\alpha}{\gamma}}+o(1).
\]
\end{proof}

In a {\color{black}nutshell}, we have the following.
\begin{corollary}\label{thm:h3col}
For any given $1\le\alpha\le\gamma$,
\begin{equation}\label{eq:h3}
\BH^n_3(\alpha,\gamma+1) \in \BT\left(n, \frac{\alpha-1}{\sqrt{\alpha(\alpha+1)(\gamma+1)}}, \left( 2^{\frac{\gamma+\alpha}{\gamma}}\alpha^{-\frac{\alpha}{\gamma}} (\gamma+1)^{\frac{\alpha(\gamma+1)}{\gamma^2}} \left(\frac{\gamma}{\gamma-\alpha}\right)^{\frac{\gamma-\alpha}{\gamma}}+o(1) \right)^n \right).
\end{equation}
\end{corollary}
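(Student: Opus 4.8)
The plan is to assemble this corollary directly from the two results already established for the construction of Algorithm~\ref{alg:h3}: the preceding Theorem, which supplies the inner-product (angular) guarantee for $\BH^n_3(\alpha,\beta)$, and the preceding Proposition, whose bound~\eqref{eq:t} controls the cardinality of $\BH^n_3(\alpha,\gamma+1)$. No new argument is needed beyond the substitution $\beta=\gamma+1$ and a short check that the resulting number $\tau$ is an admissible hitting ratio.

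First I would unwind the definition of a hitting set: $\BH^n\in\BT(n,\tau,m)$ means $\BH^n\subseteq\BS^n$, $|\BH^n|\le m$, and $\bigcup_{\bv\in\BH^n}\BB^n(\bv,\tau)=\BS^n$. The preceding Theorem asserts that for every $\bx\in\BS^n$ there is some $\bz\in\BH^n_3(\alpha,\beta)$ with $\bz^{\T}\bx\ge\frac{\alpha-1}{\sqrt{\alpha\beta(\alpha+1)}}$, and by the very definition of a spherical cap this is precisely the statement $\bx\in\BB^n\!\big(\bz,\frac{\alpha-1}{\sqrt{\alpha\beta(\alpha+1)}}\big)$. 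Since $\bx$ was arbitrary, the caps of angular radius $\arccos\frac{\alpha-1}{\sqrt{\alpha\beta(\alpha+1)}}$ centered at the points of $\BH^n_3(\alpha,\beta)$ cover $\BS^n$; that is, $\BH^n_3(\alpha,\beta)$ is a $\frac{\alpha-1}{\sqrt{\alpha\beta(\alpha+1)}}$-hitting set. Setting $\beta=\gamma+1$ — legitimate because $\gamma\ge\alpha$ forces $\beta\ge\alpha+1$, the standing requirement on the parameters of Algorithm~\ref{alg:h3} — turns this ratio into $\frac{\alpha-1}{\sqrt{\alpha(\alpha+1)(\gamma+1)}}$, the first parameter in~\eqref{eq:h3}.

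Next I would invoke the cardinality bound~\eqref{eq:t}, which with $\beta=\gamma+1$ reads $|\BH^n_3(\alpha,\gamma+1)|\le\big(2^{\frac{\gamma+\alpha}{\gamma}}\alpha^{-\frac{\alpha}{\gamma}}(\gamma+1)^{\frac{\alpha(\gamma+1)}{\gamma^2}}(\tfrac{\gamma}{\gamma-\alpha})^{\frac{\gamma-\alpha}{\gamma}}+o(1)\big)^n$, precisely the third parameter in~\eqref{eq:h3}. Combining the two facts — a $\tau$-hitting set whose cardinality is at most this bound — while noting that $\alpha\ge1$ makes $\tau=\frac{\alpha-1}{\sqrt{\alpha(\alpha+1)(\gamma+1)}}\ge0$ and $\sqrt{\alpha(\alpha+1)(\gamma+1)}>\alpha-1$ makes $\tau<1$, so $\tau\in[-1,1]$ and the notion of a $\tau$-hitting set genuinely applies, immediately yields $\BH^n_3(\alpha,\gamma+1)\in\BT\big(n,\tau,(\dots)^n\big)$, which is~\eqref{eq:h3}.

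There is no genuine obstacle here; the proof is pure bookkeeping. The only points worth a moment's care are the boundary case $\alpha=\gamma$, where the factor $(\tfrac{\gamma}{\gamma-\alpha})^{\frac{\gamma-\alpha}{\gamma}}$ should be read as its limiting value $1$ (consistent with $|\I_1|$ being of lower order than $n$ in that regime, as seen in~\eqref{eq:i1} and~\eqref{eq:i2}), and the remark that, by the monotonicity $\BT(n,\tau_2,m)\subseteq\BT(n,\tau_1,m)$ for $\tau_1\le\tau_2$ recorded earlier, one may freely weaken the stated ratio if a cleaner constant is preferred downstream.
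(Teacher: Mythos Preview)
Your proposal is correct and matches the paper's approach exactly: the corollary is stated in the paper without proof, as it is simply the combination of the preceding Theorem (the hitting-ratio guarantee with $\beta=\gamma+1$) and the preceding Proposition (the cardinality bound~\eqref{eq:t}). Your write-up is in fact more detailed than what the paper provides, since the paper merely says ``In a nutshell, we have the following'' and states the corollary.
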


For a fixed $\alpha\ge1$, the largest $\frac{\alpha-1}{\sqrt{\alpha(\alpha+1)(\gamma+1)}}$ is $\frac{\alpha-1}{(\alpha+1)\sqrt{\alpha}}$, achieved when $\gamma=\alpha$. Correspondingly,
\[ 
|\BH^n_3(\alpha,\alpha+1)|\le \left(4\alpha^{-1}(\alpha+1)^{\frac{\alpha+1}{\alpha}}+o(1)\right)^n
\le (4+\epsilon)^n \mbox{ for some large $\alpha$.}
\]
If we maximize $\frac{\alpha-1}{(\alpha+1)\sqrt{\alpha}}$ to achieve the best $\tau$ for the $\tau$-hitting set, this is $\sqrt{\frac{1}{2}(5\sqrt{5}-11)} \approx 0.30028$ when $\alpha=2+\sqrt{5} \approx 4.236$. For reference, we list the $\tau$ and the cardinality for some $\BH^n_3(\alpha,\alpha+1)$ in Table~\ref{table:h3}.
\begin{table}[h]
\centering
\small\begin{tabular}{|l|p{0.4in}p{0.4in}p{0.4in}p{0.4in}p{0.4in}p{0.4in}p{0.4in}|}
\hline
$\alpha$ & 17.42 & 7.64 & 4.75 & $4.24$ & 4.00 & 3.00 & 2.00 \\
\hline
$\tau$ for the $\tau$-hitting set & 0.213 & 0.278 & 0.299 & 0.30028 & 0.300 & 0.288 & 0.235 \\
$t$ for $|\BH^n_3(\alpha,\alpha+1)|\le t^n$ & 5.00 & 6.00 & 7.00 & 7.31 & 7.48 & 8.47 & 10.40\\
\hline
\end{tabular}
\caption{Properties of $\BH^n_3(\alpha,\alpha+1)$ for some $\alpha$.}\label{table:h3}
\end{table}

If we are interested to minimize the upper bound of $|\BH^n_3(\alpha,\gamma+1)|$ in~\eqref{eq:t}, then by fixing $\alpha$ and choosing $\gamma$ sufficiently large, the bound can even be $(2+\epsilon)^n$. However, this makes sense only if $\gamma\ll n$. Moreover, the corresponding $\tau=\frac{\alpha-1}{\sqrt{\alpha(\alpha+1)(\gamma+1)}}$ will decrease quickly as $\gamma$ goes large.

\subsection{$\Omega\big(\sqrt{\ln n/n}\big)$-hitting sets}\label{sec:h4}

The hitting sets in Sections~\ref{sec:h2} and~\ref{sec:h3} can be used to construct new hitting sets which in fact derandomize the constructions in Section~\ref{sec:h1}. Recall that $\BE^n=\{\e_1,\e_2,\dots,\e_n\}$ is the standard basis of $\R^n$, $\boxtimes$ denotes the Kronecker product, and $\vee$ denotes vector appending.
\begin{lemma}\label{thm:kron}
If a hitting set $\BH^{n_1}\in\BT(n_1,\tau,m)$ with $\tau\ge0$, then $\BE^{n_2}\boxtimes\BH^{n_1}\in\BT\left(n_1n_2, \frac{\tau}{\sqrt{n_2}},mn_2\right)$.
\end{lemma}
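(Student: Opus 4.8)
The plan is to take an arbitrary unit vector $\bx\in\BS^{n_1n_2}$, reshape it into an $n_2\times n_1$ array, and argue that one of its $n_2$ rows has enough norm to be useful. Concretely, write $\bx=\bx^{(1)}\vee\bx^{(2)}\vee\dots\vee\bx^{(n_2)}$ where each block $\bx^{(j)}\in\R^{n_1}$, so that $\sum_{j=1}^{n_2}\|\bx^{(j)}\|^2=1$. By the pigeonhole principle there is an index $j^*$ with $\|\bx^{(j^*)}\|^2\ge 1/n_2$, i.e.\ $\|\bx^{(j^*)}\|\ge 1/\sqrt{n_2}$. This block is the object on which the hitting set $\BH^{n_1}$ will act.

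Next I would normalize this block: the unit vector $\bw:=\bx^{(j^*)}/\|\bx^{(j^*)}\|\in\BS^{n_1}$. Since $\BH^{n_1}\in\BT(n_1,\tau,m)$, there exists $\bv\in\BH^{n_1}$ with $\bv^{\T}\bw\ge\tau$, hence $\bv^{\T}\bx^{(j^*)}=\|\bx^{(j^*)}\|\,\bv^{\T}\bw\ge \|\bx^{(j^*)}\|\,\tau$. Now consider the vector $\e_{j^*}\boxtimes\bv\in\BE^{n_2}\boxtimes\BH^{n_1}\subseteq\BS^{n_1n_2}$: by the definition of the Kronecker product, $\e_{j^*}\boxtimes\bv$ has the block $\bv$ in the $j^*$-th slot and zeros elsewhere, so $(\e_{j^*}\boxtimes\bv)^{\T}\bx=\bv^{\T}\bx^{(j^*)}$. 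Combining the two estimates,
\[
(\e_{j^*}\boxtimes\bv)^{\T}\bx \;\ge\; \|\bx^{(j^*)}\|\,\tau \;\ge\; \frac{\tau}{\sqrt{n_2}},
\]
where the last inequality uses $\tau\ge0$ together with $\|\bx^{(j^*)}\|\ge 1/\sqrt{n_2}$. This shows $\bx\in\BB^{n_1n_2}\bigl(\e_{j^*}\boxtimes\bv,\tau/\sqrt{n_2}\bigr)$, and since $\bx$ was arbitrary the caps over all of $\BE^{n_2}\boxtimes\BH^{n_1}$ cover $\BS^{n_1n_2}$. The cardinality bound is immediate: $|\BE^{n_2}\boxtimes\BH^{n_1}|\le n_2\cdot|\BH^{n_1}|\le mn_2$.

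The only subtle point — and the reason the hypothesis $\tau\ge0$ is stated — is the step $\|\bx^{(j^*)}\|\,\tau\ge\tau/\sqrt{n_2}$: multiplying the inequality $\|\bx^{(j^*)}\|\ge1/\sqrt{n_2}$ by $\tau$ preserves direction only when $\tau$ is nonnegative. Everything else is bookkeeping with the block structure of $\vee$ and $\boxtimes$; I expect no real obstacle, though one should double-check that $\e_{j^*}\boxtimes\bv$ is genuinely a unit vector (it is, since $\|\e_{j^*}\|=\|\bv\|=1$ and the Kronecker product of unit vectors is a unit vector) so that it is a legitimate centre for a spherical cap.
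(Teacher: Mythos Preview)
Your proof is correct and follows essentially the same approach as the paper: block-decompose $\bx$, pick the block with norm at least $1/\sqrt{n_2}$ by pigeonhole, hit its normalization with $\BH^{n_1}$, and read off the inner product with the corresponding $\e_{j^*}\boxtimes\bv$. Your remarks on why $\tau\ge0$ is needed and why $\e_{j^*}\boxtimes\bv$ is a unit vector mirror the paper's observations.
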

\begin{proof}
First, for any $\e_i\in\BE^{n_2}$ and $\bz\in\BH^{n_1}$, one has $\|\e_i\boxtimes\bz\|=\|\e_i\|\cdot\|\bz\|=1$. Thus, $\BE^{n_2}\boxtimes\BH^{n_1}\subseteq\BS^{n_1n_2}$. For any $\bx\in\BS^{n_1n_2}$, let
$\bx=\bx_1\vee\bx_2\vee\dots\vee\bx_{n_2}$
where $\bx_k\in\R^{n_1}$ for $k=1,2,\dots,n_2$. Since $\sum_{k=1}^{n_2}\|\bx_k\|^2=\|\bx\|^2=1$, there exists an $\bx_i$, such that $\|\bx_i\|^2\ge\frac{1}{n_2}$.

Observing that $\frac{\bx_i}{\|\bx_i\|}\in\BS^{n_1}$, there exists $\by\in\BH^{n_1}$ such that $\by^{\T}\frac{\bx_i}{\|\bx_i\|}\ge\tau$. Therefore, we have $\e_i\boxtimes\by\in \BE^{n_2}\boxtimes\BH^{n_1}$ satisfying
\[
(\e_i\boxtimes\by)^{\T}\bx=\by^{\T}\bx_i\ge\tau\|\bx_i\|\ge \frac{\tau}{\sqrt{n_2}}.
\]

Finally, by noticing possible overlaps, one has $|\BE^{n_2}\boxtimes\BH^{n_1}|\le |\BE^{n_2}|\cdot|\BH^{n_1}|\le n_2m$.
\end{proof}

\begin{lemma}\label{thm:append}
If two hitting sets $\BH^{n_1}\in\BT(n_1,\tau_1,m_1)$ and $\BH^{n_2}\in\BT(n_2, \tau_2,m_2)$ with $\tau_1,\tau_2>0$, then $$\left(\BH^{n_1}\vee{\bf0}_{n_2}\right) \bigcup\left({\bf0}_{n_1}\vee\BH^{n_2}\right)\in \BT\left(n_1+n_2, \frac{\tau_1\tau_2}{\sqrt{{\tau_1}^2+{\tau_2}^2}},m_1+m_2\right).$$
\end{lemma}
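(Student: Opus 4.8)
The plan is to mimic the argument for Lemma~\ref{thm:kron}: take an arbitrary $\bx\in\BS^{n_1+n_2}$, split it according to the two coordinate blocks, and produce a good covering vector from whichever block carries a large enough share of the norm. Write $\bx=\bx_1\vee\bx_2$ with $\bx_1\in\R^{n_1}$ and $\bx_2\in\R^{n_2}$, so that $\|\bx_1\|^2+\|\bx_2\|^2=1$. First I would dispose of the degenerate cases: if $\bx_1={\bf0}$ then $\|\bx_2\|=1$, so the hitting property of $\BH^{n_2}$ yields some $\by\in\BH^{n_2}$ with $\by^{\T}\bx_2\ge\tau_2$, and since $\tau_1\le\sqrt{\tau_1^2+\tau_2^2}$ we get $({\bf0}_{n_1}\vee\by)^{\T}\bx=\by^{\T}\bx_2\ge\tau_2\ge\frac{\tau_1\tau_2}{\sqrt{\tau_1^2+\tau_2^2}}$; the case $\bx_2={\bf0}$ is symmetric.

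In the main case $a:=\|\bx_1\|>0$ and $b:=\|\bx_2\|>0$ with $a^2+b^2=1$. Then $\bx_1/a\in\BS^{n_1}$ and $\bx_2/b\in\BS^{n_2}$, so there exist $\by_1\in\BH^{n_1}$ and $\by_2\in\BH^{n_2}$ with $\by_1^{\T}\bx_1\ge\tau_1 a$ and $\by_2^{\T}\bx_2\ge\tau_2 b$. The vectors $\by_1\vee{\bf0}_{n_2}$ and ${\bf0}_{n_1}\vee\by_2$ both lie in the proposed set and satisfy $(\by_1\vee{\bf0}_{n_2})^{\T}\bx=\by_1^{\T}\bx_1\ge\tau_1 a$ and $({\bf0}_{n_1}\vee\by_2)^{\T}\bx=\by_2^{\T}\bx_2\ge\tau_2 b$. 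Hence the best covering vector in the set achieves an inner product with $\bx$ of at least $\max\{\tau_1 a,\tau_2 b\}$, and it only remains to bound this quantity below uniformly in $a,b$.

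The single computation in the proof is the one-variable optimization $\min\{\max\{\tau_1 a,\tau_2 b\}: a^2+b^2=1,\ a,b\ge0\}$. On the region $\tau_1 a\ge\tau_2 b$ the inner maximum equals $\tau_1 a$ and is decreasing in $b$, while on $\tau_1 a\le\tau_2 b$ it equals $\tau_2 b$ and is increasing in $b$; therefore the minimum is attained at the crossover $\tau_1 a=\tau_2 b$. Writing $a=c/\tau_1$ and $b=c/\tau_2$ and substituting into $a^2+b^2=1$ gives $c=\tau_1\tau_2/\sqrt{\tau_1^2+\tau_2^2}$, which is exactly the claimed hitting ratio. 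Membership in $\BS^{n_1+n_2}$ is immediate since $\|\by_1\vee{\bf0}_{n_2}\|=\|\by_1\|=1$ (and likewise for the other block), and the cardinality bound follows from $\big|(\BH^{n_1}\vee{\bf0}_{n_2})\cup({\bf0}_{n_1}\vee\BH^{n_2})\big|\le|\BH^{n_1}|+|\BH^{n_2}|\le m_1+m_2$. I do not expect a genuine obstacle here; the proof is short, and the only points requiring a little care are the zero-norm blocks, where the normalization $\bx_i/\|\bx_i\|$ is undefined, and checking that the slack $\tau_1\le\sqrt{\tau_1^2+\tau_2^2}$ lets those cases fall through.
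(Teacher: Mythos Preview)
Your proof is correct and follows essentially the same approach as the paper: split $\bx=\bx_1\vee\bx_2$, handle the degenerate zero-block cases, and in the main case use the hitting property on each block to get $\max\{\tau_1\|\bx_1\|,\tau_2\|\bx_2\|\}$, then show this is at least $\tau_1\tau_2/\sqrt{\tau_1^2+\tau_2^2}$. The only cosmetic difference is that the paper phrases the last step as ``either $\|\bx_1\|\ge\tau_2/\sqrt{\tau_1^2+\tau_2^2}$ or $\|\bx_2\|\ge\tau_1/\sqrt{\tau_1^2+\tau_2^2}$'' rather than explicitly solving the min--max, but this is the same computation.
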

\begin{proof}
For any $\bx\in\BS^{n_1+n_2}$, let $\bx=\bx_1\vee\bx_2$ where $\bx_1\in\R^{n_1}$ and $\bx_2\in\R^{n_2}$. If one of them is a zero vector, say $\bx_1={\bf 0}_{n_1}$, then $\|\bx_2\|=1$. There exists $\by\in\BH^{n_2}$ such that $\by^{\T}\bx_2\ge\tau_2$, and so
\[
\langle {\bf 0}_{n_1}\vee\by, \bx_1\vee\bx_2\rangle = \by^{\T}\bx_2\ge\tau_2\ge \frac{\tau_1\tau_2}{\sqrt{{\tau_1}^2+{\tau_2}^2}}.
\]
If both $\bx_1$ and $\bx_2$ are nonzero, then $\frac{\bx_k}{\|\bx_k\|}\in\BS^{n_k}$ for $k=1,2$. There exist $\by_k\in\BH^{n_k}$ with $\frac{\by_k^{\T}\bx_k}{\|\bx_k\|}\ge\tau_k$ for $k=1,2$. We have
\begin{align*}
\langle \by_1\vee{\bf0}_{n_2}, \bx_1\vee\bx_2 \rangle = \by_1^{\T}\bx_1&\ge \tau_1\|\bx_1\|\\
\langle {\bf0}_{n_1}\vee\by_2, \bx_1\vee\bx_2 \rangle = \by_2^{\T}\bx_2&\ge \tau_2\|\bx_2\|.
\end{align*}
As $\|\bx_1\|^2+\|\bx_2\|^2=\|\bx\|^2=1$, we must have either $\|\bx_1\|\ge\frac{\tau_2}{\sqrt{{\tau_1}^2+{\tau_2}^2}}$ or $\|\bx_2\|\ge\frac{\tau_1}{\sqrt{{\tau_1}^2+{\tau_2}^2}}$. In any case, we have
\[
\max\{\tau_1\|\bx_1\|,\tau_2\|\bx_2\|\}
\ge
\frac{\tau_1\tau_2}{\sqrt{{\tau_1}^2+{\tau_2}^2}},
\]
implying that either $\by_1\vee{\bf0}_{n_2}$ or ${\bf0}_{n_1}\vee\by_2$ is close enough to $\bx$.
\end{proof}

We are ready to construct new hitting sets using $\BH^n_2\in\BT\left(n,\Omega\left(\frac{1}{\sqrt{\ln n}}\right),3^n\right)$ in Section~\ref{sec:h2} and $\BH^n_3\in\BT\left(n,\mu,\nu^n\right)$ with universal constants $\mu,\nu>0$, a handy notation of~\eqref{eq:h3} in Section~\ref{sec:h3}.

\begin{theorem}\label{thm:h4}
Given integer $n\ge2$, let $n_1=\lceil\ln n\rceil$, $n_2=\lfloor\frac{n}{n_1}\rfloor$, and $n_3=n-n_1n_2$. One has
\begin{equation}\label{eq:h4}
\BH^n_4:=\left(\left(\BE^{n_2}\boxtimes\BH^{n_1}_2\right)\vee{\bf0}_{n_3}\right) \bigcup \left({\bf0}_{n_1n_2}\vee\BH^{n_3}_2\right)
\in\BT\left(n,\Omega\left(\sqrt{\frac{\ln n}{n\ln\ln n}}\right),O(n^{1+\ln3})\right).
\end{equation}
\end{theorem}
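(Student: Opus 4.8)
The plan is to combine the two gluing lemmas, Lemma~\ref{thm:kron} and Lemma~\ref{thm:append}, with the $\Omega(1/\sqrt{\ln n})$-hitting set $\BH^n_2$ from Corollary~\ref{thm:h2}, tracking the hitting ratio and cardinality through each step. First I would handle the main block $\left(\BE^{n_2}\boxtimes\BH^{n_1}_2\right)\vee{\bf0}_{n_3}$. Since $n_1=\lceil\ln n\rceil$, Corollary~\ref{thm:h2} gives $\BH^{n_1}_2\in\BT\bigl(n_1,\frac{2}{\sqrt{\ln n_1+5}},3^{n_1}\bigr)$, and since $\ln n_1 + 5 = O(\ln\ln n)$, the ratio is $\Omega(1/\sqrt{\ln\ln n})$ while the cardinality is $3^{n_1}=3^{\lceil\ln n\rceil}=O(n^{\ln 3})$. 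Applying Lemma~\ref{thm:kron} with $n_2=\lfloor n/n_1\rfloor=O(n/\ln n)$ copies, I get $\BE^{n_2}\boxtimes\BH^{n_1}_2\in\BT\bigl(n_1 n_2, \frac{\tau_1}{\sqrt{n_2}}, n_2\cdot 3^{n_1}\bigr)$ where $\tau_1=\Omega(1/\sqrt{\ln\ln n})$; dividing by $\sqrt{n_2}=O(\sqrt{n/\ln n})$ turns the ratio into $\Omega\bigl(\sqrt{\ln n/(n\ln\ln n)}\bigr)$, and the cardinality is $O\bigl((n/\ln n)\cdot n^{\ln 3}\bigr)=O(n^{1+\ln 3})$. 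This is the dominant contribution to both the ratio and the count.

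Next I would deal with the small tail block ${\bf0}_{n_1n_2}\vee\BH^{n_3}_2$. Here $n_3 = n - n_1 n_2 < n_1 = \lceil\ln n\rceil$, so $n_3$ is tiny (at most logarithmic in $n$). If $n_3=0$ the tail is empty and the first block alone already covers $\BS^n$ after the appending-with-zeros step, but to keep the statement uniform I would note that when $n_3\ge 1$, Corollary~\ref{thm:h2} gives $\BH^{n_3}_2\in\BT\bigl(n_3,\tau_3,3^{n_3}\bigr)$ with $\tau_3=\Omega(1)$ (a universal constant, since $n_3$ is bounded by $O(\ln n)$, indeed $\tau_3\ge \frac{2}{\sqrt{\ln\ln n + O(1)}}$, but in any case $\tau_3 \ge \tau_1$ for large $n$), and the cardinality $3^{n_3} = O(n^{\ln 3})$ — actually much smaller, since $n_3 < \ln n$ gives $3^{n_3} = O(n^{\ln 3})$ at worst. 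The precise constant here does not matter; what matters is that $\tau_3$ dominates $\tau_1$.

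Then I would invoke Lemma~\ref{thm:append} on the two pieces $\BH^{n_1 n_2} := \BE^{n_2}\boxtimes\BH^{n_1}_2 \in \BT(n_1 n_2, \tau_1, m_1)$ and $\BH^{n_3}_2 \in \BT(n_3, \tau_3, m_3)$ — this is exactly the construction in~\eqref{eq:h4} with $n_1 n_2$ and $n_3$ playing the roles of the two dimensions, since $n_1 n_2 + n_3 = n$. Lemma~\ref{thm:append} yields hitting ratio $\frac{\tau_1\tau_3}{\sqrt{\tau_1^2+\tau_3^2}}$; because $\tau_3 = \Omega(1)$ is bounded below by a constant (in fact bounded below by $\tau_1$ for $n$ large) while $\tau_1\to 0$, we have $\frac{\tau_1\tau_3}{\sqrt{\tau_1^2+\tau_3^2}} = \Theta(\tau_1) = \Omega\bigl(\sqrt{\ln n/(n\ln\ln n)}\bigr)$, and the cardinality is $m_1 + m_3 = O(n^{1+\ln 3}) + O(n^{\ln 3}) = O(n^{1+\ln 3})$. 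I should also record the boundary behaviour: for small $n$ (say $n$ below a fixed threshold, or when $n_1=1$ so that $\BH^1_2$ is just $\{\pm 1\}$) the bounds hold trivially by absorbing into the $\Omega$ and $O$ constants, and when $n_3=0$ one simply skips Lemma~\ref{thm:append} and uses Lemma~\ref{thm:kron} followed by appending a zero coordinate. The main obstacle, such as it is, is purely bookkeeping: verifying that $3^{\lceil\ln n\rceil} = O(n^{\ln 3})$ (which follows from $3^{\lceil\ln n\rceil} \le 3\cdot 3^{\ln n} = 3 n^{\ln 3}$), that the $\sqrt{n_2}$ factor produces exactly the claimed $\sqrt{\ln n/n}$ scaling up to the $\sqrt{\ln\ln n}$ loss from using $\BH^{n_1}_2$ rather than a polynomial-cardinality hitting set in dimension $n_1$, and that the tail ratio $\tau_3$ never becomes the bottleneck — no single step is conceptually hard, but the constants must line up, and in particular one should double-check the edge cases $n_1=1$ and $n_3\in\{0,1\}$ separately so that the hitting-set guarantees of Lemmas~\ref{thm:kron} and~\ref{thm:append} (which require $\tau\ge 0$ resp.\ $\tau_1,\tau_2>0$) are not vacuously applied.
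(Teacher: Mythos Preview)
Your proposal is correct and follows essentially the same approach as the paper: apply Corollary~\ref{thm:h2} to $\BH^{n_1}_2$ and $\BH^{n_3}_2$, use Lemma~\ref{thm:kron} for the Kronecker block, and glue the two pieces with Lemma~\ref{thm:append}, tracking that $n_1,n_3\le O(\ln n)$ and $n_2\le n/\ln n$. One small slip: $\tau_3$ is not $\Omega(1)$ but only $\Omega(1/\sqrt{\ln\ln n})$ (as you yourself note parenthetically); this does not matter since what the argument actually needs is $\tau_3\ge \tau_1$, which you correctly identify.
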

\begin{proof}
First, as $n_3=n-n_1\lfloor\frac{n}{n_1}\rfloor<n_1$, we have
\begin{equation}\label{eq:n123}
n_1\le\ln n+1,\; n_2\le\frac{n}{\ln n}, \mbox{ and } n_3\le\ln n.
\end{equation}
According to Corollary~\ref{thm:h2} and Lemma~\ref{thm:kron},
\[
\BE^{n_2}\boxtimes\BH^{n_1}_2\in\BT\left(n_1n_2, \Omega\left(\frac{1}{\sqrt{n_2\ln n_1}}\right),n_23^{n_1}\right)
\subseteq \BT\left(n_1n_2, \Omega\left(\sqrt{\frac{\ln n}{n\ln\ln n}}\right),\frac{3n^{1+\ln3}}{\ln n}\right).
\]
Besides, one has
\[
\BH^{n_3}_2\in\BT\left(n_3,\Omega\left(\frac{1}{\sqrt{\ln n_3}}\right),3^{n_3}\right)
\subseteq \BT\left(n_3,\Omega\left(\frac{1}{\sqrt{\ln \ln n}}\right),n^{\ln 3}\right).
\]
Noticing that $\frac{1}{\sqrt{\ln \ln n}}\ge \sqrt{\frac{\ln n}{n\ln\ln n}}$,~\eqref{eq:h4} can be obtained by applying Lemma~\ref{thm:append}.
\end{proof}

Although $\Omega\left(\sqrt{\frac{\ln n}{n\ln\ln n}}\right)$ is slightly lower than $\Omega\left(\sqrt{\frac{\ln n}{n}}\right)$, the construction of $\BH^n_4$ in~\eqref{eq:h4} is very simple (using $\BH^n_2$) and enjoys a low cardinality $O(n^{1+\ln3})\le O(n^{2.1})$. {\color{black} We remark that it is even possible to construct an $\Omega\left(\sqrt{\frac{\ln n}{n\ln\ln n}}\right)$-hitting set with a lower cardinality $O(n^{1.5})$. This can be done by using $\BH^{n_1}_0(m)$ with $m=\lceil\sqrt{\ln n}\rceil$ and $n_1=\lceil\frac{\ln n}{\ln\ln n}\rceil$ in place of $\BH^{n_1}_2$ in constructing $\BH^n_4$ in Theorem~\ref{thm:h4}}. We leave the details to interested readers. In order to remove the ${\color{black}\frac{1}{\sqrt{\ln\ln n}}}$ factor, we need to make use of $\BH^n_3(\alpha,\beta)$.

\begin{theorem}\label{thm:h5}
Given integer $n\ge2$, let $n_1=\lceil\ln n\rceil$, $n_2=\lfloor\frac{n}{n_1}\rfloor$, and $n_3=n-n_1n_2$. By choosing any $\BH^n_3(\alpha,\beta)\in\BT\left(n,\mu,\nu^n\right)$ in~\eqref{eq:h3} with $\alpha\ge1$ and $\beta\ge\alpha+1$, one has
\[
\BH^n_5(\alpha,\beta):=\left(\left(\BE^{n_2}\boxtimes\BH^{n_1}_3(\alpha,\beta)\right)\vee{\bf0}_{n_3}\right) \bigcup \left({\bf0}_{n_1n_2}\vee\BH^{n_3}_3(\alpha,\beta)\right)
\in\BT\left(n,\mu\sqrt{\frac{\ln n}{n+\ln n}},O(n^{1+\ln\nu})\right).
\]
\end{theorem}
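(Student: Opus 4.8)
The plan is to mirror the proof of Theorem~\ref{thm:h4} almost verbatim, substituting $\BH^n_3(\alpha,\beta)$ for $\BH^n_2$ and tracking the constant hitting ratio $\mu$ through the two composition lemmas. First I would record the same dimension bookkeeping as in~\eqref{eq:n123}, namely $n_1\le\ln n+1$, $n_2\le n/\ln n$, and $n_3\le\ln n$, which follows from $n_3=n-n_1\lfloor n/n_1\rfloor<n_1=\lceil\ln n\rceil$. These give $n_1n_2\le n$ and $n_3\ge0$, and more importantly $n_2\ge\frac{n}{n_1}-1\ge\frac{n}{\ln n+1}-1$, so that $\frac{1}{\sqrt{n_2}}=\Omega\big(\sqrt{\ln n/n}\big)$ up to the precise constant we want to nail down.

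Next I would apply Lemma~\ref{thm:kron} to the block $\BH^{n_1}_3(\alpha,\beta)\in\BT(n_1,\mu,\nu^{n_1})$: this yields
\[
\BE^{n_2}\boxtimes\BH^{n_1}_3(\alpha,\beta)\in\BT\left(n_1n_2,\ \frac{\mu}{\sqrt{n_2}},\ n_2\nu^{n_1}\right).
\]
For the cardinality, $n_2\nu^{n_1}\le \frac{n}{\ln n}\cdot\nu^{\ln n+1}=\frac{\nu}{\ln n}\,n^{1+\ln\nu}=O(n^{1+\ln\nu})$, using $\nu^{\ln n}=n^{\ln\nu}$. For the ratio, since $n_2\le n/\ln n$ we get $\frac{\mu}{\sqrt{n_2}}\ge\mu\sqrt{\frac{\ln n}{n}}$; I would want the slightly sharper form $\mu\sqrt{\frac{\ln n}{n+\ln n}}$, which follows from $n_2\le\frac{n}{n_1}\le\frac{n}{\ln n}$ together with the fact that the target ratio in the theorem statement, $\mu\sqrt{\frac{\ln n}{n+\ln n}}$, is dominated by $\mu\sqrt{\ln n/n}$ — so any bound at least $\mu\sqrt{\ln n/n}$ suffices and monotonicity of $\BT$ in $\tau$ finishes it. Separately, $\BH^{n_3}_3(\alpha,\beta)\in\BT(n_3,\mu,\nu^{n_3})$ with $\nu^{n_3}\le\nu^{\ln n}=n^{\ln\nu}=O(n^{1+\ln\nu})$, and its ratio $\mu$ is a universal constant, hence trivially at least $\mu\sqrt{\frac{\ln n}{n+\ln n}}$ for all $n\ge2$.

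Finally I would invoke Lemma~\ref{thm:append} with $\tau_1=\frac{\mu}{\sqrt{n_2}}$ (the ratio of the Kronecker block, padded by ${\bf0}_{n_3}$) and $\tau_2=\mu$ (the ratio of $\BH^{n_3}_3$, padded by ${\bf0}_{n_1n_2}$), noting that when $n_3=0$ the second block is empty and the statement degenerates to the Kronecker block alone, which still lies in the claimed class; I would handle that edge case in one sentence. Lemma~\ref{thm:append} gives a combined ratio $\frac{\tau_1\tau_2}{\sqrt{\tau_1^2+\tau_2^2}}=\frac{\tau_1}{\sqrt{1+\tau_1^2/\tau_2^2}}\ge\frac{\tau_1}{\sqrt{2}}$ since $\tau_1\le\tau_2$, which is still $\Omega(\mu\sqrt{\ln n/n})$; to land exactly on $\mu\sqrt{\frac{\ln n}{n+\ln n}}$ I would instead keep the expression symbolic, substitute $\tau_1=\mu/\sqrt{n_2}$, $\tau_2=\mu$, and simplify $\frac{\mu^2/\sqrt{n_2}}{\mu\sqrt{1/n_2+1}}=\frac{\mu}{\sqrt{n_2+1}}\ge\frac{\mu}{\sqrt{n/\ln n+1}}=\mu\sqrt{\frac{\ln n}{n+\ln n}}$, using $n_2\le n/\ln n$; the cardinality is the sum $O(n^{1+\ln\nu})+O(n^{1+\ln\nu})=O(n^{1+\ln\nu})$. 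The only genuinely delicate point — really the sole obstacle — is making the $n_3=0$ degenerate case and the $n\le$ (small) cases rigorous, since Lemma~\ref{thm:append} as stated assumes two nonempty hitting sets with positive ratios; everything else is routine substitution into the two lemmas.
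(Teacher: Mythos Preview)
Your proposal is correct and follows essentially the same approach as the paper: apply Lemma~\ref{thm:kron} to get $\BE^{n_2}\boxtimes\BH^{n_1}_3(\alpha,\beta)\in\BT(n_1n_2,\mu/\sqrt{n_2},n_2\nu^{n_1})$, then Lemma~\ref{thm:append} with $\tau_1=\mu/\sqrt{n_2}$ and $\tau_2=\mu$ to obtain the combined ratio $\mu/\sqrt{n_2+1}\ge\mu\sqrt{\ln n/(n+\ln n)}$, and bound the cardinalities by $n_2\nu^{n_1}\le\frac{\nu}{\ln n}n^{1+\ln\nu}$ and $\nu^{n_3}\le n^{\ln\nu}$. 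The paper's own proof is exactly this computation; it does not treat the $n_3=0$ edge case separately either, so your extra sentence on that point is a harmless addition rather than a divergence.
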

\begin{proof}
The proof is similar to that of Theorem~\ref{thm:h4} by noticing~\eqref{eq:n123} and applying Lemma~\ref{thm:kron} and Lemma~\ref{thm:append}. We only need to carry out the calculations.

The $\tau$ for the $\tau$-hitting set $\BE^{n_2}\boxtimes\BH^{n_1}_3(\alpha,\beta)$ is $\frac{\mu}{\sqrt{n_2}}$ and that for $\BH^{n_3}_3(\alpha,\beta)$ is $\mu$. By Lemma~\ref{thm:append}, the $\tau$ for $\BH^n_5(\alpha,\beta)$ is
$
\frac{\frac{\mu}{\sqrt{n_2}}\cdot\mu}{\sqrt{\frac{\mu^2}{n_2}+\mu^2}} = \frac{\mu}{\sqrt{n_2+1}} \ge \mu\sqrt{\frac{\ln n}{n+\ln n}}.
$

For the cardinality, one has $|\BE^{n_2}\boxtimes\BH^{n_1}_3(\alpha,\beta)|\le n_2\nu^{n_1}\le \frac{\nu n^{1+\ln \nu}}{\ln n}$ and $|\BH^{n_3}_3(\alpha,\beta)|\le \nu^{n_3}\le n^{\ln\nu}$. Adding up these two would give $O(n^{1+\ln\nu})$.
\end{proof}

With the cardinality $O(n^{1+\ln\nu})$ in place, it is natural to select the best $\mu$ for $\BH^{n}_3(\alpha,\beta)$ with $\beta = \gamma +1$ in~\eqref{eq:h3}. According to Table~\ref{table:h3}, the largest $\mu=0.30028$ with $\nu=7.31$, obtained when $\alpha=2+\sqrt{5}$ and $\beta=3+\sqrt{5}$. This results a cardinality $O(n^{1+\ln\nu})\le O(n^3)$. To conclude, we have
\begin{equation}\label{eq:h6}
\BH^n_5(2+\sqrt{5},3+\sqrt{5})\in\BT\left(n,0.3\sqrt{\frac{\ln n}{n}},O(n^3)\right).
\end{equation}

{\color{black}
Before concluding this section, we remark that the estimated $\tau$ serves a lower bound and $m$ serves an upper bound for the proposed hitting sets. We evaluate their exact values of one example ($n=6$) by numerical computations, shown in Table~\ref{table:list2} {\color{black} where the Greek letters in estimated values are some unknown constants}. Due to the randomness of $\BH^n_1$, we try ten times for any $m$ and provide corresponding $\tau$ by an interval range.

\begin{table}[h]
\centering
\small\begin{tabular}{|l|cc|cc|}
\hline
Hitting set in $\BS^6$ & Exact $\tau$ & Exact $m$ & Estimated $\tau$  & Estimated $m$   \\
\hline
A regular simplex in $\BS^6$ & 0.167 & 7 & 0.167 & 7 \\
$\BE^6\cup(-\BE^6)$ & 0.408 & 12 & 0.408 & 12\\
$\BH^6_1(\gamma_1,\epsilon_1)$  & [0.331, 0.442] & 27  &  $\omega_1\cdot 0.546$ & $o_1\cdot 6^{\alpha_1}$ \\
$\BH^6_1(\gamma_2,\epsilon_2)$ & [0.521, 0.592] & 60  &  $\omega_2\cdot 0.546$ & $o_2\cdot 6^{\alpha_2}$ \\
$\BH^6_4$ & 0.546 & 27 & $\omega_3\cdot 0.546$ & $o_3\cdot 6^{2.792}$ \\
$\BH^6_5(2+\sqrt{5},3+\sqrt{5})$ & 0.544 & 36  & $\omega_4\cdot 0.546$ & $o_4\cdot 6^{2.989}$ \\
$\BH^6_2$ & 0.835 & 728 & $\omega_5\cdot 0.747$  & $3^6=729$ \\
$\BH^6_3(2+\sqrt{5},3+\sqrt{5})$ & 0.820 & 16896  & $\omega_6\cdot 1.000$ & $7.31^6=152582$\\
\hline
\end{tabular}
\caption{Exact and theoretical estimates of $\tau$ and $m$ for hitting sets in $\BS^6$}\label{table:list2}
\end{table}

For our main constructions of $\Omega\left(\sqrt{\frac{\ln n}{n}}\right)$-hitting sets, $\BH_1^n$, $\BH_4^n$ and $\BH_5^n$, comparisons of $\tau$ and $m$ for a few small $n$'s are shown in Table~\ref{table:list3}. We observe that random hitting sets will outperform deterministic ones when $n$ increases although they are worse when $n$ is small.
\begin{table}[h]
\centering
\small\begin{tabular}{|l|cc|cc|cc|cc|}
\hline
&  \multicolumn{2}{c|}{$n=6$} & \multicolumn{2}{c|}{$n=8$} & \multicolumn{2}{c|}{$n=12$}  & \multicolumn{2}{c|}{$n=15$} \\
Hitting set & $\tau$  & $m$& $\tau$ & $m$  & $\tau$  & $m$  & $\tau$ & $m$ \\
\hline
$\BH^n_1$ & [0.331, 0.442] & 24 & [0.272, 0.384] & 32 & [0.368, 0.410] & 104 & [0.320, 0.391] & 130 \\
$\BH^n_1$  & [0.521, 0.592] & 60 & [0.460, 0.502] & 80 & [0.441, 0.484] & 184 & [0.428, 0.452] & 235 \\
$\BH^n_4$ & 0.546 & 24 & 0.485 & 32 & 0.4653 & 104 & 0.431  & 130 \\
$\BH^n_5$ & 0.544 & 36 & 0.489 & 48 & 0.4713 & 256 & 0.433 & 320 \\
\hline
\end{tabular}
\caption{Exact $\tau$ and $m$ for $\Omega\big(\sqrt{\ln n/ n}\big)$-hitting sets in $\BS^n$}
\label{table:list3}
\end{table}
}

\section{Approximating tensor norms}\label{sec:tensor}

In this section we apply explicit constructions for sphere covering, in particular the deterministic $\Omega\left(\sqrt{\frac{\ln n}{n}}\right)$-hitting sets in Section~\ref{sec:h4}, to derive new approximation methods for the tensor spectral norm and nuclear norm. Let us formally define the approximation bound for tensor norms.
\begin{definition}
A tensor norm $\|\bullet\|_\omega$ can be approximated with an approximation bound $\tau\in(0,1]$, if there exists a polynomial-time algorithm that computes a quantity $\omega_\TT$ for any tensor instance $\TT$ in the concerned space, such that $\tau \|\TT\|_{\omega} \le \omega_\TT\le \|\TT\|_{\omega}$.
\end{definition}

Obviously the larger the $\tau$, the better the approximation bound. We consider the tensor space $\R^{n_1\times n_2 \times \dots \times n_d}$ of order $d\ge3$ and assume without loss of generality that $2\le n_1\le n_2 \le\dots\le n_d$.

\subsection{Approximation bound for tensor spectral norm}\label{sec:snorm}

Given a tensor $\TT\in\R^{n_1\times n_2 \times \dots \times n_d}$, let us denote (recall that $\otimes$ stands for the outer product)
\begin{equation}\label{eq:mform}
\TT(\bx_1,\bx_2,\dots,\bx_d) = \left\langle \TT, \bx_1\otimes\bx_2\otimes\dots\otimes\bx_d \right\rangle = \sum_{i_1=1}^{n_1}\sum_{i_2=1}^{n_2}\dots\sum_{i_d=1}^{n_d} t_{i_1i_2\dots i_d} (x_1)_{i_1}(x_2)_{i_2}\dots (x_d)_{i_d}
\end{equation}
to be the multilinear function of vector entries $(\bx_1,\bx_2,\dots,\bx_d)$ where $\bx_k\in\R^{n_k}$ for $k=1,2,\dots,d$. If any vector entry, say $\bx_1$, is missing and replaced by $\bullet$, then $\TT(\bullet,\bx_2,\bx_3,\dots,\bx_d)\in\R^{n_1}$ becomes a vector. Similarly, $\TT(\bullet,\bullet,\bx_3,\bx_4,\dots,\bx_d)\in\R^{n_1\times n_2}$ is a matrix, and so on.
\begin{definition}\label{def:snorm}
For a given tensor $\TT\in\R^{n_1\times n_2\times\dots\times n_d}$, the spectral norm of $\TT$ is defined as
\begin{equation} \label{eq:defsnorm}
\|\TT\|_{\sigma}:=\max\left\{\TT(\bx_1,\bx_2,\dots,\bx_d): \|\bx_k\|=1,\,\bx_k\in\R^{n_k},\, k=1,2,\dots,d\right\}.
\end{equation}
\end{definition}

The tensor spectral norm was proposed by Lim~\cite{L05} in terms of singular values of a tensor. In light of~\eqref{eq:mform}, $\|\TT\|_{\sigma}$ is the maximal value of the Frobenius inner product between $\TT$ and a rank-one tensor whose Frobenius norm is one since $\|\bx_1\otimes\bx_2\otimes\dots\otimes\bx_d\|=\prod_{k=1}^d\|\bx_k\|=1$.

When $d=2$,~\eqref{eq:defsnorm} is reduced to the matrix spectral norm or the largest singular value of the matrix, which can be computed in polynomial time (e.g., via singular value decompositions). He et al.~\cite{HLZ10} showed that~\eqref{eq:defsnorm} is NP-hard when $d\ge3$. They also proposed the first polynomial-time algorithm with a worst-case approximation bound $\left(\prod_{k=1}^{d-2}\frac{1}{n_k}\right)^{\frac{1}{2}}$. The best known approximation bound for the tensor spectral norm is $\Omega\left(\left(\prod_{k=1}^{d-2}\frac{\ln n_k}{n_k}\right)^{\frac{1}{2}}\right)$ by So~\cite{S11}. However, the method in~\cite{S11} relies on the equivalence between convex optimization and membership oracle queries using the ellipsoid method and it is computationally impractical. There is also a simple but randomized algorithm for the same best bound proposed in~\cite{HJLZ14}. Here in this subsection we are able to present the first easily implementable and deterministic algorithm based on sphere covering, with the same approximation bound $\Omega\left(\left(\prod_{k=1}^{d-2}\frac{\ln n_k}{n_k}\right)^{\frac{1}{2}}\right)$. To make an exact bound without involving $\Omega$, we need to use $\BH^n_5(2+\sqrt{5},3+\sqrt{5})\in\BT\left(n,0.3\sqrt{\frac{\ln n}{n}},O(n^3)\right)$ in~\eqref{eq:h6}.

\begin{algorithm}\label{alg:snorm}
Given $\TT\in\R^{n_1\times n_2\times\dots\times n_d}$, find approximate spectral norm of $\TT$.

\vspace{-0.2cm}\noindent\hrulefill
\begin{enumerate}
\item Enumerate $\bz_k\in\BH^{n_k}_5(2+\sqrt{5},3+\sqrt{5})$ for $k=1,2,\dots,d-2$ and solve the resulting matrix spectral norm problem $\max\left\{\TT(\bz_1,\bz_2,\dots,\bz_{d-2},\bx_{d-1},\bx_{d}):\|\bx_{d-1}\|=\|\bx_{d}\|=1 \right\}$ whose optimal solution is denoted by $(\bz_{d-1},\bz_d)$.
\item Compare all the objective values in the first step and output the largest one.
\end{enumerate}
\vspace{-0.2cm}\hrulefill
\end{algorithm}

It is obvious that Algorithm~\ref{alg:snorm} runs in polynomial time as $|\BH^{n_k}_5(2+\sqrt{5},3+\sqrt{5})|=O({n_k}^3)$ and the matrix spectral norm is polynomial-time computable. Moreover, the corresponding approximate solution $(\bz_1,\bz_2,\dots,\bz_{d-2})$ is universal, i.e., $\bz_k\in\BH^{n_k}_5(2+\sqrt{5},3+\sqrt{5})$ is independent of the data $\TT$ for $k=1,2,\dots,d-2$.

\begin{theorem}\label{thm:snorm}
Algorithm~\ref{alg:snorm} is a deterministic polynomial-time algorithm that approximates $\|\TT\|_\sigma$ with a worst-case approximation bound $0.3^{d-2}\left(\prod_{k=1}^{d-2}\frac{\ln n_k}{n_k}\right)^{\frac{1}{2}}$ for any $\TT\in\R^{n_1\times n_2\times\dots\times n_d}$, i.e., $\bz_k\in\BH^{n_k}_5(2+\sqrt{5},3+\sqrt{5})$ for $k=1,2,\dots,d-2$ and $\bz_k\in\BS^{n_k}$ for $k=d-1,d$ can be found such that
\[
0.3^{d-2}\left(\prod_{k=1}^{d-2}\frac{\ln n_k}{n_k}\right)^{\frac{1}{2}} \|\TT\|_\sigma \le \TT(\bz_1,\bz_2,\dots,\bz_d)\le \|\TT\|_\sigma.
\]
\end{theorem}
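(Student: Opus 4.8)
The plan is to prove this by a straightforward induction-style peeling argument on the first $d-2$ modes, using the hitting set property one mode at a time. The upper bound $\TT(\bz_1,\dots,\bz_d)\le\|\TT\|_\sigma$ is immediate since $(\bz_1,\dots,\bz_d)$ is a feasible point of~\eqref{eq:defsnorm} (each $\bz_k$ is a unit vector), so all the work is in the lower bound.

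For the lower bound, let $(\bx_1^*,\dots,\bx_d^*)$ be an optimal solution of~\eqref{eq:defsnorm}, so $\TT(\bx_1^*,\dots,\bx_d^*)=\|\TT\|_\sigma$. First I would handle mode $1$: since $\bx_1^*\in\BS^{n_1}$ and $\BH^{n_1}_5(2+\sqrt5,3+\sqrt5)\in\BT\big(n_1,0.3\sqrt{\ln n_1/n_1},O(n_1^3)\big)$ by~\eqref{eq:h6}, there is some $\bz_1$ in this hitting set with $\bz_1^{\T}\bx_1^*\ge 0.3\sqrt{\ln n_1/n_1}$. The key point is that the multilinear form is linear in the first argument, so writing $\TT_1:=\TT(\bullet,\bx_2^*,\dots,\bx_d^*)\in\R^{n_1}$, applying the decomposition of $\bx_1^*$ into its component along $\bz_1$ plus an orthogonal part, and using $\|\TT_1\|\ge\TT_1^{\T}\bx_1^*=\|\TT\|_\sigma$ together with $\TT_1^{\T}\bz_1=\TT(\bz_1,\bx_2^*,\dots,\bx_d^*)$, one gets that for this choice of $\bz_1$ there still exist unit vectors (namely $\bx_2^*,\dots,\bx_d^*$) with $\TT(\bz_1,\bx_2^*,\dots,\bx_d^*)\ge 0.3\sqrt{\ln n_1/n_1}\,\|\TT\|_\sigma$ --- wait, this needs care, since that inner-product-only bound does not by itself control $\TT(\bz_1,\bx_2^*,\dots)$. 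The cleaner route, which I would actually use, is: define $\lambda_1=\max\{\TT(\bz_1,\bx_2,\dots,\bx_d):\bz_1\in\BH^{n_1}_5, \|\bx_k\|=1\}$ and observe that the slice $\TT(\bullet,\bx_2^*,\dots,\bx_d^*)$ has norm $\ge\|\TT\|_\sigma$ (taking $\bx_1=\bx_1^*$), hence its normalization lies in $\BS^{n_1}$ and is hit by some $\bz_1$ with inner product $\ge 0.3\sqrt{\ln n_1/n_1}$, giving $\TT(\bz_1,\bx_2^*,\dots,\bx_d^*)\ge 0.3\sqrt{\ln n_1/n_1}\,\|\TT\|_\sigma$.

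Then I would iterate: having fixed $\bz_1,\dots,\bz_{j-1}$ with
\[
\TT(\bz_1,\dots,\bz_{j-1},\bx_j^*,\dots,\bx_d^*)\ge \Big(\prod_{k=1}^{j-1}0.3\sqrt{\tfrac{\ln n_k}{n_k}}\Big)\|\TT\|_\sigma,
\]
apply the same slicing argument to mode $j$: the vector $\TT(\bz_1,\dots,\bz_{j-1},\bullet,\bx_{j+1}^*,\dots,\bx_d^*)\in\R^{n_j}$ has norm at least the above right-hand side divided by nothing (it is $\ge$ that quantity, achieved at $\bx_j^*$), so its unit normalization is hit by some $\bz_j\in\BH^{n_j}_5$ with inner product $\ge 0.3\sqrt{\ln n_j/n_j}$, yielding the same bound with $j-1$ replaced by $j$. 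After $d-2$ steps we have fixed $\bz_1,\dots,\bz_{d-2}$ with $\TT(\bz_1,\dots,\bz_{d-2},\bx_{d-1}^*,\bx_d^*)\ge \big(\prod_{k=1}^{d-2}0.3\sqrt{\ln n_k/n_k}\big)\|\TT\|_\sigma$. Finally, Step~1 of Algorithm~\ref{alg:snorm} computes $\max_{\|\bx_{d-1}\|=\|\bx_d\|=1}\TT(\bz_1,\dots,\bz_{d-2},\bx_{d-1},\bx_d)$ exactly via the matrix spectral norm, so the pair $(\bz_{d-1},\bz_d)$ it returns satisfies $\TT(\bz_1,\dots,\bz_d)\ge\TT(\bz_1,\dots,\bz_{d-2},\bx_{d-1}^*,\bx_d^*)$, closing the chain. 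The polynomial-time claim follows since $\prod_{k=1}^{d-2}|\BH^{n_k}_5|=\prod_{k=1}^{d-2}O(n_k^3)$ is polynomial and matrix spectral norm computation is polynomial.

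The main obstacle is purely expository rather than mathematical: one has to phrase the peeling step so that it is the slice (vector) whose norm is bounded below and then normalized to land on the sphere --- a naive ``replace $\bx_j^*$ by a nearby $\bz_j$ and bound the error'' argument would leak a dimension-dependent additive term and not give a clean multiplicative bound. Making sure the inductive hypothesis is stated with the partial objective value (not merely inner products) is what makes the argument go through cleanly, and it is also why Algorithm~\ref{alg:snorm}'s use of the exact matrix spectral norm in the last two modes, rather than another hitting set, is essential to avoid losing further factors.
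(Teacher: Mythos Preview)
Your proposal is correct and follows essentially the same route as the paper: take an optimal $(\bx_1^*,\dots,\bx_d^*)$, for each $k=1,\dots,d-2$ apply the hitting set property to the slice vector $\TT(\bz_1,\dots,\bz_{k-1},\bullet,\bx_{k+1}^*,\dots,\bx_d^*)$ (via its normalization) to peel off a factor $0.3\sqrt{\ln n_k/n_k}$, and finish with the exact matrix spectral norm in the last two modes. The only cosmetic difference is that the paper explicitly disposes of the degenerate case where the slice vector is zero, whereas you implicitly cover it since the slice norm is $\ge\|\TT\|_\sigma$ (and the case $\|\TT\|_\sigma=0$ is trivial).
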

\begin{proof}
Let us denote $\tau_k=0.3\sqrt{\frac{\ln n_k}{n_k}}$ for $k=1,2,\dots,d-2$. Let $(\by_1,\by_2,\dots,\by_d)$ be an optimal solution of~\eqref{eq:defsnorm}, i.e., $\TT(\by_1,\by_2,\dots,\by_d)=\|\TT\|_\sigma$.
For the vector $\bv_1 = \TT(\bullet,\by_{2},\by_{3},\dots,\by_{d})$, either $\|\bv_1\|=0$ or there exists $\bz_1\in\BH^{n_1}_5(2+\sqrt{5},3+\sqrt{5})$ such that $\bz_1^{\T}\frac{\bv_1}{\|\bv_1\|}\ge\tau_1$. In any case, one has
\[
\TT(\bz_1,\by_{2},\by_{3},\dots,\by_{d}) = \bz_1^{\T} \bv_1 \ge \tau_1 \|\bv_1\|
\ge \tau_1 \by_1^{\T}\bv_1=\tau_1 \TT(\by_1,\by_2,\dots,\by_{d}).
\]
Similarly, for every $k=2,3\dots,d-2$ that are chosen one by one increasingly, there exists $\bz_k\in\BH^{n_k}_5(2+\sqrt{5},3+\sqrt{5})$ such that
\[
\TT(\bz_1,\dots,\bz_{k-1},\bz_k,\by_{k+1},\dots,\by_{d}) = \bz_k^{\T} \bv_k \ge \tau_k \|\bv_k\|
\ge \tau_k \by_k^{\T}\bv_k=\tau_k \TT(\bz_1,\dots,\bz_{k-1},\by_k,\by_{k+1},\dots,\by_{d}),
\]
where $\bv_k=\TT(\bz_1,\dots,\bz_{k-1},\bullet,\by_{k+1},\dots,\by_{d})$. By applying the above inequalities recursively, we obtain
\[
\TT(\bz_1,\bz_2,\dots,\bz_{d-2},\by_{d-1},\by_{d})\ge \left(\prod_{k=1}^{d-2}\tau_k\right) \TT(\by_1,\by_2\dots,\by_{d})  = \left(\prod_{k=1}^{d-2}\tau_k\right) \|\TT\|_\sigma.
\]

The first step of Algorithm~\ref{alg:snorm} must have enumerated this $(\bz_1,\bz_2,\dots,\bz_{d-2})$ and computed corresponding $\bz_{d-1}\in\BS^{n_{d-1}}$ and $\bz_d\in\BS^{n_d}$, such that
\begin{align*}
\TT(\bz_1,\bz_2,\dots,\bz_d)& =\max_{\|\bx_{d-1}\|=\|\bx_{d}\|=1} \TT(\bz_1,\bz_2,\dots,\bz_{d-2},\bx_{d-1},\bx_{d})\\
&\ge   \TT(\bz_1,\bz_2,\dots,\bz_{d-2},\by_{d-1},\by_{d}) \\
&\ge \left(\prod_{k=1}^{d-2}\tau_k\right) \|\TT\|_\sigma.
\end{align*}
Finally, the best one found by the second step must be no less than the above $\TT(\bz_1,\bz_2,\dots,\bz_d)$.
\end{proof}

A closely related problem to the tensor spectral norm~\eqref{eq:defsnorm} is sphere constrained homogenous polynomial optimization $\max\left\{p(\bx):\|\bx\|=1\right\}$ where $p(\bx)$ a homogenous polynomial function of degree $d$. In other words, there is a symmetric (entries are invariant under permutations of indices) tensor $\TT\in\R^{n\times n\times\dots\times n}$ of order $d$ such that $p(\bx)=\TT(\bx,\bx,\dots,\bx)$. This is a widely applicable optimization problem but is also NP-hard when the degree of the polynomial $d\ge3$~\cite{N03}. The current best approximation bound for this problem is $\Omega\left(\left(\frac{\ln n}{n}\right)^{d/2-1}\right)$, obtained by {\color{black} a randomized algorithm~\cite{HJLZ14} or a deterministic but not implementable algorithm~\cite{S11} as it relies on the equivalence between convex optimization and membership oracle queries using the ellipsoid method. In fact, it is not difficult to obtain an easily implementable deterministic algorithm with the same best approximation bound with the help of a polarization formula~\cite[Lemma 1]{HLZ10} below.}
\begin{lemma} 
Let $\TT\in\R^{n\times n\times\dots\times n}$ be a symmetric tensor of order $d$ and $p(\bx)=\TT(\bx,\bx,\dots,\bx)$. If $\xi_i,\xi_2,\dots,\xi_d$ are i.i.d. symmetric Bernoulli random variables (taking values $\pm1$ with equal probability), then
\[
\ex \left[\left(\prod_{i=1}^d\xi_i\right)p\left(\sum_{k=1}^d\xi_k\bx_k\right) \right] =d!\TT(\bx_1,\bx_2,\dots,\bx_d).
\]
\end{lemma}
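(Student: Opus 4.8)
The plan is to expand $p\left(\sum_{k=1}^d\xi_k\bx_k\right)$ multilinearly, take the expectation, and use independence of the Bernoulli variables to kill all terms except the fully "mixed" one. First I would write $\TT(\bx,\bx,\dots,\bx)=\sum \TT(\bx_{j_1},\bx_{j_2},\dots,\bx_{j_d})$ where each index $j_1,\dots,j_d$ ranges over $\{1,2,\dots,d\}$, which follows from multilinearity of $\TT$ after substituting $\bx=\sum_k\xi_k\bx_k$; carrying the factor $\xi_{j_1}\xi_{j_2}\cdots\xi_{j_d}$ along, the left-hand side becomes $\ex\left[\sum_{j_1,\dots,j_d}\left(\prod_{i=1}^d\xi_i\right)\xi_{j_1}\cdots\xi_{j_d}\,\TT(\bx_{j_1},\dots,\bx_{j_d})\right]$.

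The key step is the observation that for each fixed tuple $(j_1,\dots,j_d)$, the product $\left(\prod_{i=1}^d\xi_i\right)\xi_{j_1}\cdots\xi_{j_d}$ has expectation $0$ unless each variable $\xi_i$ appears an \emph{even} number of times in it. Since each $\xi_i$ already appears once from $\prod_{i=1}^d\xi_i$, we need each $i\in\{1,\dots,d\}$ to appear an odd number of times among $j_1,\dots,j_d$; because there are exactly $d$ slots and $d$ distinct indices, each must appear exactly once, i.e. $(j_1,\dots,j_d)$ is a permutation of $(1,2,\dots,d)$. For such a permutation, $\left(\prod_{i=1}^d\xi_i\right)\xi_{j_1}\cdots\xi_{j_d}=\prod_{i=1}^d\xi_i^2=1$, so its expectation is $1$, and the corresponding term contributes $\TT(\bx_{j_1},\bx_{j_2},\dots,\bx_{j_d})$. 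Here I would invoke symmetry of $\TT$: $\TT(\bx_{\pi(1)},\dots,\bx_{\pi(d)})=\TT(\bx_1,\dots,\bx_d)$ for every permutation $\pi$, so summing over all $d!$ permutations gives $d!\,\TT(\bx_1,\bx_2,\dots,\bx_d)$, as claimed.

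There is no real obstacle here — it is a routine moment computation — but the one place to be careful is the combinatorial bookkeeping: making sure the parity argument is applied to the \emph{total} exponent of each $\xi_i$ (the built-in factor plus the occurrences among the $j$'s), and making sure that when a tuple is a permutation the factor is genuinely $+1$ rather than $\pm1$. It is worth noting explicitly that symmetry of $\TT$ is what lets all $d!$ permutation terms collapse to a single value; without it one would only get the symmetrization of $\TT$ on the right-hand side, which happens to coincide with $\TT$ precisely because $\TT$ is symmetric.
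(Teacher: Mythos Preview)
Your argument is correct and is exactly the standard polarization computation: expand by multilinearity, use that $\ex[\xi_i^m]$ is $1$ for even $m$ and $0$ for odd $m$ together with independence, observe that the only surviving tuples $(j_1,\dots,j_d)$ are permutations, and then invoke symmetry of $\TT$ to collapse the $d!$ terms. The parity bookkeeping you flag is handled correctly: if some index $i$ is absent from $(j_1,\dots,j_d)$ then the total exponent of $\xi_i$ is $1$, which already kills the term, so ``each $i$ appears an odd number of times'' forces each to appear exactly once.

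There is nothing to compare against in this paper: the lemma is quoted without proof and attributed to~\cite[Lemma~1]{HLZ10}. Your write-up is a complete and self-contained proof of that cited result.
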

We only state the results but leave the details to interested readers.
\begin{theorem}
Let $p(\bx)$ be a homogenous polynomial function of dimension $n$ and degree $d\ge3$. If $d$ is odd, then there is a deterministic polynomial-time approximation algorithm which outputs $\bz\in\BS^n$, such that
\[
p(\bz)\ge 0.3^{d-2}d!d^{-d}\left(\frac{\ln n}{n}\right)^{d/2-1} \max_{\|\bx\|=1}p(\bx).
\]
If $d$ is even, then there is a deterministic polynomial-time approximation algorithm which outputs $\bz\in\BS^n$, such that
\[
p(\bz)-\min_{\|\bx\|=1}p(\bx) \ge 0.3^{d-2}d!d^{-d}\left(\frac{\ln n}{n}\right)^{d/2-1} \left(\max_{\|\bx\|=1}p(\bx)-\min_{\|\bx\|=1}p(\bx)\right).
\]
\end{theorem}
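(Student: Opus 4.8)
The plan is to reduce sphere-constrained homogeneous polynomial optimization to the tensor spectral norm problem already solved by Algorithm~\ref{alg:snorm}, and then transfer a good multilinear solution to the polynomial by the polarization formula in the lemma above. First I would write $p(\bx)=\TT(\bx,\bx,\dots,\bx)$ for the symmetric tensor $\TT\in\R^{n\times n\times\dots\times n}$ of order $d$ determined by $p$, and run Algorithm~\ref{alg:snorm} on $\TT$; since every mode has dimension $n$, Theorem~\ref{thm:snorm} guarantees unit vectors $\bz_1,\dots,\bz_d$ with $\TT(\bz_1,\bz_2,\dots,\bz_d)\ge 0.3^{d-2}\left(\tfrac{\ln n}{n}\right)^{d/2-1}\|\TT\|_\sigma\ge0$, using that $\left(\prod_{k=1}^{d-2}\tfrac{\ln n}{n}\right)^{1/2}=\left(\tfrac{\ln n}{n}\right)^{d/2-1}$. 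I would also record the two elementary bounds $\|\TT\|_\sigma\ge\max_{\|\bx\|=1}p(\bx)$ and $\|\TT\|_\sigma\ge-\min_{\|\bx\|=1}p(\bx)$, both obtained by feeding the diagonal points $(\bx,\bx,\dots,\bx)$ and $(-\bx,\bx,\dots,\bx)$ into the spectral-norm maximization; in particular $\|\TT\|_\sigma\ge\tfrac12\big(\max_{\|\bx\|=1}p(\bx)-\min_{\|\bx\|=1}p(\bx)\big)$.

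With $d$ treated as a fixed constant, so that the $O(n^{3(d-2)})$ enumeration inside Algorithm~\ref{alg:snorm} followed by a constant-size sign enumeration stays polynomial, I would enumerate all $2^d$ sign vectors $\xi\in\{\pm1\}^d$, form $\bv_\xi=\sum_{k=1}^d\xi_k\bz_k$, and output $\bv_\xi/\|\bv_\xi\|$ for the pattern achieving the largest value of $p(\bv_\xi)$. For odd $d$: the polarization formula gives $\ex[(\prod_i\xi_i)\,p(\bv_\xi)]=d!\,\TT(\bz_1,\dots,\bz_d)\ge0$ over a uniform $\xi$, so some pattern has $(\prod_i\xi_i)\,p(\bv_\xi)\ge d!\,\TT(\bz_1,\dots,\bz_d)$; replacing that $\xi$ by $-\xi$ negates $\prod_i\xi_i$ and, by odd homogeneity, $p(\bv_\xi)$ as well, leaving the product fixed, so we may assume $\prod_i\xi_i=1$ and hence $\max_\xi p(\bv_\xi)\ge d!\,\TT(\bz_1,\dots,\bz_d)$. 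For the maximizing $\xi$ (the case $\bv_\xi=\mathbf 0$ being vacuous since then the right side is $0$), the triangle inequality $\|\bv_\xi\|\le\sum_k\|\bz_k\|=d$ and homogeneity give $p\big(\bv_\xi/\|\bv_\xi\|\big)=p(\bv_\xi)/\|\bv_\xi\|^d\ge d^{-d}\,d!\,\TT(\bz_1,\dots,\bz_d)$; chaining with the first paragraph yields $p(\bz)\ge 0.3^{d-2}\,d!\,d^{-d}\left(\tfrac{\ln n}{n}\right)^{d/2-1}\max_{\|\bx\|=1}p(\bx)$.

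For even $d$ the flip $\xi\mapsto-\xi$ no longer helps because $p(-\bv)=p(\bv)$, which is precisely why the guarantee becomes a spread bound. Running the same procedure, the polarization formula instead gives $\max_\xi p(\bv_\xi)-\min_\xi p(\bv_\xi)\ge 2d!\,\TT(\bz_1,\dots,\bz_d)$; normalizing the maximizing and minimizing $\bv_\xi$ and using $p(\bv)\le d^d\max_{\|\bx\|=1}p(\bx)$ whenever $p(\bv)\ge0$ together with $p(\bv)\ge d^d\min_{\|\bx\|=1}p(\bx)$ whenever $p(\bv)\le0$ (both from $\|\bv\|\le d$ and homogeneity), and then invoking $\|\TT\|_\sigma\ge\tfrac12\big(\max_{\|\bx\|=1}p(\bx)-\min_{\|\bx\|=1}p(\bx)\big)$, gives the claimed $p(\bz)-\min_{\|\bx\|=1}p(\bx)\ge 0.3^{d-2}\,d!\,d^{-d}\left(\tfrac{\ln n}{n}\right)^{d/2-1}\big(\max_{\|\bx\|=1}p(\bx)-\min_{\|\bx\|=1}p(\bx)\big)$ for the output $\bz$. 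The main obstacle is exactly this even-degree bookkeeping: one has to check that the best and worst sign patterns straddle zero in the right way so that the $\|\bv_\xi\|\le d$ normalization degrades the estimate by precisely the factor $d^{-d}$ and the constants $d!$ and $d^{-d}$ emerge as stated; everything else is a routine chaining of Theorem~\ref{thm:snorm} with the polarization formula.
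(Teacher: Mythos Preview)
The paper does not prove this theorem; it only says the details ``are not difficult'' given Algorithm~\ref{alg:snorm} and the polarization lemma. Your plan---run Algorithm~\ref{alg:snorm} on the symmetric tensor, then de-symmetrize via the polarization identity and a $2^d$ sign search---is exactly the intended route, and for odd $d$ your argument is correct and complete: the sign flip $\xi\mapsto-\xi$ lets you force $\prod_i\xi_i=1$, and then $\|\bv_\xi\|\le d$ with homogeneity gives the factor $d!d^{-d}$ cleanly.

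The even case, however, has a real gap that you flag but do not close. Your chain needs the normalized best and worst candidates to satisfy $p(\bz^+)\ge M/d^d$ and $p(\bz^-)\le m/d^d$, which in turn requires the unnormalized extremes $M=\max_\xi p(\bv_\xi)$ and $m=\min_\xi p(\bv_\xi)$ to straddle zero. But if $p$ is positive definite (say $p(\bx)=\|\bx\|^d+\epsilon q(\bx)$ with small $\epsilon$), every $p(\bv_\xi)$ is strictly positive, so $m>0$ and the straddling fails; dividing a positive $m$ by a possibly small $\|\bv_{\xi_m}\|^d$ can make $p(\bz^-)$ large rather than small, and your inequality $p(\bz^+)-p(\bz^-)\ge (M-m)/d^d$ breaks down. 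The two auxiliary inequalities you quote, $p(\bv)\le d^d p_{\max}$ and $p(\bv)\ge d^d p_{\min}$, go in the wrong direction to rescue this---they bound the unnormalized values, not the normalized ones. In short, saying ``one has to check that the best and worst sign patterns straddle zero in the right way'' is precisely the point where the argument is incomplete: this is not a bookkeeping detail but a case that genuinely has to be handled separately (for instance by treating the sign of $\prod_i\xi_i^*$ at the polarization step and, in the positive-definite sub-case, arguing directly from $p(\bz)\ge p_{\min}$ together with the lower bound on $p(\bv_{\xi^*})$).
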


\subsection{Approximation bound for tensor nuclear norm}\label{sec:nnorm}

We now study the approximation for the tensor nuclear norm.
\begin{definition}\label{def:nnorm}
For a given tensor $\TT\in\R^{n_1\times n_2\times\dots\times n_d}$, the nuclear norm of $\TT$ is defined as
\begin{equation} \label{eq:ndecomp}
\|\TT\|_*:=\min\left\{\sum_{i=1}^r|\lambda_i| : \TT=\sum_{i=1}^r\lambda_i\, \bx_1^{(i)}\otimes\bx_2^{(i)}\otimes\dots \otimes\bx_d^{(i)}, \|\bx_k^{(i)}\|=1\mbox{ for all $i$ and $k$}, \, r\in\BN \right\}.
\end{equation}
\end{definition}

{\color{black}From~\eqref{eq:ndecomp}, we see that the tensor nuclear norm is the minimum of the sum of Frobenius norms of rank-one tensors in any CP decomposition. A CP decomposition of $\TT$ that attains $\|\TT\|_*$ is called a nuclear decomposition of $\TT$~\cite{FL18}}. When $d=2$, the tensor nuclear norm is reduced to the matrix nuclear norm, which is the sum of all singular values. Similar to the role of matrix nuclear norm used in many matrix rank minimization problems, the tensor nuclear norm is the convex envelope of the tensor rank and is widely used in tensor completions~\cite{GRY11,YZ16}.

The tensor nuclear norm is the dual norm to the tensor spectral norm, and vice versa, whose proof can be found in~\cite{LC14,CL20}.
\begin{lemma} \label{thm:dual}
For given tensors $\TT$ and $\Z$ in a same tensor space, it follows that
\begin{equation}
\|\TT\|_{\sigma}=\max_{\|\Z\|_*\le 1}\langle\TT,\Z\rangle
\mbox{ and }
\|\TT\|_*=\max_{\|\Z\|_{\sigma}\le 1}\langle\TT,\Z\rangle. \label{eq:def2nnorm}
\end{equation}
\end{lemma}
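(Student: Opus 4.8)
The plan is to view both identities as the standard biduality between an ``atomic'' (gauge) norm and its dual, realized here with the set $\mathcal{A}\subseteq\R^{n_1\times\dots\times n_d}$ of unit rank-one tensors $\bx_1\otimes\dots\otimes\bx_d$ with $\|\bx_k\|=1$ for $k=1,\dots,d$. First I would record three properties of $\mathcal{A}$: it is compact, being the continuous image of the product of unit spheres; it is symmetric, $-\mathcal{A}=\mathcal{A}$, since a sign may be absorbed into any one factor; and it spans the whole tensor space, since it contains all standard basis tensors $\e_{i_1}\otimes\dots\otimes\e_{i_d}$. Therefore $\conv\mathcal{A}$ is a compact symmetric convex set with the origin in its interior, hence the closed unit ball of a norm, which I will call $N$.

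Next I would check that $N=\|\bullet\|_*$. Given $\TT=\sum_{i=1}^r\lambda_i\Z_i$ with $\Z_i\in\mathcal{A}$ and $\sum_i|\lambda_i|=t$, absorbing $\sign\lambda_i$ into $\Z_i$ exhibits $\TT$ as $t$ times a convex combination of elements of $\mathcal{A}$, i.e., $\TT\in t\,\conv\mathcal{A}$; conversely any $\TT\in t\,\conv\mathcal{A}$ yields such a decomposition. Hence $\|\TT\|_*$ is exactly the Minkowski gauge $\inf\{t>0:\TT\in t\,\conv\mathcal{A}\}$, which is $N(\TT)$; in particular the infimum in~\eqref{eq:ndecomp} is finite and attained once one restricts $r\le\dim\R^{n_1\times\dots\times n_d}+1$ by Carath\'{e}odory's theorem, so that the minimum is over a compact set.

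The dual norm $N^*$ is the support function of the unit ball of $N$, so for any $\TT$ we get $N^*(\TT)=\max_{\Z\in\conv\mathcal{A}}\langle\TT,\Z\rangle=\max_{\Z\in\mathcal{A}}\langle\TT,\Z\rangle$, the second equality because a linear functional evaluated at a convex combination of points of $\mathcal{A}$ never exceeds its maximum over $\mathcal{A}$. By~\eqref{eq:mform}, the right-hand side equals $\max\{\TT(\bx_1,\dots,\bx_d):\|\bx_k\|=1\}=\|\TT\|_\sigma$, so $N^*=\|\bullet\|_\sigma$. This is already the first identity, $\|\TT\|_\sigma=\max_{\|\Z\|_*\le1}\langle\TT,\Z\rangle$. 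For the second, I would invoke biduality of norms on a finite-dimensional space, $N^{**}=N$ (equivalently, the bipolar theorem $(\conv\mathcal{A})^{\circ\circ}=\conv\mathcal{A}$, valid since $\conv\mathcal{A}$ is closed, convex and contains the origin): then $\|\TT\|_*=N(\TT)=N^{**}(\TT)=\max_{N^*(\Z)\le1}\langle\TT,\Z\rangle=\max_{\|\Z\|_\sigma\le1}\langle\TT,\Z\rangle$.

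The only steps that are not pure bookkeeping are (i) showing $\conv\mathcal{A}$ genuinely has nonempty interior, which is where the spanning property of the rank-one unit tensors is used, and (ii) attainment of the infimum in~\eqref{eq:ndecomp}, supplied by the Carath\'{e}odory bound on $r$; I expect (i) to be the conceptual crux, since without it $N$ would fail to be a norm and the whole duality collapses. Everything downstream is the textbook norm-duality machinery, and as a sanity check the first identity also has a direct one-line verification: any $\Z=\sum_i\lambda_i\Z_i$ with $\sum_i|\lambda_i|\le1$ satisfies $\langle\TT,\Z\rangle=\sum_i\lambda_i\langle\TT,\Z_i\rangle\le\sum_i|\lambda_i|\,\|\TT\|_\sigma\le\|\TT\|_\sigma$, with equality attained at the optimal rank-one tensor.
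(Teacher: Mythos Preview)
Your argument is correct. The paper does not supply its own proof of this lemma; it simply states that the spectral and nuclear norms are mutual duals and refers the reader to~\cite{LC14,CL20}. Your proposal fills in precisely the standard argument one finds in that literature: identify $\|\bullet\|_*$ with the Minkowski gauge of $\conv\mathcal{A}$ for the atom set $\mathcal{A}$ of unit rank-one tensors, compute its dual as the support function (which is $\|\bullet\|_\sigma$), and conclude by biduality in finite dimensions. The checks you flag---compactness and symmetry of $\mathcal{A}$, spanning so that $\conv\mathcal{A}$ has nonempty interior, and Carath\'{e}odory to guarantee attainment in~\eqref{eq:ndecomp}---are exactly the points that need verification, and you handle them correctly.
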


Computing the tensor nuclear norm is also NP-hard when $d\ge3$ showed by Friedland and Lim~\cite{FL18}. In fact, it is much harder than computing the tensor spectral norm. From the definition~\eqref{eq:ndecomp} finding a CP decomposition is not an easy task for a given $r$, and from the dual formulation~\eqref{eq:def2nnorm} checking the feasibility $\|\Z\|_{\sigma}\le 1$ is also NP-hard. Perhaps the only known method is due to Nie~\cite{N17}, which is based on the sum-of-squares relaxation and can only work for symmetric tensors of low dimensions. In terms of polynomial-time approximation bounds, the best bound is $\prod_{k=1}^{d-2}\frac{1}{\sqrt{n_k}}$. There are two methods to achieve this bound, one is via matrix flattenings of the tensor~\cite{H15} and the other is via partitioning the tensor into matrix slices~\cite{L16}. This bound is {\color{black}worse} than the best one for the tensor spectral norm. Let us now bridge the gaps using an idea similar to grid sampling in~\cite{HJL22}.

To better illustrate our main idea, we discuss the details for a tensor $\TT\in\R^{n_1\times n_2\times n_3}$ of order three. According to the dual formulation~\eqref{eq:def2nnorm},
\begin{align}
\|\TT\|_*& = \max \left\{\langle\TT,\Z\rangle: \|\Z\|_{\sigma}\le 1\right\}\nonumber \\
&= \max \left\{\langle\TT,\Z\rangle: \Z(\bx,\by,\bz)\le1 \mbox{ for all } \|\bx\|=\|\by\|=\|\bz\|=1\right\} \nonumber \\
&= \max \left\{\langle\TT,\Z\rangle: \max_{\|\by\|=\|\bz\|=1}\Z(\bx,\by,\bz)\le1 \mbox{ for all } \|\bx\|=1\right\}. \label{eq:sdp0}
\end{align}
Notice that for a given $\bx$, the constraint $\max_{\|\by\|=\|\bz\|=1} \Z(\bx,\by,\bz) \le 1$ is the same to $\|\Z(\bx,\bullet,\bullet)\|_\sigma\le1$ or the largest singular value of the matrix $\Z(\bx,\bullet,\bullet)$ is no more than one. This can be equivalently represented by $I\succeq\Z(\bx, \bullet ,\bullet)\Z(\bx, \bullet ,\bullet)^{\T}$. Here a symmetric matrix $A\succeq O$ {\color{black}where $O$ is a zero matrix} means that $A$ is positive semidefinite and $A\succeq B$ means that $A-B \succeq O$. Applying the Schur complement, we then have
\[
\max_{\|\by\|=\|\bz\|=1} \Z(\bx,\by,\bz) \le 1
\Longleftrightarrow I\succeq\Z(\bx, \bullet ,\bullet)\Z(\bx, \bullet ,\bullet)^{\T} \Longleftrightarrow
\left[\begin{array}{cc}
I & \Z(\bx, \bullet ,\bullet) \\
\Z(\bx, \bullet ,\bullet)^{\T} & I
\end{array} \right] \succeq O.
\]
By combining with~\eqref{eq:sdp0} we obtain an equivalent formulation of the tensor nuclear norm
\begin{equation}\label{eq:sdp2}
\|\TT\|_* = \max \left\{\langle\TT,\Z\rangle: \left[\begin{array}{cc}
I & \Z(\bx, \bullet ,\bullet) \\
\Z(\bx, \bullet ,\bullet)^{\T} & I
\end{array} \right] \succeq O \mbox{ for all } \|\bx\|= 1\right\}.
\end{equation}

Obviously there is no way to enumerate all $\bx$ in $\BS^{n_1}$ in~\eqref{eq:sdp2} but the sphere covering is indeed helpful in this scenario. If we replace $\|\bx\|=1$ with $\bx\in\BH^{n_1}$ for some deterministic $\BH^{n_1}\in\BT(n_1,\tau,O({n_1}^\alpha))$ with some university constant $\alpha$,~\eqref{eq:sdp2} is then relaxed to
\[
\max \left\{\langle\TT,\Z\rangle: \left[\begin{array}{cc}
I & \Z(\bx, \bullet ,\bullet) \\
\Z(\bx, \bullet ,\bullet)^{\T} & I
\end{array} \right] \succeq O \mbox{ for all } \bx\in\BH^{n_1}\right\}.
\]
This becomes a semidefinte program with $O({n_1}^\alpha)$ number of positive semidefinite constraints.

\begin{algorithm} \label{alg:3nnorm}
Given $\TT\in\R^{n_1\times n_2\times n_3}$, find approximate nuclear norm of $\TT$.

\vspace{-0.2cm}\noindent\hrulefill
\begin{enumerate}
\item Choose a $\tau$-hitting set $\BH^{n_1}\in\BT(n_1,\tau,O({n_1}^\alpha))$ and solve the semidefinite program
\begin{equation}\label{eq:sdp3}
u = \max \left\{\langle\TT,\Z\rangle: \left[\begin{array}{cc}
I & \Z(\bx, \bullet ,\bullet) \\
\Z(\bx, \bullet ,\bullet)^{\T} & I
\end{array} \right] \succeq O \mbox{ for all } \bx\in\BH^{n_1}\right\}.
\end{equation}
\item Output $\tau u$.
\end{enumerate}
\vspace{-0.2cm}\hrulefill
\end{algorithm}

\begin{theorem}\label{thm:3nnorm}
For any $\BH^{n_1}\in\BT(n_1,\tau,O({n_1}^\alpha))$, Algorithm~\ref{alg:3nnorm} is a deterministic polynomial-time algorithm that approximates $\|\TT\|_*$ with a worst-case approximation bound $\tau$.
\end{theorem}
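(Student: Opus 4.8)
The plan is to verify two things: that Algorithm~\ref{alg:3nnorm} runs in polynomial time, and that its output $\tau u$ sandwiches between $\tau\|\TT\|_*$ and $\|\TT\|_*$. Polynomiality is immediate: the semidefinite program~\eqref{eq:sdp3} has $O(n_1^\alpha)$ positive semidefinite constraints of fixed size $(n_2+n_3)\times(n_2+n_3)$, the decision variable $\Z$ has $n_1n_2n_3$ entries, and each constraint matrix depends linearly on $\Z$ (since $\Z(\bx,\bullet,\bullet)$ is linear in $\Z$ for fixed $\bx$), so~\eqref{eq:sdp3} is a genuine SDP of polynomial size solvable to any fixed accuracy in polynomial time. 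The nontrivial content is the approximation bound, which splits into an upper bound $\tau u \le \|\TT\|_*$ and a lower bound $\tau u \ge \tau\|\TT\|_*$.

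For the \emph{upper bound}, I would observe that~\eqref{eq:sdp3} is a relaxation of the exact formulation~\eqref{eq:sdp2}: replacing the constraint ``for all $\|\bx\|=1$'' by ``for all $\bx\in\BH^{n_1}$'' enlarges the feasible set, hence $u\ge\|\TT\|_*$. That inequality goes the wrong way, so the real work is to show the feasible $\Z$ attaining $u$ is not too large. The key step is: if $\Z$ is feasible for~\eqref{eq:sdp3}, then $\frac{1}{?}\Z$ — more precisely $\tau\Z$ — is feasible for~\eqref{eq:sdp2}, i.e.\ $\|\tau\Z\|_\sigma\le 1$. To see this, take any $\bx\in\BS^{n_1}$; since $\BH^{n_1}$ is a $\tau$-hitting set there is $\bz\in\BH^{n_1}$ with $\bz^{\T}\bx\ge\tau$ (handling separately the degenerate case where the relevant vector vanishes). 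I would then relate $\|\Z(\bx,\bullet,\bullet)\|_\sigma$ to $\|\Z(\bz,\bullet,\bullet)\|_\sigma\le 1$. Here the subtle point — and what I expect to be the main obstacle — is that $\bx\mapsto\Z(\bx,\bullet,\bullet)$ is linear but \emph{not} norm-decreasing in an obvious way: knowing $\bz^{\T}\bx\ge\tau$ alone does not bound $\|\Z(\bx,\bullet,\bullet)\|_\sigma$ by $\frac1\tau\|\Z(\bz,\bullet,\bullet)\|_\sigma$ unless one also controls $\Z(\bw,\bullet,\bullet)$ for $\bw\perp\bz$. The resolution should mirror the spectral-norm argument in Theorem~\ref{thm:snorm}: for the optimal $\by,\bt$ in $\Z(\bx,\by,\bt)=\|\Z(\bx,\bullet,\bullet)\|_\sigma$, the vector $\bv=\Z(\bullet,\by,\bt)\in\R^{n_1}$ has some $\bz\in\BH^{n_1}$ with $\bz^{\T}\frac{\bv}{\|\bv\|}\ge\tau$, giving $\|\Z(\bz,\bullet,\bullet)\|_\sigma\ge\Z(\bz,\by,\bt)=\bz^{\T}\bv\ge\tau\|\bv\|\ge\tau\,\bx^{\T}\bv=\tau\|\Z(\bx,\bullet,\bullet)\|_\sigma$. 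Thus feasibility of $\Z$ for~\eqref{eq:sdp3} forces $\|\Z(\bx,\bullet,\bullet)\|_\sigma\le\frac1\tau$ for every unit $\bx$, i.e.\ $\|\Z\|_\sigma\le\frac1\tau$, so $\tau\Z$ is feasible for~\eqref{eq:sdp2} and $\langle\TT,\tau\Z\rangle\le\|\TT\|_*$, whence $\tau u\le\|\TT\|_*$.

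For the \emph{lower bound}, I would use that~\eqref{eq:sdp3} is a relaxation: the optimal $\Z^\star$ for the exact problem~\eqref{eq:sdp2} (which equals $\|\TT\|_*$) is feasible for~\eqref{eq:sdp3}, so $u\ge\langle\TT,\Z^\star\rangle=\|\TT\|_*$, giving $\tau u\ge\tau\|\TT\|_*$. Combining the two bounds yields $\tau\|\TT\|_*\le\tau u\le\|\TT\|_*$, which is exactly the claimed approximation bound. Finally I would remark that the argument extends verbatim from order three to general order $d$ by applying the hitting-set reduction to the first $d-2$ modes one at a time, exactly as in Section~\ref{sec:snorm}, with the product of the individual $\tau_k$'s as the resulting bound; the SDP in~\eqref{eq:sdp3} is then imposed for all tuples $(\bx_1,\dots,\bx_{d-2})\in\BH^{n_1}\times\cdots\times\BH^{n_{d-2}}$, keeping the last two modes as the matrix spectral-norm constraint. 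The only place requiring care in the general case is bookkeeping the successive losses of factor $\tau_k$, which is routine.
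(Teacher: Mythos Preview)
Your proposal is correct and follows essentially the same route as the paper: you show $u\ge\|\TT\|_*$ because the SDP relaxes~\eqref{eq:sdp2}, and then establish $\|\tau\Y\|_\sigma\le 1$ by the hitting-set argument applied to $\bv=\Y(\bullet,\by,\bt)$ (for suitable $\by,\bt$), so that $\tau\Y$ is feasible for the dual formulation and $\tau u\le\|\TT\|_*$. The paper's proof phrases the key step globally --- proving $\max_{\bx\in\BH^{n_1}}\|\Y(\bx,\bullet,\bullet)\|_\sigma\ge\tau\|\Y\|_\sigma$ directly --- whereas you fix an arbitrary unit $\bx$ and bound $\|\Y(\bx,\bullet,\bullet)\|_\sigma$ individually; the two are equivalent and your version arguably makes the ``subtle point'' you flagged disappear more transparently.
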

\begin{proof}
Denote $\Y$ to be an optimal solution of~\eqref{eq:sdp3}. It is easy to see that~\eqref{eq:sdp3} is a relaxation of the maximization problem~\eqref{eq:sdp2} since $\BH^{n_1}\subseteq\BS^{n_1}$. Therefore, $u=\langle \TT,\Y\rangle\ge\|\TT\|_*$.

For any $\by,\bz$, denote $\bv=\Y(\bullet,\by,\bz)$ and we have either $\|\bv\|=0$ or there exists $\bx\in\BH^{n_1}$ such that $\bx^{\T}\frac{\bv}{\|\bv\|}\ge\tau$, both leading to $\Y(\bx,\by,\bz)=\bx^{\T}\bv\ge\tau\|\bv\|=\tau \|\Y(\bullet,\by,\bz)\|$. Therefore,
\[
\max_{\bx \in \BH^{n_1},\|\by\|=\|\bz\|=1} \Y(\bx,\by,\bz)
\ge \tau
\max_{\|\by\|=\|\bz\|=1} \|\Y(\bullet,\by,\bz)\|
=\tau \max_{\|\bx\|=\|\by\|=\|\bz\|=1}\Y(\bx,\by,\bz)=\tau\|\Y\|_\sigma.
\]
By the feasibility of $\Y$ in~\eqref{eq:sdp3}, $\|\Y(\bx,\bullet,\bullet)\|_\sigma\le1$ for all $\bx\in\BH^{n_1}$, implying that
\[\|\tau\Y\|_\sigma=\tau\|\Y\|_\sigma\le \max_{\bx \in \BH^{n_1},\|\by\|=\|\bz\|=1} \Y(\bx,\by,\bz) =
\max_{\bx \in \BH^{n_1}} \| \Y(\bx,\bullet,\bullet)\|_\sigma \le 1.
\]
This means that $\tau\Y$ is a feasible solution to the dual formulation~\eqref{eq:def2nnorm}, and so
\[
\|\TT\|_*=\max_{\|\Z\|_\sigma\le 1}\langle \TT,\Z\rangle \ge \langle \TT,\tau\Y\rangle = \tau \langle \TT,\Y\rangle=\tau u\ge\tau\|\TT\|_*.
\]
\end{proof}

Compared to Algorithm~\ref{alg:snorm} that requires (possibly large) enumeration and then comparison, Algorithm~\ref{alg:3nnorm} only needs to solve one semidefinite program, albeit the size is large if $\BH^{n_1}$ is large. We emphasize that $\BH^{n_1}$ in Algorithm~\ref{alg:3nnorm} {\color{black} needs to be a deterministic $\tau$-hitting set in order to archive a feasible solution of $\|\Z\|_\sigma\le1$ in~\eqref{eq:def2nnorm} with the desired approximation bound $\tau$ in Theorem~\ref{thm:3nnorm}. Although a randomized hitting set $\BH^{n_1}_1(\gamma,\epsilon)$ can be used in Algorithm~\ref{alg:3nnorm}, it is likely that $\tau\Y$ in the proof of Theorem~\ref{thm:3nnorm} is not feasible to~\eqref{eq:def2nnorm}. However, $\langle \TT,\Y\rangle$ could still be a good upper bound of $\|\TT\|_*$ in this case.} Let us now extend Algorithm~\ref{alg:3nnorm} to a general tensor of order $d$.

\begin{algorithm} \label{alg:nnorm}
Given $\TT\in\R^{n_1\times n_2\times \dots\times n_d}$, find approximate nuclear norm of $\TT$.

\vspace{-0.2cm}\noindent\hrulefill
\begin{enumerate}
\item Choose $\BH^{n_k}\in\BT(n_k,\tau_k,O({n_k}^{\alpha_k}))$ for $k=1,2,\dots,d-2$ and solve the semidefinite program
\[
u = \max \left\{\langle\TT,\Z\rangle: \left[\begin{array}{cc}
I & \Z(\bx_1,\bx_2,\dots,\bx_{d-2}, \bullet ,\bullet) \\
\Z(\bx_1,\bx_2,\dots,\bx_{d-2}, \bullet ,\bullet)^{\T} & I
\end{array} \right] \succeq O \mbox{ for all } \bx_k\in\BH^{n_k}\right\}.
\]
\item Output $u \prod_{k=1}^{d-2}\tau_k$.
\end{enumerate}
\vspace{-0.2cm}\hrulefill
\end{algorithm}

We state the final theorem that obtains an improved approximation bound for the tensor nuclear norm using the hitting set $\BH^n_5(2+\sqrt{5},3+\sqrt{5})$ in~\eqref{eq:h6}. This bound finally matches the current best one for the tensor spectral norm; see Theorem~\ref{thm:snorm}. The proof is similar to that of Theorem~\ref{thm:3nnorm} and is omitted.

\begin{theorem}\label{thm:nnorm}
By choosing $\BH^{n_k}_5(2+\sqrt{5},3+\sqrt{5})$ for $k=1,2,\dots,d-2$, Algorithm~\ref{alg:nnorm} is a deterministic polynomial-time algorithm that approximates $\|\TT\|_*$ with a worst-case approximation bound $0.3^{d-2}\left(\prod_{k=1}^{d-2}\frac{\ln n_k}{n_k}\right)^{\frac{1}{2}}$ for any $\TT\in\R^{n_1\times n_2\times\dots\times n_d}$.
\end{theorem}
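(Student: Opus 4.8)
The plan is to carry the proof of Theorem~\ref{thm:3nnorm} through mode by mode, replacing the single auxiliary contraction of the order-three case by a recursive peeling over the first $d-2$ modes. First I would record the order-$d$ analogue of the semidefinite reformulation~\eqref{eq:sdp2}. Starting from $\|\TT\|_*=\max\{\langle\TT,\Z\rangle:\|\Z\|_\sigma\le1\}$ in Lemma~\ref{thm:dual}, observe that $\|\Z\|_\sigma\le1$ holds if and only if $\|\Z(\bx_1,\dots,\bx_{d-2},\bullet,\bullet)\|_\sigma\le1$ for all unit vectors $\bx_1,\dots,\bx_{d-2}$, and each such matrix-spectral-norm bound is, via the Schur complement exactly as in~\eqref{eq:sdp2}, the linear matrix inequality
\[
\left[\begin{array}{cc}
I & \Z(\bx_1,\dots,\bx_{d-2},\bullet,\bullet)\\
\Z(\bx_1,\dots,\bx_{d-2},\bullet,\bullet)^{\T} & I
\end{array}\right]\succeq O.
\]
Restricting each $\bx_k$ to the finite set $\BH^{n_k}_5(2+\sqrt{5},3+\sqrt{5})$ only discards constraints, so the program in Algorithm~\ref{alg:nnorm} is a relaxation of this reformulation, and its optimal value obeys $u=\langle\TT,\Y\rangle\ge\|\TT\|_*$ for an optimizer $\Y$.

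Next I would show that $\big(\prod_{k=1}^{d-2}\tau_k\big)\Y$ is feasible for the dual formulation~\eqref{eq:def2nnorm}, where $\tau_k=0.3\sqrt{\ln n_k/n_k}$ is the deterministic hitting ratio supplied by~\eqref{eq:h6}. The engine is a one-mode-at-a-time estimate: for each fixed collection of the remaining vectors, $\max_{\|\bx_k\|=1}\Y(\dots,\bx_k,\dots)$ equals the Euclidean norm of the vector obtained by contracting $\Y$ in mode $k$, and the $\tau_k$-hitting property of $\BH^{n_k}_5$ (or a vanishing contraction) furnishes some $\bz_k\in\BH^{n_k}_5$ realizing at least a $\tau_k$ fraction of that norm. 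Applying this for $k=1,2,\dots,d-2$ in turn yields
\[
\|\Y\|_\sigma=\max_{\|\bx_k\|=1,\,1\le k\le d}\Y(\bx_1,\dots,\bx_d)\;\le\;\Big(\prod_{k=1}^{d-2}\tau_k\Big)^{-1}\max_{\substack{\bx_k\in\BH^{n_k}_5,\;1\le k\le d-2\\\|\bx_{d-1}\|=\|\bx_d\|=1}}\Y(\bx_1,\dots,\bx_d),
\]
and the inner maximum over the last two modes is exactly $\max_{\bx_k\in\BH^{n_k}_5}\|\Y(\bx_1,\dots,\bx_{d-2},\bullet,\bullet)\|_\sigma$, which is at most $1$ by the feasibility of $\Y$. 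Hence $\big(\prod_{k=1}^{d-2}\tau_k\big)\Y$ has spectral norm at most $1$, so $\|\TT\|_*\ge\big\langle\TT,\big(\prod_{k=1}^{d-2}\tau_k\big)\Y\big\rangle=\big(\prod_{k=1}^{d-2}\tau_k\big)u$. Combined with $u\ge\|\TT\|_*$, this gives $\big(\prod_{k=1}^{d-2}\tau_k\big)\|\TT\|_*\le\big(\prod_{k=1}^{d-2}\tau_k\big)u\le\|\TT\|_*$, which is the asserted bound because $\prod_{k=1}^{d-2}\tau_k=0.3^{d-2}\prod_{k=1}^{d-2}(\ln n_k/n_k)^{1/2}$. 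Polynomial-time solvability is then immediate: with $d$ fixed, the semidefinite program carries $\prod_{k=1}^{d-2}|\BH^{n_k}_5(2+\sqrt{5},3+\sqrt{5})|=\prod_{k=1}^{d-2}O({n_k}^3)$ linear matrix inequalities, a number polynomial in the input size, each of polynomial dimension.

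The step I expect to need the most care is the peeling estimate: verifying that at each stage the maximum over the already-chosen hitting-set vectors and the yet-to-be-chosen unit vectors can be interchanged with the single-mode maximization, so that the factors $\tau_k$ accumulate multiplicatively and the terminal quantity coincides precisely with the matrix spectral norm that feasibility of $\Y$ controls. This is a routine induction on $d$ once the order-$d$ Schur-complement reformulation is in place, but it must be written down carefully; everything else---the relaxation inequality and the final chain of inequalities---transfers almost verbatim from the proof of Theorem~\ref{thm:3nnorm}.
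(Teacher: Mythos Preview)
Your proposal is correct and follows exactly the approach the paper intends: the paper omits this proof entirely, stating only that it ``is similar to that of Theorem~\ref{thm:3nnorm},'' and your mode-by-mode peeling argument is precisely that similarity spelled out, mirroring also the recursive contraction used in the proof of Theorem~\ref{thm:snorm}. The care you flag for the peeling step is warranted but routine, and everything else transfers verbatim.
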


\subsection{Numerical performance of approximation methods}\label{sec:numerical}

{\color{black}
We now test the numerical performance of the methods for approximating tensors norms, in complement to the theoretical results established earlier. All the experiments are conducted on a linux server (Ubuntu 20.04) with an Intel Xeon Platinum 8358 @ 2.60GHz and 512GB of RAM. The computations are implemented in Python 3. The semidefinite optimization solver\footnote{https://docs.mosek.com/latest/pythonfusion/tutorial-sdo-shared.html} in COPT Fusion API for Python 9.3.13 is called whenever semidefinite programs are involved.

We first test Algorithm~\ref{alg:snorm} to approximate the tensor spectral norm using examples in Nie and Wang~\cite[Examples 3.12, 3.13 and 3.14]{NW14}. The semidefinite relaxation method in~\cite{NW14} works well in practice and usually finds optimal values. This also enable us to check the true approximation bounds in practice rather than the conservative theoretical bounds. The results for the first two examples are shown in Table~\ref{table:00}. For Example 3.13, the method in~\cite{NW14} calls the fmincon function in MATLAB for a local improvement. This is the benchmark optimal value used to compute the approximation bounds. We also apply the classic alternating least square (ALS) method~\cite{KB09} as a local improvement starting from the approximate solutions obtained by Algorithm~\ref{alg:snorm}. Whenever a local improvement method is applied, the corresponding indicator is appended with a `+' sign.

\begin{table}[h]
\centering
\small
\setlength{\tabcolsep}{5pt}
        \centering
        \begin{tabular}{|l|l|cccccc|}
            \hline
Example &Method & CPU & CPU+ & Value  & Value+ & Bound & Bound+\\
            \hline
Ex 3.12&\cite{NW14} & 0.703 &  & 2.8167  &  & 1.0000& \\
&Alg~\ref{alg:snorm} & 0.000 & 0.000 & 2.2076 & 2.8167 & 0.7837 & 1.0000\\
            \hline
Ex 3.13&\cite{NW14} & 0.545 & 0.612 & 0.9862 & 1.0000 & 0.9862 & 1.0000\\
&Alg~\ref{alg:snorm} & 0.000 & 0.250 & 0.8397  & 1.0000 & 0.8397 & 1.0000\\
     \hline
\end{tabular}
\caption{Numerical results for Examples 3.12 and 3.13 in~\cite{NW14}}\label{table:00}
\end{table}

The results for Example 3.14 in~\cite{NW14} are shown in Table~\ref{table:01}. In this example, the method in~\cite{NW14} obtained global optimality directly without applying the local improvement. We also listed the theoretical approximation bound $\sqrt{\frac{\ln n}{n}}$ (without showing the constant disguised under the $\Omega$) of our algorithm for comparison.

\begin{table}[h]
\small
\setlength{\tabcolsep}{5pt}
\centering
\begin{tabular}{|l|l|ccccccc|}
\hline
$n$ &Method & CPU & CPU+ & Value  & Value+ & Bound & Bound+ & $\sqrt{\ln n /n}$\\
 \hline
5 & \cite{NW14} & 0.997 &  & 6.0996 &  & 1.0000 &  & \\
 & Alg~\ref{alg:snorm} & 0.020 & 0.050 & 4.3058 & 6.0996 & 0.7059 & 1.0000 & 0.5674\\\hline
10 & \cite{NW14} & 1.411 &  & 14.7902 &  & 1.0000 &  & \\
 & Alg~\ref{alg:snorm} & 0.320 & 1.920 & 8.4779 & 14.7902 & 0.5732 & 1.0000 & 0.4799\\\hline
15 & \cite{NW14} & 3.696 &  & 25.4829 &  & 1.0000 &  & \\
 & Alg~\ref{alg:snorm} & 1.670 & 3.680 & 11.4022 & 25.4829 & 0.4474 & 1.0000 & 0.4249\\\hline
20 & \cite{NW14} & 8.763 &  & 33.7020 &  & 1.0000 & & \\
 & Alg~\ref{alg:snorm} & 4.870 & 20.120 & 13.3617 & 33.7020 & 0.3964 & 1.0000 & 0.3870\\\hline
25 & \cite{NW14} & 37.535 &  & 46.7997 &  & 1.0000 &  & \\
 & Alg~\ref{alg:snorm} & 50.310 & 110.000 & 19.5674 & 46.7997 & 0.4181 & 1.0000 & 0.3588\\\hline
30 & \cite{NW14} & 52.994 &  & 64.9106 &  & 1.0000 &  & \\
 & Alg~\ref{alg:snorm} & 101.380 & 152.160 & 24.5234 & 64.9106 & 0.3778 & 1.0000 & 0.3367\\\hline
35 & \cite{NW14} & 111.547 &  & 80.7697 &  & 1.0000 &  & \\
 & Alg~\ref{alg:snorm} & 197.510 & 350.360 & 28.6220 & 80.7697 & 0.3543 & 1.0000 & 0.3187\\\hline
40 & \cite{NW14} & 241.565 &  & 95.0878 &  & 1.0000 &   & \\
 & Alg~\ref{alg:snorm} & 362.230 & 548.350 & 33.7020 & 95.0878 & 0.3307 & 1.0000  & 0.3037\\
                \hline
        \end{tabular}
\caption{Numerical results for Example 3.14 in~\cite{NW14}}\label{table:01}
\end{table}

Observed from the numerical results of these three examples, Algorithm~\ref{alg:snorm} obviously fails to obtain optimality in contrast to a practical method, but with the help of the ALS method the global optimality is obtained for all the test instances. The approximation bounds calculated by these numerical instances are better than the theoretical approximation bounds shown in Section~\ref{sec:snorm}. In terms of the computational time by comparing with the method in~\cite{NW14}, Algorithm~\ref{alg:snorm} runs quicker for low dimensions but the time increases quickly when the dimension of the problem increases.

To systematically verify and compare with the theoretical approximation bounds obtained by our algorithms, we now test randomly generated tensors whose spectral and nuclear norms can be easily obtained. In particular, let
\begin{equation} \label{eq:orthdecomp}
\TT=\sum_{i=1}^r \lambda_i\, \bx_i\otimes \by_i\otimes \bz_i \mbox{ with } \lambda_i>0 \mbox{ and } \|\bx_i\|=\|\by_i\|=\|\bz_i\|=1 \mbox{ for }i=1,2,\dots,r,
\end{equation}
where $(\bx_i^{\T}\bx_j)(\by_i^{\T}\by_j)=\bz_i^{\T}\bz_j=0$ if $i\neq j$. This is a special type of orthogonally decomposable tensors. With the special structure of $\TT$ in~\eqref{eq:orthdecomp}, it is not difficulty to see that $\|\TT\|_\sigma=\max_{1\le i\le r}\lambda_i$ and $\|\TT\|_*=\sum_{i=1}^r\lambda_i$. The components of $\TT$ in~\eqref{eq:orthdecomp}, $\lambda_i$'s, $\bx_i$'s, $\by_i$'s and $\bz_i$'s, are generated from i.i.d.~standard normal distributions and made positive (by taking the absolute value) or orthogonal if necessary.

We apply Algorithm~\ref{alg:snorm} to approximate the spectral norm and Algorithm~\ref{alg:3nnorm} to approximate the nuclear norm for $n\times 10\times 10$ tensors and $10\times n\times n$ tensors, both with varying $n$. Instead of the deterministic hitting set $\BH_5$ used in the original algorithms, we replace it with a randomized hitting set $\BH_1$ that is numerically more stable and efficient. The results are shown in Tables~\ref{table:4} and~\ref{table:6} for the spectral norm and in Tables~\ref{table:10} and~\ref{table:12} for the nuclear norm. For each type of tensors with a fixed size, say $5\times 10\times 10$, we randomly generate 200 instances and find an approximate solution of the spectral norm by Algorithm~\ref{alg:snorm}, whose approximation bound is then computed since the optimal value is known. We then use the approximate solution as a starting point to apply the ALS method as a local improvement. As before, the corresponding indicator is appended with a `+' sign when a local improvement is involved. 
The same setting is implemented for the tensor nuclear norm by Algorithm~\ref{alg:3nnorm} except that (1) there is no local improvement method to improve our approximation solution and (2) we do not multiply $\tau$ to the output solution $\Y$ as $\tau$ involves an $\Omega$ but directly use $\langle \TT,\Y \rangle$ to obtain the bound (see the proof of Theorem~\ref{thm:3nnorm}) and so the bound is larger than one. In this scenario, the closer to one the better the bound.

\begin{table}[h]
\small
        \setlength{\tabcolsep}{5pt}
        \centering
        \begin{tabular}{|l|cccccc|}
            \hline
            $n$  & 5 & 10 & 20 & 30 & 40 & 50\\
            \hline
            $\sqrt{\ln n/n}$ & 0.5674 & 0.4799 & 0.3870 & 0.3367 & 0.3037 & 0.2797 \\
            \hline
            Min bound & 0.6317 & 0.6344 & 0.5751 & 0.5263 & 0.4663 & 0.4602  \\
            Min bound+ & 0.6921 & 0.6500 & 0.5847 & 0.5371 & 0.4768 & 0.8603  \\
            Max bound  & 0.9879 & 0.9611 & 0.8572 & 0.7653 & 0.7025 & 0.6579  \\
            Max bound+ & 1.0000 & 1.0000 & 1.0000 & 1.0000 & 1.0000 & 1.0000  \\
            Mean bound  & 0.8898 & 0.8219 & 0.6786 & 0.6459 & 0.5778 & 0.5278  \\
            Mean bound+ & 0.9895 & 0.9896 & 0.9859 & 0.9825 & 0.9758 & 0.9932 \\
            \% of optimality+  & 92.0\% & 92.0\% & 91.0\% & 87.5\% & 84.5\% & 89.0\%  \\
            Mean CPU+ & 0.02 & 0.23 & 0.78 & 6.84 & 12.47 & 18.39  \\
            \hline
        \end{tabular}
        \caption{Approximating spectral norm by Algorithm 3.3 (using $\BH^n_1$) for $n\times 10\times 10$ tensors}\label{table:4}
\end{table}


\begin{table}[h]
\small
        \setlength{\tabcolsep}{5pt}
        \centering
        \begin{tabular}{|l|cccccc|}
            \hline
            $n$  & 5 & 10 & 20 & 30 & 40 & 50\\
            \hline
            Min bound & 0.6574 & 0.5016 & 0.5094 & 0.6858 & 0.5133 & 0.5905  \\
            Min bound+ & 0.6746 & 0.5321 & 0.5109 & 0.7236 & 0.5261 & 0.6099  \\
            Max bound  & 0.9451 & 0.9453 & 0.9472 & 0.9375 & 0.9819 & 0.9620  \\
            Max bound+ & 1.0000 & 1.0000 & 1.0000 & 1.0000 & 1.0000 & 1.0000  \\
            Mean bound  & 0.8271 & 0.8292 & 0.8308 & 0.8280 & 0.8326 & 0.8295  \\
            Mean bound+ & 0.9899 & 0.9826 & 0.9893 & 0.9933 & 0.9814 & 0.9851 \\
            \% of optimality+ & 90.0\% & 85.5\% & 91.0\% & 93.5\% & 88.5\% & 90.0\% \\
            Mean CPU+ & 0.06 & 0.23 & 0.76 & 1.81 & 3.12 & 5.53  \\
            \hline
        \end{tabular}
        \caption{Approximating spectral norm by Algorithm 3.3 (using $\BH^{10}_1$) for $10\times n\times n$ tensors}\label{table:6}
\end{table}

\begin{table}[H]
\small
        \setlength{\tabcolsep}{5pt}
        \centering
        \begin{tabular}{|l|cccccc|}
            \hline
            $n$  & 5 & 10 & 20 & 30 & 40 & 50\\
            \hline
            $\sqrt{n/\ln n}$& 1.7626 & 2.0840 & 2.5838 & 2.9699 & 3.2929 & 3.5751 \\
            \hline
            Min bound & 1.1791 & 1.2521 & 1.7417 & 1.8815 & 2.1672 & 2.5568 \\
            Max bound & 1.4998 & 1.5263 & 2.0248 & 2.0187 & 2.2854 & 3.2108 \\
            Mean bound & 1.3078 & 1.4135 & 1.9055 & 1.9522 & 2.2221 & 2.9763 \\
            Mean CPU+  & 0.69 & 13.31 & 101.49 & 1957.03 & 5365.99 & 11609.46 \\
            \hline
        \end{tabular}
\caption{Approximating nuclear norm by Algorithm 3.9 (using $\BH^{n}_1$) for $n\times 10\times 10$ tensors}\label{table:10}
\end{table}


\begin{table}[h]
\small
        \setlength{\tabcolsep}{5pt}
        \centering
        \begin{tabular}{|l|cccccc|}
            \hline
            $n$  & 5 & 10 & 20 & 30 & 40 & 50 \\
            \hline
            Min bound & 1.2999 & 1.3110 & 1.3008 & 1.3225 & 1.3638 & 1.3511 \\
            Max bound & 1.5275 & 1.5303 & 1.5257 & 1.5405 & 1.5099 & 1.5726 \\
            Mean bound & 1.4120 & 1.4112 & 1.4148 & 1.4148 & 1.4200 & 1.4239 \\
            Mean CPU+  & 2.06 & 13.23 & 160.03 & 941.46 & 6618.63 & 9488.85 \\
            \hline
        \end{tabular}
\caption{Approximating nuclear norm by Algorithm 3.9 (using $\BH^{10}_1$) for $10\times n\times n$ tensors}\label{table:12}
\end{table}

From the above tables, we see that the exact approximation bounds obtained by numerical instances outperform the theoretical bound $\Omega\left({\sqrt\frac{\ln n}{n}}\right)$  for both the spectral and nuclear norms. For the latter, it obviously beats the previous known best one $\Omega\left(\frac{1}{\sqrt{n}}\right)$. For the spectral norm, running the ALS method starting with our approximate solutions can lead to global optimality for most random generated tensor instances.
}

\section{Concluding remarks}\label{sec:remark}

We constructed hitting sets or collections of spherical caps to cover the unit sphere with adjustable parameters for different levels of approximations and cardinalities, listed roughly in Table~\ref{table:list}. These readily available products can be used for various decision making problems on spheres or related problems. By applying the covering results we proposed easily implementable and deterministic algorithms to approximate the tensor spectral norm with the current known best approximation bound. The algorithms can be extended to provide approximate solutions for sphere constrained homogeneous polynomial optimization problems. Deterministic algorithms with an improved approximation bound for the tensor nuclear norm were proposed as well. This newly improved bound attains the best known one for the tensor spectral norm. 

For $1\le p\le \infty$, the tensor spectral $p$-norm~\cite{L05} generalizes the tensor spectral norm in which the unit sphere $\|\bx\|=1$ is replaced by the $L_p$-sphere $\|\bx\|_p=1$. The tensor nuclear $p$-norm can also be defined similarly~\cite{FL18}. Hou and So~\cite{HS14} studied related $L_p$-sphere constrained homogeneous polynomial optimization problems and proposed approximation bounds. It is natural to ask whether one can construct $L_p$-sphere coverings and apply them to approximate the tensor spectral and nuclear $p$-norms. The answer is likely true but still challenging. In fact, one can construct randomized hitting sets using similar ideas in Section~\ref{sec:h1} to show the $L_p$ version of Theorem~\ref{thm:h1} but deterministic constructions remain difficult.
Perhaps a more interesting problem is to explicitly construct hitting sets for the binary hypercube $\{1,-1\}^n$ with different levels of approximations and cardinalities. It will have wider applications, particularly in discrete optimization and graph theory. We leave these to future works.

\section*{Acknowledgments}

The research is partially supported by the National Natural Science Foundation of China (Grants 71771141, 72171141, 71825003, 72150001, 72192832 and 11831002) and Program for Innovative Research Team of Shanghai University of Finance and Economics. The authors would like to thank the anonymous referees for their insightful comments that helped to improve this paper from its original version.

\end{document}